\documentclass{article}

\usepackage{geometry}
\usepackage{amsmath,amssymb,comment,enumitem, array}
\usepackage{color}
\usepackage{parskip}
\usepackage{parskip}
\usepackage{setspace}
\usepackage{graphicx}
\usepackage{mathtools}
\usepackage{subfig}
\usepackage{comment}
\usepackage{mathabx}
\usepackage{mathrsfs}
\usepackage{setspace}
\usepackage{bm}
\usepackage[normalem]{ulem}
\usepackage{cancel}
\usepackage{csquotes}
\usepackage[backend=biber, maxbibnames=99]{biblatex}
\usepackage[colorlinks=true, allcolors=blue]{hyperref}
\hypersetup{colorlinks, citecolor=red, filecolor=black, linkcolor=blue, urlcolor=blue}
\usepackage{pgfplots}
\pgfplotsset{compat=1.18}
\usepackage{graphics}
\usepackage{graphicx}
\usepackage{tikz}
\usepackage{tcolorbox}
\usepackage{enumitem}
\usepackage{subfiles}
\usepackage{ytableau}
\usepackage{thmtools}
\usepackage{thm-restate}
\usepackage{amsthm}
\usepackage{multicol}
\usetikzlibrary{patterns}

\newcommand\sm{\setminus}

\newtheorem{thm}{Theorem}
\newtheorem{lemma}[thm]{Lemma}
\newtheorem{prop}[thm]{Proposition}
\newtheorem{claim}[thm]{Claim}

\newtheorem{prob}[thm]{Problem}
\newtheorem{cor}[thm]{Corollary}

\theoremstyle{definition}
\newtheorem{defn}[thm]{Definition}
\newtheorem{eg}[thm]{Example}

\def\john#1{\noindent
\textcolor{blue}
{\textsc{(John:}
\textsf{#1})}}

\title{The VC-dimension of strongly regular graphs}
\author{Isabel Byrne\footnote{Department of Mathematical Sciences, University of Delaware, Newark, DE 19716, USA} \and John Byrne\footnote{Department of Mathematics, University of California, San Diego, La Jolla, CA 92093, USA} \and Sebastian M. Cioab\u a\footnote{Department of Mathematical Sciences, University of Delaware, Newark, DE 19716, USA}}
\date{\today}

\begin{document}

\maketitle

\begin{abstract}
    A graph $G$ is $n$-existentially closed or $n$-\textit{e.c.} if, for all subsets $S\subseteq V(G)$ with $|S|=n$ and for all partitions $S=A\sqcup B$, there exists a vertex in $V(G)\sm S$ adjacent to all vertices in $A$ and no vertices in $B$. We study the minimum number of edges $m(v,n)$ of a $v$-vertex $n$-e.c. graph, and show that $m(v,2)=3v+O(1)$ while $m(v,n)=\Theta(v\log v)$ for fixed $n\ge 3$. The latter result uses a connection to binary covering arrays.

    A related parameter is the \textit{VC-dimension} of $G$, defined as the size of the largest subset of vertices shattered by the neighborhoods of vertices in $G$. We initiate systematic study of the VC-dimensions of strongly regular graphs (SRGs). We characterize the sufficiently large SRGs with VC-dimension 2. Furthermore, we determine the VC-dimension of sufficiently large Latin square graphs and of all SRGs of order at most 28, and we show that the SRGs with a given integer as smallest eigenvalue have bounded VC-dimension.
\end{abstract}

\section{Introduction}

In this paper we study two related graph parameters. A graph $G$ is said to be \textit{$n$-existentially closed} ($n$-e.c.) if, for all subsets $S\subseteq V(G)$ with $|S|=n$ and for all partitions $S=A\sqcup B$, there exists a vertex in $V(G)\sm S$ adjacent to all vertices in $A$ and no vertices in $B$. The \textit{VC-dimension} of $G$ is the maximum size of a subset $S\subseteq V(G)$ such that, for all partitions $S=A\sqcup B$, there exists a vertex in $V(G)$ adjacent to all vertices in $A$ and no vertices in $B$. Evidently, if $G$ is $n$-existentially closed, then the VC-dimension of $G$ is at least $n$. The converse is not true, and indeed there exist graphs with arbitrary VC-dimension that are not even 1-e.c..

The property of $n$-existential closure gives rise to a natural extremal problem: determine $m_{ec}(n)$, the minimum order of an $n$-e.c. graph. The only exact values known are $m_{ec}(1)=4$ and $m_{ec}(2)=9$ \cite{caccetta1985property}, while the best bounds for $n=3$ are $24\le m_{ec}(3)\le 28$ \cite{bonato2001adjacency,gordinowicz2010search}. For the asymptotics as $n\to\infty$, the best known upper bound is $m_{ec}(n)=O(n^22^n)$, from the random graph $G_{n,1/2}$ \cite{erdos1963asymmetric}. Caccetta, Erd\H{o}s, and Vijayan \cite{caccetta1985property} showed that $m_{ec}(n)=\Omega(n2^n)$ and conjectured that $\lim_{n\to\infty}m_{ec}(n)/(n2^n)$ exists.

The random graph $G_{n,1/2}$, as well as most explicit constructions of $n$-e.c. graphs (see the survey \cite{bonato2009search}), typically has an edge-density near $1/2$ and so is \textit{dense}. In this paper, we study \textit{sparse} $n$-e.c. graphs. Accordingly, we define $m(v,n)$ to be the minimum number of edges in an $n$-e.c. graph of order $v$, and study the growth of this function for $n$ fixed and $v\to\infty$. (Note this is equivalent to studying the \textit{maximum} number of edges in such a graph.) In this paper, we show that $m(v,2)=3v+O(1)$, while for $n\ge 3$, $m(v,n)=\Theta(v\log v)$. Additionally, the $n\ge 3$ case is closely related to the problem of constructing $n$-\textit{strong binary covering arrays.}

When considering VC-dimension instead of $n$-existential closure, these types of extremal problems are no longer interesting. For example, it is obvious that the minimum order of a graph with VC-dimension $n$ is $2^n$. Instead, our main goal here is the exact determination of the VC-dimension of various given graphs. 

In 1971, Vapnik and Chervonenkis \cite{vapnik1971uniform} introduced the notion of VC-dimension in the context of statistics. The VC-dimension of a set system is a measure of the complexity of a set system, and it has found applications in such areas as machine learning, logic, real algebraic geometry, and computational geometry (see e.g. \cite[Section 14.4]{alon2016probabilistic}).

There is a growing literature on the VC-dimension of graphs, motivated by several distinct applications. The series of papers \cite{fitzpatrick2024vc,iosevich2025vc,iosevich2023dot} concerns learning problems related to point configurations in both finite and real vector spaces. The set systems they consider consist of certain neighborhood sets in certain graphs, and this graph-theoretic perspective was pursued further in \cite{housholder2026vc,pham2025vc}. Combinatorial applications of graph VC-dimension were obtained in \cite{alon2007efficient,fox2020bounded,fox2019erdHos,janzer2024zarankiewicz,lovasz2010regularity,nguyen2025induced}. Of particular interest are graph classes with bounded VC-dimension; for such classes, it is often possible to prove bounds on some parameter which greatly improve what is known for general graphs. Algorithmic applications for these classes have also been obtained: see \cite{coudert2024practical} for an overview.

In this paper we study the VC-dimension of \textit{strongly regular graphs}, a class which includes cases of the aforementioned Hamming and Johnson graphs, as well as many other graphs of fundamental importance in combinatorics and algebra. Conceptually, the aim of this work is to observe how a \textit{regularity}/\textit{balance} condition affects a measure of the \textit{complexity} of a graph.

Several authors have obtained results about the VC-dimension of specific families of SRGs. Benediktsson \cite{benediktsson2021model} obtained results on the VC-dimension of Hamming graphs and Johnson graphs. In particular, the VC-dimension of a sufficiently large lattice graph is 3, and the VC-dimension of a sufficiently large triangular graph is 4. In \cite{mcdonald2025vc}, it was proved that the VC-dimension of the Paley graph of order $q$ is at least $(1/2+o(1))\log_2(q)$ and conjectured that the true value is $(1+o(1))\log_2(q)$.

Other bounds on the VC-dimension of particular SRGs follow from constructions of $n$-e.c. SRGs. Bonato \cite{bonato2009search} noted that ``most of the known explicit $n$-e.c. graphs are strongly regular." From \cite{kisielewicz2004pseudo} we have that the VC-dimension of the Peisert graph of order $q$ is at least $(1/8+o(1))\log_2(q)$. Cameron and Stark \cite{cameron2002prolific} gave a randomized construction of $v$-vertex SRGs based on affine designs, which have VC-dimension at least $(1/2+o(1))\log_2(v)$.

Every primitive SRG has VC-dimension at least 2. In this paper we characterize (in terms of their parameters $(v,k,\lambda,\mu)$), for sufficiently large order $v$, the SRGs which have VC-dimension exactly 2. Additionally, we determine the VC-dimension of sufficiently large Latin square graphs and of all SRGs of order at most 28. We also obtain bounds on the VC-dimension of other families of SRGs, namely orthogonal array graphs and Steiner graphs. When the number of rows in the array or the block size of the Steiner system is fixed, we show that these families have bounded VC-dimension. These two families are of particular interest, in view of the results of Neumaier and Sims (see \cite[Thm.8.6.3,Thm.8.6.4]{brouwer2022strongly} and \cite{koolen26}): for a fixed integer $m\ge 2$, all but finitely many primitive SRGs with smallest eigenvalue $-m$ are orthogonal array graphs or Steiner graphs. McKay and Pike proved that if a Steiner graph is $n$-e.c. then $n$ is bounded by the block size $k$ (\cite{mckay2007existentially}, Theorem 1). Our result on Steiner graphs strengthens this result by replacing $n$ with the VC-dimension, at the cost of a allowing a larger function of $k$.

\section{Notation}

Let $G$ be a graph. For $U\subseteq V(G)$, we write $G[U]$ for the subgraph induced by $U$; and for $x\in V(G)$, we use $N_U(x)$ for $N(x)\cap U$ and $d_U(x)$ for $|N_U(x)|$. We write $N_i(x)$ for $\{y\in V(G):d(x,y)=i\}.$ We also use $e(G)$ for the number of edges of $G$ and $e(U)$ for $e(G[U])$; for $W\subseteq V$, we write $e(U,W)$ for $|\{(u,w):u\in U,w\in W,u\sim w\}|.$ We denote by $P_k$ the path on $k$ \textit{vertices} and by $K_{n_1,\ldots,n_p}$ the complete multipartite graph with part size $n_1,\ldots,n_p$.

Some of our notation is nonstandard. Given a graph $G$ and $x_1,\ldots, x_i,y_1,\ldots,y_j\in V(G)$, write $N(x_1,\ldots, x_i,\overline{y_1},\ldots,\overline{y_j})$ for the set of vertices in $V\sm \{x_1,\ldots,x_i,y_1,\ldots,y_j\}$ which are adjacent to all of $x_1,\ldots, x_i$ and none of $y_1,\ldots, y_j$; and $N'(x_1,\ldots,x_i,\overline{y_1},\ldots,\overline{y_j})$ for the set of vertices in $V$ which are adjacent to all of $x_1,\ldots,x_i$ and none of $y_1,\ldots,y_j$. We write $d(x_1,\ldots,x_i,\overline{y_1},\ldots,\overline{y_j})$ and $d'(x_1,\ldots,x_i,\overline{y_1},\ldots,\overline{y_j})$ for the size of each respective set. This notation may be abbreviated as $N(X,\overline Y)$, etc. when $X=\{x_1,\ldots,x_i\}$ and $Y=\{y_1,\ldots,y_j\}$. Using this notation, the graph properties we study can be defined succinctly as follows:
\begin{enumerate}
    \item $G$ is $n$-e.c. if and only if $N(x_1,x_2,\ldots,x_i,\overline{x_{i+1}},\overline{x_{i+2}},\ldots,\overline{x_n})\ne\emptyset$ for all distinct $x_1,\ldots,x_n\in V$.
    \item The VC-dimension of $G$ is the maximum size of a set $S\subseteq V$ such that $N'(A,\overline{S\sm A})\ne\emptyset$ for all $A\subseteq S$.
\end{enumerate}

Whenever we make a statement such as ``choose $x_1,x_2\in V$," we implicitly assume that $x_1,x_2$ are distinct. When studying $n$-existential closure and VC-dimension, we believe it is sufficiently clear from context when vertices are assumed to be distinct, that it is worth omitting this condition to save space.

\section{n-e.c. graphs}

Define $m(v,n)$ to be the minimum size of a $v$-vertex $n$-e.c. graph. Our goal is to study the asymptotics of $m(v,n)$ for fixed $n$ and $v\to\infty$. Note that, since $G$ is $n$-e.c. if and only if its complement $\overline{G}$ is $n$-e.c., this is equivalent to studying the \textit{maximum} number of edges in an $n$-e.c. graph. The following lemma gives a lower bound on $m(v,n)$ which is linear in $v$.

\begin{lemma} \label{Lemma minimum degree}
    Suppose that $G$ is $n$-e.c. for some $n\ge 2$. Then $\delta(G)\ge m_{ec}(n-1).$
\end{lemma}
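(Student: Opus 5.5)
The natural approach is to show that for every vertex $x$, the induced subgraph $G[N(x)]$ is itself $(n-1)$-e.c. Since an $(n-1)$-e.c. graph has at least $m_{ec}(n-1)$ vertices by definition of $m_{ec}$, this gives $d(x)=|N(x)|\ge m_{ec}(n-1)$ for every $x$, and hence $\delta(G)\ge m_{ec}(n-1)$.

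To verify the claim, fix $x\in V(G)$ and write $H=G[N(x)]$. Take any set $S\subseteq N(x)$ with $|S|=n-1$ and any partition $S=A\sqcup B$. Then $x\notin S$, so $S\cup\{x\}$ is a set of $n$ distinct vertices of $G$. Partition it as $(A\cup\{x\})\sqcup B$ and apply the $n$-e.c. property of $G$. This produces a vertex $z\in V(G)\sm(S\cup\{x\})$ adjacent to all of $A\cup\{x\}$ and to none of $B$. Because $z\sim x$, we have $z\in N(x)=V(H)$, and $z\notin S$. Adjacency in $H$ agrees with adjacency in $G$ since $H$ is induced, so $z$ is a vertex of $V(H)\sm S$ adjacent to all of $A$ and none of $B$ within $H$. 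Hence $H$ is $(n-1)$-e.c.

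The only point requiring care is that $H$ genuinely has at least $n-1$ vertices, so that the $(n-1)$-e.c. condition is not vacuous and $m_{ec}(n-1)$ really bounds $|V(H)|$. This follows directly from $G$ being $n$-e.c. with $n\ge 2$. Choose any $n-1$ vertices other than $x$ and put all of them together with $x$ on the adjacent side. The $n$-e.c. property then gives a vertex $z\notin S\cup\{x\}$ adjacent to all of them. So $x$ has at least $n$ neighbors, namely the $n-1$ chosen vertices and $z$. (This uses $|V(G)|\ge n$, which holds for any $n$-e.c. graph under the convention that the definition concerns sets of size $n$.) With that settled, the bound follows immediately, and there is no real obstacle beyond this bookkeeping.
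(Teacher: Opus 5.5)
Your main argument is correct and is exactly the fact the paper uses. Applying the $n$-e.c. property to $S\cup\{x\}$, with $x$ placed on the adjacent side, shows that $G[N(x)]$ is $(n-1)$-e.c.; the paper's one-line proof simply cites Bonato's survey for this. You are also right that non-vacuity has to be checked, since $m_{ec}(1)=4$ only makes sense if graphs with too few vertices are excluded.

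The gap is in how you check it. Applying the $n$-e.c. property to $S\cup\{x\}$ with every vertex on the adjacent side gives a vertex $z$ adjacent to $x$ and to each vertex of $S$. It says nothing about whether the vertices of $S$ are adjacent to $x$. So ``$x$ has at least $n$ neighbors, namely the $n-1$ chosen vertices and $z$'' does not follow; as written, the argument only gives $d(x)\ge 1$. Choosing the $n-1$ vertices inside $N(x)$ does not help either, because that already presupposes $|N(x)|\ge n-1$.

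Fix this by iterating. Suppose you have distinct neighbors $z_1,\dots,z_j$ of $x$ with $0\le j\le n-1$. Extend $\{x,z_1,\dots,z_j\}$ to an $n$-set $S'$, which is possible since $|V(G)|\ge n$. Apply the $n$-e.c. property to any partition of $S'$ with $x$ on the adjacent side. This gives a vertex $z_{j+1}\notin S'$ with $z_{j+1}\sim x$, and it is new because $z_1,\dots,z_j\in S'$. Running this from $j=0$ to $j=n-1$ yields $d(x)\ge n$. Hence $G[N(x)]$ has at least $n-1$ vertices, its $(n-1)$-e.c. condition is non-vacuous, and the bound $d(x)\ge m_{ec}(n-1)$ follows. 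With this replacement your proof is complete, and it is more careful than the paper's, which does not address this point.
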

\begin{proof}
    Let $x\in V(G)$. Using \cite{bonato2009search}, $G[N(x)]$ is $(n-1)$-e.c.. Thus, $d(x)=|N(x)|\ge m_{ec}(n-1)$.
\end{proof}

\subsection{The case $n=1$}

\begin{thm}
    We have $m(v,1)=\lceil v/2\rceil$ for every $v\ge 4$.
\end{thm}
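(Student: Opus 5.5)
The argument has two parts: a lower bound from the $1$-e.c.\ condition with $A=\{x\}$, and a matching construction built from disjoint stars and paths.

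\textbf{Lower bound.} Let $G$ be $1$-e.c.\ on $v$ vertices. Taking $S=\{x\}$ and $A=\{x\}$, some vertex other than $x$ must be adjacent to $x$, so $\delta(G)\ge 1$. Hence $G$ has no isolated vertices and $e(G)\ge \sum_x d(x)/2\ge v/2$, which gives $m(v,1)\ge\lceil v/2\rceil$. The other half of the condition, $A=\emptyset$, asks for a vertex outside $S$ nonadjacent to $x$. This only rules out dominating vertices and costs nothing in the count.

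\textbf{Upper bound.} We need a $1$-e.c.\ graph on $v$ vertices with exactly $\lceil v/2\rceil$ edges. For even $v\ge 4$, take the perfect matching $\frac v2 K_2$. Every vertex has a neighbor, and every vertex $x$ is nonadjacent to some vertex in another matching edge, which exists because $v\ge 4$. This graph has $v/2$ edges.

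For odd $v\ge 5$, take $P_3\cup\frac{v-3}{2}K_2$, which has $2+\frac{v-3}{2}=\frac{v+1}{2}=\lceil v/2\rceil$ edges. Each vertex has a neighbor. Each vertex also has a nonneighbor: every vertex of the $P_3$ misses the vertices of the $K_2$ components, and every vertex in a $K_2$ misses the $P_3$. At least one $K_2$ component exists because $v\ge 5$.

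\textbf{Main obstacle.} The delicate point is the boundary case and the parity of $v$. For odd $v$ we cannot use a perfect matching, so one component must be a $P_3$ (or a star $K_{1,2}$), and we have to check that its center still has a nonneighbor. That check is where $v\ge 4$ is used: for $v\le 3$ the only candidate graphs have a dominating vertex or an isolated vertex, which is consistent with $m_{ec}(1)=4$. Beyond this, the verification is routine.
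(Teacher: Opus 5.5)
Your proposal is correct and follows the paper's proof: the lower bound comes from every vertex needing a neighbor, which forces $e(G)\ge\lceil v/2\rceil$, and the upper bound uses a perfect matching for even $v$ and $P_3$ plus a matching on the remaining vertices for odd $v$. You also check the nonadjacency half of the $1$-e.c.\ condition, which is where $v\ge 4$ enters and which the paper leaves implicit.
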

\begin{proof}
    We first prove the upper bound. If $v\ge 4$ is even, then a matching on $v$ vertices is 1-e.c., so $m(v,1)\ge v/2=\lceil v/2\rceil.$ If $v$ is odd, then note that the union of $P_3$ with a matching on the remaining vertices is 1-e.c., hence, $m(v,1)\ge 2+(v-3)/2=\lceil v/2\rceil.$
    
    We now prove the lower bound. If $G$ is 1-e.c., then every vertex is contained in an edge. By the union bound, this requires at least $v/2$ edges, so $e(G)\ge\lceil v/2\rceil$.
\end{proof}

\subsection{The case $n=2$}

In this section we prove the following.

\begin{thm} \label{Theorem m(v,2)}
    We have $3v-5625\le m(v,2)\le 3v+25$ for all $v\ge 3201$.
\end{thm}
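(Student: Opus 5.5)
The proof has two independent halves: an explicit construction for the upper bound, and a counting/discharging argument for the lower bound.

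\textbf{Upper bound.} Since $\delta(G)\ge m_{ec}(1)=4$ by Lemma~\ref{Lemma minimum degree}, any construction must use vertices of degree close to $6$ on average. I would use a small fixed ``hub'' gadget $H$ of bounded size, say three hub vertices $a,b,c$ together with a constant number of auxiliary vertices, and partition the remaining $v-O(1)$ vertices into three classes $V_{ab},V_{bc},V_{ca}$ of nearly equal size. A vertex of $V_{ab}$ is joined to the hubs $a$ and $b$ only, and similarly for the other classes. Inside $V\sm H$, I would place a disjoint union of long cycles, with consecutive vertices chosen in different classes, so that each non-hub vertex has degree exactly $4$. Hub edges contribute about $2v$ and cycle edges about $v$, so $e(G)=3v+O(1)$; tuning $H$ should give the constant $25$.

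I would then verify the $2$-e.c.\ condition for every pair $\{x,y\}$ and every one of the four patterns, split by type:
\begin{enumerate}
\item For two non-hub vertices, a common neighbour and a common non-neighbour come from the hubs, using the fact that two classes always share a hub and miss a hub. A vertex adjacent to $x$ but not $y$ comes from a cycle neighbour of $x$ that is not adjacent to $y$; the cycle length at least $5$ handles the case where $x$ and $y$ are close on a cycle.
\item Pairs involving a hub are handled by the cycle structure and the class sizes.
\item Pairs inside $H$ are handled by designing the auxiliary vertices of $H$ directly.
\end{enumerate}
If some case fails, for example if the vertex adjacent to $x,y$ and also required to avoid another vertex forces something impossible, I would enlarge $H$ slightly. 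The requirement $v\ge 3201$ leaves room for the cycles to be long and for the classes to be balanced.

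\textbf{Lower bound.} It suffices to show $\sum_x (d(x)-6)\ge -C$ with $C=2\cdot 5625$. Call $x$ \emph{light} if $d(x)\le 5$. By Lemma~\ref{Lemma minimum degree} and the proof idea behind it, $G[N(x)]$ is $1$-e.c., so a light $x$ has degree $4$ or $5$ and $N(x)$ induces one of the few $1$-e.c.\ graphs on $4$ or $5$ vertices, such as $2K_2$, $P_4$, $P_3\cup K_2$, $C_5$, and so on.

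The key step is to show that light vertices are very restricted. For two light vertices $x,y$, the $2$-e.c.\ condition demands a vertex in $N(x)\cap N(y)$ and a vertex in $N(x)\sm N(y)$. Fixing $x$, this means every other light vertex $y$ must meet the $\le 5$ vertices of $N(x)$ in a nonempty proper way. I would then show that a vertex $z$ adjacent to many light vertices is ``expensive'': its own degree is large and it lies in the neighbourhoods of many light vertices. The plan is a discharging argument in which each light vertex receives charge from its neighbours of large degree, with the number of light vertices that cannot be paid for bounded by about $75$. Squaring this bound through a pairwise argument over pairs of such light vertices would give the constant $5625=75^2$.

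\textbf{Main obstacle.} I expect the discharging step in the lower bound to be the hardest part, specifically proving that high-degree vertices have enough surplus to pay for all but boundedly many light vertices. I would first try charging $1/2$ from each neighbour $z$ with $d(z)\ge 7$ to each light neighbour. I would then use $2$-e.c.\ applied to pairs $(x,z)$, with the pattern adjacent to $z$ and not to $x$, to show that such a $z$ cannot have too many light neighbours relative to its degree.
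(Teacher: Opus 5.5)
Your upper bound follows the paper's graph $D_\ell$: three independent hubs, each non-hub joined to exactly two of them, and a $2$-regular graph on the non-hubs, giving $2v+v+O(1)$ edges. However, the rule ``consecutive vertices in different classes'' is too weak. Suppose $x\in V_{ab}$ has both cycle neighbours in $V_{bc}$. Then no vertex of $N(x)=\{a,b\}\cup(\text{two vertices of }V_{bc})$ is adjacent to $a$, so $N(x,a)=\emptyset$. Enlarging $H$ cannot fix this, because $N(x)$ is already determined. Either of the following works, in both cases with cycles of length at least $5$:
the two cycle neighbours of each vertex lie in the two \emph{different} other classes (for example the colouring $1,2,3,1,2,3,\dots$ on cycles of length divisible by $3$);
or the paper's rule, one cycle neighbour in the vertex's own class and one in another class.
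Two smaller points. Two vertices in different classes have no common non-neighbour among the hubs, so that pattern must be realised by a non-hub vertex. Residues of $v$ not reached by your cycle lengths still need an argument; the paper uses edge replication and a $C_5$ joined to the hubs. All of this is repairable.

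The lower bound is where the genuine gap lies. You give a plan rather than an argument, and both concrete ingredients you name fail on the extremal graph $D_\ell$ itself. There every non-hub vertex has degree $4$, so it needs charge $2$. It has only two neighbours of degree at least $7$, so your rule of $\tfrac12$ per high-degree neighbour pays it only $1$. Also, every neighbour of a hub $u_j$ has degree $4$, so the lemma that a high-degree $z$ ``cannot have too many light neighbours relative to its degree'' is false. The bound is sharp in the leading term, so the surplus of a few hubs of degree $\Theta(v)$ must cover the deficit of almost every vertex with total loss $O(1)$. No local charging rule can certify this until you show that $O(1)$ vertices dominate all but $O(1)$ vertices; that structural step is what is missing. (In the paper, $5625=3\cdot1875$ with $1875=43^2+26$; it does not come from squaring a count.)

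The paper obtains this domination as follows.
Let $L=\{x:d(x)<43\}$ and $H=V\setminus L$, so $|H|\le 2e/43\le 0.14v+2$.
If every $\ell\in L$ had $d_H(\ell)\ge 3$, then $e\ge\sum_{\ell\in L}\bigl(d_H(\ell)+\tfrac12 d_L(\ell)\bigr)\ge 3.5|L|>3v+25$ for $v\ge 3201$, contradicting the upper bound.
So some $\ell_1\in L$ has two neighbours $h_1,h_2$ such that all its other neighbours have degree below $43$.
Now apply your own (correct) observation that every vertex $x$ has a neighbour in $N(\ell_1)$, since $N(x,\ell_1)\ne\emptyset$. Any $x$ adjacent to neither $h_1$ nor $h_2$ must be adjacent to a low-degree neighbour of $\ell_1$, so all but at most $43^2$ vertices lie in $N(h_1)\cup N(h_2)$.
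Summing $d_Y(x)+\tfrac12 d_{V\setminus Y}(x)$ over $x\notin Y=\{h_1,h_2\}$ then gives at least $3$ per vertex. The only exceptions are degree-$4$ vertices adjacent to only one of $h_1,h_2$.
These are handled in two ways. If $|N(h_2,\overline{h_1})|\le 26$, the paper counts relative to $N(h_1)$ instead: each $x\in N(h_1)$ has two neighbours outside it, namely $h_1$ and a vertex of $N(x,\overline{h_1})$.
Otherwise it adds to $Y$ the sets $N_i=N(x_i)\setminus\{h_1\}$ for suitable degree-$4$ vertices $x_i\in N(h_1,\overline{h_2})$. Every vertex of $N(h_2,\overline{h_1})$ needs a common neighbour with $x_i$ other than $h_1$, so it has a neighbour in every $N_i$, and these edges pay for the vertices moved into $Y$.
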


We did not optimize the constants 5625 and 3201. We will first prove the upper bound, then the lower bound.

\subsubsection{Upper bound}

The following construction covers orders of the form $v=6k+3$.

\begin{defn} \label{Construction Dl}
    Let $\ell\ge 12$ be an integer such that $6|\ell$. We define a graph $D_\ell$ on $v=\ell+3$ vertices are follows. The vertex set is $V=\{0,\ldots,\ell-1\}\cup\{u_1,u_2,u_3\}$. Let $L_1=\{0,\ldots,\ell/3-1\}$, $L_2=\{\ell/3,\ldots,2\ell/3-1\}$, $L_3=\{2\ell/3,\ldots,\ell-1\}$ and $L=\{1,\ldots,\ell-1\}$. Set $U=\{u_1,u_2,u_3\}$. The adjacencies are defined as follows:
    \begin{itemize}
        \item for all $m\in L$, if $m$ is even then $m\sim m+1$;
        \item for all $m\in L_1\cup L_2$, if $m$ is odd then $m\sim m+\ell/3-1$;
        \item for all $m\in L_3$, if $m$ if odd then $m\sim m-2\ell/3-1$.
        \item for each $i=1,2,3$, for all $m\in L_i$, $m\sim u_j$ for $j\ne i$.
    \end{itemize}
\end{defn}

\begin{figure}
\begin{center}
\begin{tikzpicture}[scale=1]
    \draw (2.5,0)--(1,2)--(3.5,0);
    \draw (4.5,0)--(1,2)--(5.5,0);
    \draw (2.5,0)--(7,2)--(3.5,0);
    \draw (4.5,0)--(7,2)--(5.5,0);
    \draw (7.5,0)--(1,2)--(8.5,0);
    \draw (9.5,0)--(1,2)--(10.5,0);
    \draw (7.5,0)--(4,2)--(8.5,0);
    \draw (9.5,0)--(4,2)--(10.5,0);
    \draw (-2.5,0)--(4,2)--(-1.5,0);
    \draw (-0.5,0)--(4,2)--(0.5,0);
    \draw (-2.5,0)--(7,2)--(-1.5,0);
    \draw (-0.5,0)--(7,2)--(0.5,0);
    \draw (-2.5,0)--(-1.5,0);
    \draw (-0.5,0)--(0.5,0);
    \draw (2.5,0)--(3.5,0);
    \draw (4.5,0)--(5.5,0);
    \draw (7.5,0)--(8.5,0);
    \draw (9.5,0)--(10.5,0);
    \draw (-1.5,0) .. controls (0.5,-1) .. (2.5,0);
    \draw (0.5,0) .. controls (2.5,-1) .. (4.5,0);
    \draw (3.5,0) .. controls (5.5,-1) .. (7.5,0);
    \draw (5.5,0) .. controls (7.5,-1) .. (9.5,0);
    \draw (-2.5,0) .. controls (3,-2.3) .. (8.5,0);
    \draw (-0.5,0) .. controls (5,-2.3) .. (10.5,0);
    
    \draw[fill=black] (1,2) circle (2pt) node[above] {$u_1$};
    \draw[fill=black] (4,2) circle (2pt) node[above] {$u_2$};
    \draw[fill=black] (7,2) circle (2pt) node[above] {$u_3$};
    \draw[fill=black] (3.5,0) circle (2pt) node[below] {5};
    \draw[fill=black] (2.5,0) circle (2pt) node[below] {4};
    \draw[fill=black] (4.5,0) circle (2pt) node[below] {6};
    \draw[fill=black] (5.5,0) circle (2pt) node[below] {7};
    \draw[fill=black] (0.5,0) circle (2pt) node[below] {3};
    \draw[fill=black] (-0.5,0) circle (2pt) node[below] {2};
    \draw[fill=black] (-1.5,0) circle (2pt) node[below] {1};
    \draw[fill=black] (-2.5,0) circle (2pt) node[below] {0};
    \draw[fill=black] (7.5,0) circle (2pt) node[below] {8};
    \draw[fill=black] (8.5,0) circle (2pt) node[below] {9};
    \draw[fill=black] (9.5,0) circle (2pt) node[below] {10};
    \draw[fill=black] (10.5,0) circle (2pt) node[below] {11};
    
\end{tikzpicture}
\caption{The graph $D_\ell$ with $\ell=12$.} \label{Figure Dl}
\end{center}
\end{figure}

We observe the following properties of $D_\ell$:
\begin{itemize}
    \item[(1)] The set $L$ induces $\ell/6$ disjoint cycles of length $6$.
    \item[(2)] For each vertex $m\in L_i$, $m$ is adjacent to one vertex in $L_i$ and one vertex in $L_j$ for some $j\ne i$. 
    \item[(3)] $e(D_\ell)=3\ell$.
\end{itemize} 

\begin{prop}
    The graph $D_\ell$ is 2-e.c.
\end{prop}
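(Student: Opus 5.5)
The plan is to verify the definition of $2$-e.c. directly. For every pair of distinct vertices $x,y$ I will exhibit a vertex outside $\{x,y\}$ in each of $N(x,y)$, $N(x,\overline y)$, $N(\overline x,y)$ and $N(\overline x,\overline y)$. I will use the observed properties (1)--(3) of $D_\ell$ together with four structural facts read off from Definition~\ref{Construction Dl}:
\begin{itemize}
    \item $U$ is independent.
    \item $u_j$ is adjacent to exactly $L_i$ for $i\ne j$.
    \item Every vertex of $L_i$ is adjacent to exactly the two vertices $u_j$, $j\ne i$.
    \item Every vertex of $L$ has exactly two neighbours in $L$: its cycle neighbours from (1), namely one in its own part and one in another part by (2).
\end{itemize}
Since $\ell\ge 12$, each $L_i$ has at least $4$ vertices and $L$ contains at least two disjoint $6$-cycles. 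I will split into three cases according to how many of $x,y$ lie in $U$.

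\emph{Case 1: $x=u_i$, $y=u_j$.} Let $k$ be the third index. Any vertex of $L_k$ is a common neighbour, and any vertex of $L_j$ is adjacent to $u_i$ but not to $u_j$; symmetrically, $L_i$ gives the third pattern. The vertex $u_k$ is adjacent to neither, because $U$ is independent.

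\emph{Case 2: $x=u_i$, $y=m\in L$.} The two $L$-neighbours of $m$ lie in two different parts, so at least one of them is not in $L_i$ and is therefore adjacent to $u_i$; this gives a common neighbour. The vertex $m$ is adjacent to two of the three $u$'s, so one of them, $u_j\ne u_i$, lies in $N(\overline{u_i},m)$. The vertex $u_i$ has at least $2\ell/3\ge 8$ neighbours while $m$ has only four, so $N(u_i,\overline m)\ne\emptyset$. Finally, a vertex of $L_i$ is non-adjacent to $u_i$, and $|L_i|\ge 4$ while $m$ has at most two neighbours in $L_i$, so some vertex of $L_i\setminus\{m\}$ is adjacent to neither.

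\emph{Case 3: $x=m$, $y=m'$ both in $L$.} For a common neighbour, note that each of $m,m'$ is adjacent to two of the three $u$'s, so they share at least one $u$. For $N(m,\overline{m'})$ there are two subcases. If $m\in L_i$ and $m'\in L_j$ with $i\ne j$, then $u_j$ works. If $m,m'$ lie in the same part, I will use a cycle neighbour of $m$ other than $m'$ that is not adjacent to $m'$. This exists because a $6$-cycle is triangle-free and two of its vertices share at most one common neighbour, so at most one of $m$'s two cycle neighbours is excluded. The same argument handles $N(\overline m,m')$ by symmetry. For $N(\overline m,\overline{m'})$, if $m,m'$ both lie in the same $L_k$, then $u_k$ works. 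Otherwise I will take a vertex of a $6$-cycle containing neither $m$ nor $m'$; it exists because there are at least two cycles. If $m,m'$ occupy two different cycles, I will instead pick a vertex at distance $\ge 2$ from both inside either cycle; every $6$-cycle has such a vertex, since the closed neighbourhood of a single vertex has size $3<6$.

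\emph{Main obstacle.} The step I expect to need the most care is confirming property (1) precisely from the index arithmetic in Definition~\ref{Construction Dl}. This means checking that the edges $m\sim m+1$ and the "shift" edges really close up into disjoint $6$-cycles, each meeting every $L_i$ in a matched pair, and that vertex $0$ is handled correctly. Once the cycle structure is pinned down, including triangle-freeness and at most one common neighbour within a cycle, all the remaining checks above are routine.
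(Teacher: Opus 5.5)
Your proposal is correct and follows essentially the same route as the paper: a direct check split by how many of $x,y$ lie in $U$, with the same witnesses in the $U$--$U$ case, property (2) plus a spare $u_j$ in the mixed case, and explicit $6$-cycle arguments in the $L$--$L$ case, where the paper only says the witnesses are easy to find. On the step you flagged, property (1) does hold once $L$ is read as $\{0,\ldots,\ell-1\}$ (as the figure shows; the stated $\{1,\ldots,\ell-1\}$ is a typo), and the cycles are $2a,\,2a+1,\,2a+\ell/3,\,2a+\ell/3+1,\,2a+2\ell/3,\,2a+2\ell/3+1$ for $0\le a<\ell/6$.
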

\begin{proof}
    Let $x,y$ be distinct vertices of $D_\ell$.\\
    \textit{Case 1:} $x,y\in L$. Assume that $x\in L_i$ and $y\in L_j$. There exists $k\in\{1,2,3\}\sm\{i,j\}$. Then $u_k\in N(x, y)$. Since $L$ induces $\ell/6$ disjoint cycles of length 6, it is easy to find vertices in $N(x,\overline y)$, $N(\overline x,y)$ and $N(\overline x,\overline y)$ which are in $L$.\\
    \textit{Case 2:} $x\in L$, $y\in U$ (or vice versa). Let $x\in L_i$ and $y=u_j$. Since $d(x)=4$ and $d(y)=2\ell/3\ge 8$, it is obvious that $N(\overline x,y)\ne\emptyset$ and $N(\overline x,\overline y)\ne\emptyset$. Moreover, if $k$ is chosen such that $k\in\{1,2,3\}\sm\{i,j\}$, then  $u_k\in N(x,\overline y)$. So, it remains to show $N(x,y)\ne\emptyset$. If $j=i$, then by Property (2) there exists a vertex $z\in N(x)\cap L_{m}$ for some $m\ne i$. Then $z\in N(x,y)$. If $j\ne i$, then by Property (2) there exists a vertex $z\in N(x)\cap L_i$. Then $z\in N(x,y)$.\\
    \textit{Case 3:} $x,y\in U$. Let $x=u_i$ and $y=u_j$. If $k$ is chosen such that $k\in\{1,2,3\}\sm\{i,j\}$, then any vertex $z\in L_k$ satisfies $z\in N(x,y)$, and the vertex $u_k$ satisfies $u_k\in N(\overline x,\overline y)$. Moreover, any vertex $z\in L_j$ satisfies $z\in N(x,\overline y)$ and any vertex $z\in L_i$ satisfies $z\in N(\overline x,y)$. 
\end{proof}

\begin{cor} \label{Corollary 6k+3}
    If $\ell\ge 12$ is a multiple of 6, then with $v=\ell+3$, we have $m(v,2)\le 3v-9$.
\end{cor}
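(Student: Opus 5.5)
The corollary follows by taking $G=D_\ell$ as a witness in the definition of $m(v,2)$. By the preceding Proposition, $D_\ell$ is a $2$-e.c. graph on $v=\ell+3$ vertices. By the definition of $m(v,2)$ as the minimum number of edges of a $v$-vertex $2$-e.c. graph, we get $m(v,2)\le e(D_\ell)$. It then remains only to compute $e(D_\ell)$ and express it in terms of $v$.

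For the edge count I would use Property (3), $e(D_\ell)=3\ell$, which gives
\[
m(v,2)\le 3\ell = 3(v-3)=3v-9.
\]
To make sure Property (3) is reliable, I would verify it by summing degrees.

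\emph{Vertices of $L$.} By Property (1), $L$ induces $\ell/6$ disjoint $6$-cycles, so $G[L]$ has $\ell$ edges. I read this as the $\ell$ vertices $0,\ldots,\ell-1$, each lying on a $6$-cycle; the definition writes $L=\{1,\ldots,\ell-1\}$, which looks like a small typo.

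\emph{Vertices of $U$.} Each $u_j$ is adjacent to every vertex of $L_i$ for each $i\ne j$. Since $|L_i|=\ell/3$, this gives $2\ell/3$ neighbors, all in $L$, so $d(u_j)=2\ell/3$. The vertices of $U$ are pairwise nonadjacent.

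\emph{Total.} The edges between $U$ and $L$ number $3\cdot 2\ell/3=2\ell$. Adding the $\ell$ edges inside $L$ gives $e(D_\ell)=3\ell$, as claimed.

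The only real obstacle is making sure the cycle structure in Property (1) actually holds. That is, with the matching edges $m\sim m+1$ ($m$ even) and the cross edges, each vertex of $L$ should get exactly two neighbors in $L$, giving degree $4$ in $D_\ell$. This agrees with the value $d(x)=4$ used in the Proposition's proof.

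I would check this by tracing one orbit. Take a block $\{0,1\}\subseteq L_1$ and follow the edges: $1\to 1+\ell/3$, then its matched partner $\ell/3$, and so on into $L_3$. The path returns to its start after visiting two vertices in each $L_i$, so it closes as a $6$-cycle. The degree count $\sum d = 4\ell + 3\cdot(2\ell/3) = 6\ell$ is then consistent with $e(D_\ell)=3\ell$.
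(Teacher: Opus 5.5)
Your argument is correct and is exactly the paper's: $D_\ell$ is a $2$-e.c. graph on $v=\ell+3$ vertices by the preceding Proposition, and Property (3) gives $e(D_\ell)=3\ell=3v-9$. Your degree count $4\ell+3\cdot(2\ell/3)=6\ell$ is a valid check, and you are right that $L$ should be $\{0,\ldots,\ell-1\}$.

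One small slip in your trace: vertex $1$ is joined to $1+\ell/3-1=\ell/3$, not to $1+\ell/3$. The $6$-cycle is therefore $0,1,\ell/3,\ell/3+1,2\ell/3,2\ell/3+1$, which does not change your conclusion.
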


We have proved the upper bound of \autoref{Theorem m(v,2)} for the case where $v\equiv 3\pmod 6$; now we extend the bound to all large enough $v$ by making small modifications to the graph $D_\ell$. We will use the operation of \textit{edge replication} defined in \cite{bonato2001adjacency} as follows. For a graph $G$ and an edge $e=xy\in E(G)$, let $R(G,e)$ be the graph with vertices $V(G)\sqcup\{x',y'\}$ and edges $E(G)\cup\{x'y'\}\cup\{x'z:xz\in E(G)\text{ and }z\ne y\}\cup\{y'z:yz\in E(G)\text{ and }z\ne x\}.$ If $G$ is 2-e.c. and $e\in E(G)$, then $R(G,e)$ is also 2-e.c. \cite{bonato2001adjacency}.

\begin{cor} \label{Corollary Odd v}
    If $v\ge 15$ is odd then $m(v,2)\le 3v+5.$
\end{cor}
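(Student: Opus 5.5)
The plan is to start from the graphs $D_\ell$ of Corollary~\ref{Corollary 6k+3} and raise the order by applying edge replication to them, since each replication preserves 2-e.c.\ and adds exactly two vertices. Every odd $v\ge 15$ has the form $v=\ell+3+2r$ with $6\mid\ell$, $\ell\ge 12$, and $r\in\{0,1,2\}$. Indeed, $v-3$ is even, and $v-3-2r$ runs through the three even residues modulo $6$ as $r$ ranges over $\{0,1,2\}$. Since $v-3\ge 12$, we may choose the $r$ with $6\mid v-3-2r$ and set $\ell=v-3-2r$. If $\ell$ comes out smaller than $12$, we instead use $\ell-6$ and $r+3$, or adjust the lower bound in some other way. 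The threshold $v\ge 15$ should guarantee that $\ell\ge 12$ can always be achieved with $r\le 2$: for $v=15,17,19$ we get $\ell=12$ with $r=0,1,2$ respectively. So we take $G_0=D_\ell$ and obtain $G_{i+1}=R(G_i,e_i)$ for a suitable edge $e_i$, for $i<r$. By the cited result of \cite{bonato2001adjacency}, $G_r$ is 2-e.c.\ and has $v$ vertices.

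The remaining step is to count edges. Replicating the edge $e=xy$ adds the edge $x'y'$, together with $d(x)-1$ edges at $x'$ and $d(y)-1$ edges at $y'$. The total added is therefore $d(x)+d(y)-1$. To keep this small we replicate an edge with both ends in $L$. In $D_\ell$ every vertex of $L$ has degree $4$: it has two cycle neighbours and two neighbours in $U$. Hence the first replication adds $7$ edges.

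After this step, $x'$ and $y'$ also have degree $4$, but the degrees of the neighbours of $x$ and $y$ increase. For the second replication we therefore choose an edge $e_1$ in $L$ that lies in a different $6$-cycle. Both of its ends then still have degree $4$ in $G_1$, apart from their neighbours in $U$. Those neighbours are not endpoints of $e_1$, so the count is again $4+4-1=7$.

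This gives $e(G_r)\le 3\ell+7r=3(v-3-2r)+7r=3v-9+r\le 3v-7$, which is comfortably within $3v+5$. The only real obstacle is making sure the replicated edges have low-degree endpoints. Choosing edges in distinct $6$-cycles of $L$ settles this, and if any doubt remains, the crude bound $d(x),d(y)\le 6$ still gives at most $11$ edges per step and a total of $3v-9+5r\le 3v+1$.
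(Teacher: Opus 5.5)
Your proof is correct and is essentially the paper's argument. Both start from $D_\ell$ and apply at most two edge replications to edges whose endpoints have degree $4$, each replication adding $4+4-1=7$ edges. The paper keeps such an edge available by noting that the new edge $x'y'$ has degree-$4$ endpoints, whereas you take an edge in a different $6$-cycle; both work. Your exact count $3v-9+r\le 3v-7$ is simply a sharper version of the paper's crude $3v-9+2\cdot 7=3v+5$. The aside about replacing $\ell,r$ by $\ell-6,r+3$ is incoherent, since it would make $\ell$ smaller, but it is never needed: as you then check, every odd $v\ge 15$ gives $\ell\ge 12$ with $r\le 2$.
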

\begin{proof}
    If $G$ is $2$-e.c., then replicating an edge results in a graph $G'$ with $v(G')=v(G)+2$ and which is $2$-e.c.. If there exists an edge in $G$ both of whose endpoints have degree at most $C$, then $e(G')-e(G)\le 2C-1$. Moreover, the graph $G'$ has an edge both of whose endpoints have degree at most $C$. For the graph $D_\ell$, we may take $C=4$. For any odd $v\not\equiv3\pmod 6$, there is some integer $u=6k+3$ such that $v-u\in\{2,4\}.$ Then applying one or two edge replication to $D_{6k}$ produces a $2$-e.c. graph on $v$ vertices with at most $3v-9+2\cdot(2\cdot 4-1)=3v+5$ edges.
\end{proof}

\begin{cor} \label{Corollary Even v}
    If $v\ge 20$ is even then $m(v,2)\le 3v+25$.
\end{cor}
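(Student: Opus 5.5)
Edge replication adds two vertices at a time, so it preserves the parity of $v$. The odd case (\autoref{Corollary Odd v}) started from $D_\ell$ with $v=\ell+3$ odd. For even $v$ I therefore need a single new ingredient: one 2-e.c. graph of \emph{even} order that has few edges and contains an edge whose endpoints both have small degree. I would then pad it to every large even order by repeated edge replication, exactly as before. The natural candidate is a one-vertex modification of $D_\ell$. Add a new vertex $w$ to $D_\ell$ and join it to a bounded number of vertices, so that the resulting graph $D_\ell^+$ has $\ell+4$ vertices and $3\ell+O(1)$ edges, and is still 2-e.c.

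\textbf{Choice of $w$.} I would try making $w$ a ``twin-like'' copy of a low-degree vertex that still has enough private structure. For instance, take $w$ adjacent to $u_1$ and $u_2$, so that $w$ behaves like a vertex of $L_3$. Also join $w$ to two vertices of $L$ that lie in different sets $L_i$, such as $w\sim 1$ and $w\sim \ell/3+1$ (one in $L_1$, one in $L_2$). The new graph must then satisfy the 2-e.c. condition in every case.

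\textbf{Pairs not involving $w$.} All witnesses found in the proof that $D_\ell$ is 2-e.c. still exist, because we only added edges at $w$. The one exception is that a vertex $z$ previously witnessing $N(\overline x,\overline y)$ or $N(x,\overline y)$ could now be $w$'s neighbour. Since the witnesses in $L$ come from the $\ell/6\ge 2$ disjoint $6$-cycles, there is a lot of freedom, so this is harmless.

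\textbf{Pairs $\{w,y\}$.} These are the step I expect to be the main obstacle. I need all four types $N(w,y)$, $N(w,\overline y)$, $N(\overline w,y)$ and $N(\overline w,\overline y)$ nonempty, for $y\in U$ and for $y\in L$. I would check this by the same case analysis as Cases 1--3.
\begin{itemize}
    \item For $y=u_3$: $N(w,u_3)$ contains $1$ and $\ell/3+1$, since $u_3$ is adjacent to $L_1\cup L_2$. Also $N(w,\overline{u_3})\ni u_1$.
    \item For $y=u_1$ or $y=u_2$: one of $1,\ell/3+1$ is adjacent to $y$ and the other is not, which gives the mixed types.
    \item For $y\in L$: $N(w,y)$ requires a common neighbour. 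The vertices $u_1,u_2$ cover $y\in L_3$, $u_2$ covers $y\in L_1$, and $u_1$ covers $y\in L_2$. $N(w,\overline y)$ is handled similarly using the $u_i$ or the two $L$-neighbours of $w$.
\end{itemize}
If some case fails, for instance $y=1$ itself, or the $N(w,\overline y)$ type when $y$ is adjacent to all of $w$'s neighbours, I would add one or two more edges from $w$. The theorem's constant $25$ leaves room for this.

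\textbf{Counting.} The base even graph has $v_0=\ell+4$ vertices and at most $3\ell+c=3v_0-12+c$ edges, where $c=\deg w\le 6$. It still has an edge with both endpoints of degree at most $4$ (a cycle edge of $D_\ell$ far from $w$). For even $v\ge 20$ not of the form $\ell+4$, write $v=\ell+4+2t$ with $t\in\{1,2\}$. This is possible since $\ell+4$ runs over the residues $4\bmod 6$, and the constraints $\ell\ge12$, $6\mid\ell$ are what force $v\ge 20$ or so. Applying $t$ edge replications at low-degree edges then costs at most $7$ edges each, by the argument of \autoref{Corollary Odd v}. This gives at most $3v-6t-12+c+7t\le 3v+25$, with the generous constant absorbing the $c$ extra edges at $w$.
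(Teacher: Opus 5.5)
Your proposal is correct as written. With $N(w)=\{u_1,u_2,1,\ell/3+1\}$, every pair $\{w,y\}$ has all four witnesses, so no fallback edges are needed. For example, for $y=1$ we have $u_2\in N(w,1)$, $u_1\in N(w,\overline{1})$ and $u_3\in N(\overline{w},1)$.

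Your architecture matches the paper's: build one even-order 2-e.c. graph from $D_\ell$, then pad by edge replication at an edge whose endpoints have degree $4$. The gadget is what differs.
\begin{itemize}
\item The paper adds five vertices $W$ inducing a $C_5$, each joined to all of $U$ and to nothing in $L$. This reaches order $\ell+8\equiv 2\pmod 6$ with $3\ell+20$ edges. Keeping $W$ away from $L$ makes the check uniform over $y\in L$, and the $C_5$ supplies the mixed witnesses for pairs inside $W$.
\item Your single degree-$4$ vertex reaches order $\ell+4\equiv 4\pmod 6$ with $3\ell+4$ edges. After $t\le 2$ replications this gives $m(v,2)\le 3v-8+t\le 3v-6$ for all even $v\ge 16$. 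That is a sharper bound, on a larger range, than the stated $3v+25$.
\end{itemize}

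The cost is a less uniform verification, and it is tight exactly where you wrote ``handled similarly''.
\begin{itemize}
\item For $y\in L_3$, both $U$-neighbours of $y$ lie in $N(w)$. So $N(w,\overline{y})$ and $N(\overline{w},y)$ must be witnessed inside $L$; your list never addresses the latter.
\item This works because $1$ has no neighbour in $L_3$. Hence $1\in N(w,\overline{y})$, and at most one cycle neighbour of $y$ lies in $N(w)$, so the other lies in $N(\overline{w},y)$.
\item At $y=2\ell/3$ we have $N(y)=\{u_1,u_2,\ell/3+1,2\ell/3+1\}$, and the witnesses $1$ and $2\ell/3+1$ are unique. You should write this case out explicitly.
\end{itemize}

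Finally, your concern about pairs not involving $w$ is moot. Adding $w$ changes no adjacency among the old vertices, so every witness from $D_\ell$ survives unchanged.
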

\begin{proof}
    First we prove the claim where $v$ is of the form $6k+2$. Set $\ell=6(k-1)\ge 12$, and let $D_\ell'$ be the graph obtained from $D_\ell$ by adding 5 new vertices $W=\{w_1,\ldots,w_5\}$ such that $W$ induces a 5-cycle, $w_i\sim u_j$ for every $i,j$, and $w_i\not\sim x$ for all $i\in[5],x\in L$. Then $v(D_\ell')=v$. To show that $D_\ell'$ is 2-e.c., we need only check pairs $(x,y)$ where at least of of $x,y$ belongs to $W$. If $x,y\in W$, then $u_1\in N(x,y)$. Since $W$ induces a 5-cycle, the sets $N(\overline x,y)$ and $N(x,\overline y)$ are nonempty. We have that any $z\in L$ satisfies $z\in N(\overline x,\overline y)$. If $x\in W,y\in U$, then any element of $N_W(x)$ belongs to $N(x,y)$. Any element of $U\sm\{y\}$ belongs to $N(x,\overline y)$. Since $y$ has a neighbor $z_1\in L$ and a non-neighbor $z_2\in L$, we have $z_1\in N(\overline x,y)$ and $z_2\in N(\overline x,\overline y)$. Finally, if $x\in W,y\in L$, then any element of $N_U(y)$ belongs to $N(x,y)$. Any element of $N_W(x)$ belongs to $N(x,\overline y)$. Any element of $N_L(y)$ belongs to $N(\overline x,y)$. A non-neighbor of $y$ in $L$ belongs to $N(\overline x,\overline y)$. Thus, $D_\ell'$ is 2-e.c. and so $m(v,2)\le e(D_\ell')=3v-9+5+15=3v+11.$

    Now let $v\not\equiv 2\pmod 6$ be an even integer. There exists an integer $u=6k+2$ with $v-u\in\{2,4\}$. The graph $D_{6k}'$ contains an edge both of whose endpoints have degree 4, so applying edge-replication in similar fashion to Corollary \ref{Corollary Odd v} proves that $m(v,2)\le 3v+11+2(2\cdot 4-1)=3v+25.$
\end{proof}

\begin{cor} \label{m(v,2) asymptotic upper bound}
    We have $m(v,2)\le 3v+25$ for all $v\ge 20$.
\end{cor}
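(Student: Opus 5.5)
The statement is a direct consequence of the three preceding corollaries, so the plan is simply to split by the residue of $v$ and quote the right one. Fix $v\ge 20$. If $v$ is odd, then $v\ge 21\ge 15$. There are two sub-cases. If $v\equiv 3\pmod 6$, Corollary \ref{Corollary 6k+3} applies with $\ell=v-3$, since $\ell\ge 18\ge 12$ and $6\mid\ell$. It gives $m(v,2)\le 3v-9\le 3v+25$. Otherwise Corollary \ref{Corollary Odd v} gives $m(v,2)\le 3v+5\le 3v+25$. If $v$ is even, then Corollary \ref{Corollary Even v} gives $m(v,2)\le 3v+25$ directly.

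The only point needing care is that the hypotheses of the cited corollaries hold throughout the range $v\ge 20$, in particular the requirement $\ell\ge 12$ inside their proofs. I would check the smallest values explicitly.

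\textbf{Even $v=6k+2$.} The proof uses $\ell=6(k-1)$. The condition $\ell\ge 12$ requires $k\ge 3$, i.e.\ $v\ge 20$, which is exactly the threshold.

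\textbf{Even $v\not\equiv 2\pmod 6$.} The construction starts from $D'_{6k}$ on $u=6k+2$ vertices with $v-u\in\{2,4\}$, and this needs $u\ge 20$. For $v=22$ we have $v-20=2$, and for $v=24$ we have $v-20=4$, so both have a valid $u=20$. This is the step I expect to be the main (if minor) obstacle: matching the notation $D'_{6k}$ in that proof to the base graph $D'_\ell$ on $u$ vertices, so that no base case is too small.

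\textbf{Odd $v$.} The base graph $D_{6k}$ has order $6k+3$ with $6k\ge 12$, so the smallest base order is $15$. Every odd $v\ge 17$ with $v\not\equiv 3\pmod 6$ lies $2$ or $4$ above some admissible $6k+3\ge 15$.

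Combining the three cases, $m(v,2)\le 3v+25$ for every $v\ge 20$, since each bound obtained ($3v-9$, $3v+5$, $3v+25$) is at most $3v+25$.
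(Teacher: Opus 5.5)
Your proposal is correct and is exactly how the paper obtains this corollary: it states it without separate proof, as the combination of Corollaries~\ref{Corollary 6k+3}, \ref{Corollary Odd v}, and~\ref{Corollary Even v} split by the parity of $v$. Corollary~\ref{Corollary Odd v} already covers every odd $v\ge 15$, so your $v\equiv 3\pmod 6$ subcase is optional. Your threshold check is sound, and you are right that the base graph written $D'_{6k}$ in the proof of Corollary~\ref{Corollary Even v} should be read as the graph $D'_{6(k-1)}$ on $u=6k+2$ vertices.
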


This completes the proof of the upper bound in \autoref{Theorem m(v,2)}.

\subsubsection{Lower bound}

We now prove the lower bound in \autoref{Theorem m(v,2)}, restated below.

\begin{prop} \label{Proposition m(v,2) lower bound}
    We have $m(v,2)\ge 3v-5625$ for all $v\ge 3201$.
\end{prop}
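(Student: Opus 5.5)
The plan is a counting/discharging argument with $6$ as the target average degree. We write $2e(G)-6v=\sum_{x}(d(x)-6)$ and show that the total deficit $\sum_{d(x)<6}(6-d(x))$ exceeds the total surplus $\sum_{d(x)>6}(d(x)-6)$ by at most an absolute constant. By Lemma \ref{Lemma minimum degree} with $m_{ec}(1)=4$, every vertex has degree at least $4$. So only vertices of degree $4$ (deficit $2$) and degree $5$ (deficit $1$) are deficient, and the total deficit is at most $2v$.

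The structural input comes from the $2$-e.c. property applied to a deficient vertex $x$.
\begin{itemize}
\item Taking $A=\{x,y\}$ for every $y\ne x$ shows that $N(x)$ dominates $V\sm\{x\}$. Hence $\sum_{y\in N(x)}d(y)\ge v-1$, so $x$ has a neighbour of degree at least $(v-1)/5$.
\item $G[N(x)]$ is $1$-e.c. on $4$ or $5$ vertices. On $4$ vertices it must be $2K_2$; on $5$ vertices it is one of a short list, such as $P_3\cup K_2$ or $C_5$.
\item Taking $A=\{x\},B=\{y\}$ for $y\in N(x)$ gives a neighbour of $x$ avoiding $y$. Taking $A=\{y\}$, $B=\{x\}$ shows each $y\in N(x)$ has a neighbour outside $N[x]$.
\end{itemize}
We call a vertex \emph{big} if its degree is at least $v/c$ for a suitable constant $c$. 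The first bullet shows every deficient vertex has a big neighbour, and we plan to route all of the deficit onto big vertices. A big vertex $h$ of degree $d(h)$ has surplus $d(h)-6$, but it may be adjacent to up to $d(h)$ deficient vertices, each needing up to $2$. So the surplus of a single big vertex is not enough on its own.

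The key step is to use the domination property more sharply. Since $|N(x)|\le5$ must cover all $v-1$ other vertices, the neighbours of $x$ have degrees summing to at least $v-1$. If the deficient vertices attached to $h$ used only $h$ as their large neighbour, each would also need other neighbours of total degree about $v-d(h)$. So either $d(h)\approx v$, or many vertices of fairly large degree are present, which creates surplus elsewhere. We would formalise this with the following rule: each vertex $y$ sends charge $(d(y)-6)\cdot w(x,y)$ to each deficient neighbour $x$, with weights $w$ chosen proportional to $d(y)$ among the neighbours of $x$. The domination inequality $\sum_{y\in N(x)}d(y)\ge v-1$ then guarantees each deficient $x$ receives about $2$, up to an error term.

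Separately, we would bound the number of big vertices by a constant. This follows from $e(G)\le 3v+25$ if we assume, for contradiction, that $e(G)<3v-5625$, since each big vertex has degree at least $v/c$. The errors then come only from the $O(1)$ big vertices and from vertices of degree $6$ to $O(1)$, which is where constants like $5625$ and the threshold $v\ge 3201$ would arise.

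The expected main obstacle is making the charging exact to within $O(1)$ rather than to within $\epsilon v$. Deficient vertices whose big neighbours are few and shared, for example all adjacent to the same two or three hubs as in $D_\ell$, must be shown to force enough edges among the small-degree vertices. Here I would use the $1$-e.c. structure of $G[N(x)]$, which forces each deficient vertex to have at least two small-degree neighbours inside $N(x)$. I would also use the $N(\overline{x},\overline{y})\ne\emptyset$ conditions between hubs, which show that the hubs' neighbourhoods cannot overlap too much. Together these should show that the low-degree part behaves like the cubic-plus-hubs structure of $D_\ell$, giving $e(G)\ge 3v-O(1)$.
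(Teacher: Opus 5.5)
Your proposal has a genuine gap. The step you flag as the main obstacle, making the charging exact up to an additive constant, is where all of the work lies, and the sketch does not carry it out.

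\textbf{The charging rule.} The one concrete rule you give does not conserve charge. The weights $w(x,y)=d(y)/\sum_{y'\in N(x)}d(y')$ are normalised over the receiver's neighbourhood, not the sender's. So $y$ pays out $(d(y)-6)\sum_{x}w(x,y)$, which can be of order $d(y)^2$. Already in $D_\ell$, each hub has weight about $1/2$ towards each of its $2\ell/3$ neighbours. It would therefore send $\Theta(\ell^2)$ against a surplus of about $2\ell/3$, while each degree-$4$ vertex would receive $\Theta(\ell)$ rather than $2$. If you normalise at the sender instead, so that a hub pays about $1$ per edge, everything hinges on deficient vertices with only one paying neighbour. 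That is precisely the case your sketch leaves open.

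\textbf{The structural facts.} The facts you intend to use for that case are not right either. A $1$-e.c.\ graph on four vertices can be $2K_2$, $P_4$ or $C_4$; in $D_\ell$ itself, $G[N(x)]\cong P_4$ for every $x\in L$. More importantly, $1$-existential closure of $G[N(x)]$ constrains adjacencies inside $N(x)$, not degrees in $G$. So it cannot force two small-degree neighbours: a degree-$4$ vertex whose four neighbours are hubs inducing a $2K_2$ is not excluded.

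\textbf{What the paper does instead.} The missing ingredient is a way to pin down the hubs and then count exactly.
\begin{enumerate}
\item Edge counting with $H=\{x:d(x)\ge 43\}$ and $e\le 3v+25$ produces a single vertex $\ell_1\notin H$ with at most two neighbours $h_1,h_2$ in $H$.
\item Every vertex has a common neighbour with $\ell_1$, and the other neighbours of $\ell_1$ have degree below $43$. Hence $N(h_1)\cup N(h_2)$ misses fewer than $43^2=1849$ vertices.
\item One then bounds $e\ge\sum_{x\in X}\bigl(d_Y(x)+\tfrac12 d_X(x)\bigr)$ and checks that each term is at least $3$. Common neighbours of $h_1,h_2$ and vertices of degree at least $5$ cause no trouble.
\item Set $A_1=N(h_1,\overline{h_2})$ and $A_2=N(h_2,\overline{h_1})$. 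If $|A_1|\le 26$ or $|A_2|\le 26$, one hub nearly dominates, and counting edges leaving its neighbourhood suffices.
\item The real case is degree-$4$ vertices $x\in A_1$ with $|A_2|>26$. The key observation is that $N(x)\setminus\{h_1\}$ must meet every vertex of $A_2$, because a common neighbour of $x$ and $a\in A_2$ cannot be $h_1$.
\item The paper greedily chooses such $x_1,\dots,x_k$ with pairwise disjoint $N_i=N(x_i)\setminus\{h_1\}$. It moves $N=\bigcup_i N_i$ into $Y$, so that every remaining degree-$4$ vertex of $A_1$ has a second neighbour in $Y$.
\item It pays for the $3k$ vertices removed from $X$ with the at least $k$ edges that each vertex of $A_2$ sends into $N$. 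For the $m=|A_2\cap N|$ such vertices inside $N$ this gives $e(N)\ge km/2$; for those left in $X$ it gives $d_Y\ge k+1$.
\end{enumerate}

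Your remark that the low-degree part ``should behave like $D_\ell$'' has to be replaced by an argument of this kind. The paper's count is itself a discharging, since each $X$--$Y$ edge is credited wholly to its endpoint in $X$. So your framework is compatible with it, but the substance lies in choosing $Y$ and proving each vertex of $X$ earns $3$.
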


Our overall strategy is as follows. Let $G$ be an extremal 2-e.c. graph with $m(v,2)$ edges. We partition its vertices into those with high degree $H$, and those with low degree, $L$; $H$ cannot be too large, or else $e(G)$ exceeds the known upper bound on $m(v,2)$. Using the bound $e(G)\ge e(L,H)+e(L)=\sum_{\ell\in L}d_H(\ell)+\frac{1}{2}d_L(\ell)$, we are done if $|H|=O(1)$ and all vertices $\ell\in L$ satisfy $d_H(\ell)\ge 2$ and $d_L(\ell)\ge 2$ (as in the graph $D_\ell$). We can show that there are two vertices $h_1,h_2\in H$ such that all but $O(1)$ vertices are adjacent to at least one of $h_1,h_2$; but it is possible that $\omega(1)$ vertices are adjacent to only one of $h_1,h_2$. In this case (addressed below \autoref{Claim A1 A2 size}), we must find a different partition $V=X\sqcup Y$ for which $e(X,Y)+e(X)\ge 3v-O(1).$ We do this by an iterative process in which vertices adjacent to only one of $h_1,h_2$ have their neighbors added to the new `high-degree' set $Y$.

\begin{proof}[Proof of Proposition \ref{Proposition m(v,2) lower bound}]
    Let $G=(V,E)$ be a 2-e.c. graph with $e=m(v,2)$ edges and note that $e\le 3v+25$. Set $L=\{x\in V:d(x)< 43\}$, and set $H=V\sm L$. Then
    $$21.5|H|\le\frac{1}{2}\sum_{h\in H}d(h)\le e\le 3v+25$$
    and so $|H|\le\frac{3v+25}{21.5}\le 0.14v+2.$
    \begin{claim} \label{Claim 2 H neighbors}
    There exists $\ell_1\in L$ such that $d_H(\ell_1)\le 2$.
    \end{claim}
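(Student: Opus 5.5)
We argue by contradiction: suppose every $\ell\in L$ has $d_H(\ell)\ge 3$. Then count edges between $L$ and $H$:
$$e\ \ge\ e(L,H)=\sum_{\ell\in L}d_H(\ell)\ \ge\ 3|L|.$$
This alone only gives roughly $3v$, so it does not contradict $e\le 3v+25$. We therefore also have to count edges inside $L$ and edges inside $H$. The information we have on these comes from the 2-e.c. property and from Lemma \ref{Lemma minimum degree}, which gives $\delta(G)\ge m_{ec}(1)=4$.

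The refined count is as follows. Write $e=e(L,H)+e(L)+e(H)$. Since $e(H)\ge 0$, we get
$$e\ \ge\ \sum_{\ell\in L}\Big(d_H(\ell)+\tfrac12 d_L(\ell)\Big).$$
For each $\ell\in L$ we have $d_H(\ell)\ge 3$ and $d_H(\ell)+d_L(\ell)\ge 4$. So if $d_H(\ell)=3$ then $d_L(\ell)\ge1$, and the contribution of $\ell$ is at least $3.5$. If $d_H(\ell)\ge 4$, the contribution is at least $4$. Hence $e\ge 3.5|L|\ge 3.5(0.86v-2)\approx 3.01v-7$. This exceeds $3v+25$ once $0.01v>32$, i.e.\ for $v$ around a few thousand, which matches the hypothesis $v\ge 3201$.

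The main obstacle is making the constants actually close. The bound $3.5\cdot 0.86=3.01$ is only barely above $3$, so the cutoff $43$ and the estimate $|H|\le 0.14v+2$ must be used precisely. Concretely, $|L|\ge v-\frac{3v+25}{21.5}$, so
$$e\ \ge\ 3.5\Big(v-\frac{3v+25}{21.5}\Big)=3.5v-\frac{10.5v+87.5}{21.5}\ \approx\ 3.0116v-4.07.$$
This is larger than $3v+25$ when $0.0116v>29.1$, i.e.\ $v\gtrsim 2510$, and that is comfortably within $v\ge 3201$.

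If the arithmetic were tight, the fallback would be to also add $e(H)$, or to use $e(L,H)\le\sum_{h\in H}d(h)$ together with the high degrees in $H$. That should not be needed, since the computation above already gives the contradiction $e>3v+25\ge m(v,2)=e$. Hence some $\ell_1\in L$ has $d_H(\ell_1)\le 2$.
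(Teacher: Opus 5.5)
Your proof is correct and is essentially the paper's argument. You assume $d_H(\ell)\ge 3$ for all $\ell\in L$, bound $e\ge\sum_{\ell\in L}\bigl(d_H(\ell)+\tfrac12 d_L(\ell)\bigr)\ge 3.5|L|$ using $\delta(G)\ge 4$, and compare $3.5(0.86v-2)=3.01v-7$ with $3v+25$ for $v\ge 3201$. Your explicit split into $d_H(\ell)=3$ versus $d_H(\ell)\ge 4$ justifies the per-vertex bound of $3.5$ more carefully than the paper's one line, and the fallback paragraph is not needed.
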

    \begin{proof}
        Suppose not. Then we have
        $$\begin{aligned}
            e&\ge e(L,H)+e(L)=\sum_{\ell\in L}d_H(\ell)+\frac{1}{2}\sum_{\ell\in L}d_L(\ell)
            =\sum_{\ell\in L}d_H(\ell)+\frac{1}{2}d_L(\ell).
        \end{aligned}$$
        Since $d_H(\ell)+d_L(\ell)\ge \delta(G)\ge 4$, we have $d_H(\ell)+\frac{1}{2}d_L(\ell)\ge 3+\frac{1}{2}\cdot 1=3.5.$ Thus,
        $$\sum_{\ell\in L}d_H(\ell)+\frac{1}{2}d_L(\ell)\ge 3.5|L|\ge 3.5(v-0.14v-2)=3.01v-7=3v-7+0.01v>3v+25,$$
        since $v\ge 3201$, a contradiction. 
    \end{proof}
    Let $\ell_1$ be as in Claim \ref{Claim 2 H neighbors}. Let $h_1,h_2$ be two  neighbors of $\ell_1$ such that $N(\ell_1)\sm\{h_1,h_2\}\subseteq L$. Set $A=(N(h_1)\cup N(h_2))\sm\{h_1,h_2\}.$
    \begin{claim} \label{Claim h1 h2 dominating}
        We have $|A|\ge v-1849.$
    \end{claim}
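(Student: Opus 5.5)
The plan is to bound the complement $V\sm A$ by applying the 2-e.c.\ property to pairs that contain $\ell_1$, and then using the fact that the neighbours of $\ell_1$ other than $h_1,h_2$ all lie in $L$, so they have small degree. Note that $1849=43^2$, which suggests exactly this kind of "neighbours of neighbours of a low-degree vertex" count.

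Let $z\in V\sm(A\cup\{h_1,h_2,\ell_1\})$, so $z\ne\ell_1$ and $z$ is adjacent to neither $h_1$ nor $h_2$. Since $G$ is 2-e.c., applying the definition to the pair $\{\ell_1,z\}$ with $A=\{\ell_1,z\}$ and $B=\emptyset$ gives a vertex $w\in N(\ell_1,z)$. This $w$ is a neighbour of $\ell_1$ and is adjacent to $z$, so $w\ne h_1,h_2$. Hence $w\in N(\ell_1)\sm\{h_1,h_2\}\subseteq L$, and therefore
$$z\in\bigcup_{w\in N(\ell_1)\sm\{h_1,h_2\}}N(w).$$

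Now I count. Since $\ell_1\in L$, we have $d(\ell_1)\le 42$, so $N(\ell_1)\sm\{h_1,h_2\}$ contains at most $42$ vertices (at most $40$ if $h_1,h_2$ are genuinely neighbours of $\ell_1$). Each such $w$ lies in $L$, so $d(w)\le 42$. The union above therefore has at most $42\cdot 42=1764$ vertices. Adding back $h_1$, $h_2$ and $\ell_1$ gives
$$|V\sm A|\le 1764+3\le 1849,$$
that is, $|A|\ge v-1849$. In fact $\ell_1\in A$ already, because it is adjacent to $h_1$, but we do not need this.

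I do not expect a real obstacle here. The only care needed is the degenerate case in which $\ell_1$ has fewer than two neighbours in $H$, so that $h_1,h_2$ were chosen partly from $L$. The argument is unaffected: we only used that every neighbour of $\ell_1$ other than $h_1,h_2$ lies in $L$, which holds by the choice of $h_1,h_2$.
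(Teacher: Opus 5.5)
Your proof is correct and is essentially the paper's argument: apply the 2-e.c.\ property to $\{z,\ell_1\}$ to force a common neighbour in $N(\ell_1)\sm\{h_1,h_2\}\subseteq L$, then bound the union of the neighbourhoods of these low-degree vertices by roughly $43^2$. You also explicitly account for $h_1,h_2,\ell_1$, which the paper leaves implicit (for instance when $h_1\sim h_2$, so that $h_2$ could be the common neighbour for $x=h_1$); this is a small gain in rigour, absorbed by the slack in the bound.
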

    \begin{proof}
    Let $x\in V\sm A$. Since $N(x,\ell_1)\ne\emptyset$, we have $x\sim \ell$ for some $\ell\in N(\ell_1)\sm\{h_1,h_2\}$. But the number of $x\in V$ with this property is at most
    $$\sum_{\ell\in N(\ell_1)\sm\{h_1,h_2\}}d(\ell)<\sum_{\ell\in N(\ell_1)\sm\{h_1,h_2\}}43<43\cdot 43=1849.$$
    \end{proof}
    Set $A_1=N(h_1,\overline{h_2})$, $A_2=N(h_2,\overline{h_1})$, and $A_{12}=N(h_1,h_2)$.
    \begin{claim} \label{Claim A1 A2 size}
        If $|A_1|\le 26$ or $|A_2|\le 26$, then $e\ge 3v-5625$.
    \end{claim}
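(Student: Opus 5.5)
Assume without loss of generality that $|A_1|\le 26$; the case $|A_2|\le 26$ is symmetric after swapping $h_1$ and $h_2$. The plan is to partition $V=X\sqcup Y$ with $Y$ of bounded size and to show that every $x\in X$ satisfies $d_Y(x)\ge 2$. Then, exactly as in Claim \ref{Claim 2 H neighbors},
$$e\ge e(X,Y)+e(X)=\sum_{x\in X}\Bigl(d_Y(x)+\tfrac12 d_X(x)\Bigr).$$
Since $\delta(G)\ge 4$, the condition $d_Y(x)\ge 2$ gives $d_Y(x)+\frac12 d_X(x)\ge 3$. Indeed, if $d_Y(x)\ge 3$ this is immediate, and if $d_Y(x)=2$ then $d_X(x)\ge 2$.

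\textbf{The partition.} Take $Y=(V\sm A)\cup A_1$. Note that $V\sm A$ already contains $h_1$ and $h_2$, since $A$ excludes them. By Claim \ref{Claim h1 h2 dominating}, $|V\sm A|\le 1849$, so $|Y|\le 1849+26=1875$. The complement is $X=A_2\cup A_{12}$. This uses $A=A_1\sqcup A_2\sqcup A_{12}$, which holds because $A$ consists of the vertices other than $h_1,h_2$ adjacent to $h_1$ or $h_2$.

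\textbf{Checking $d_Y(x)\ge 2$.} If $x\in A_{12}$, then $x$ is adjacent to both $h_1,h_2\in Y$, so $d_Y(x)\ge 2$. If $x\in A_2$, then $x\sim h_2\in Y$, and we need one more neighbour in $Y$. This is the only step using existential closure. Apply $2$-e.c. to the pair $(x,h_2)$ to get a vertex $w\in N(x,\overline{h_2})$. Then $w\ne h_2$, and $w\notin A_2\cup A_{12}$ because those vertices are adjacent to $h_2$. Also $w\ne h_1$, because $x\in A_2$ is not adjacent to $h_1$. Hence $w\in A_1\cup (V\sm A\sm\{h_1,h_2\})\subseteq Y$, and $w\ne h_2$, so $d_Y(x)\ge 2$.

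\textbf{Conclusion.} Therefore
$$e\ge 3|X|\ge 3(v-1875)=3v-5625.$$
I expect the main obstacle to be choosing $Y$ correctly. It must be small, which is where $|A_1|\le 26$ and Claim \ref{Claim h1 h2 dominating} enter, and it must contain a second neighbour of every vertex of $A_2$. Putting all non-neighbours of $h_2$ other than $x$'s non-neighbour $h_1$ into $Y$ is exactly what makes the $2$-e.c. witness $N(x,\overline{h_2})$ land in $Y$.
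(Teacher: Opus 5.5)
Your proof is correct and is essentially the paper's argument. The paper assumes $|A_2|\le 26$ and takes $X=N(h_1)$ with $|X|\ge v-1875$, notes that each vertex of $X$ has a second neighbour outside $X$ besides $h_1$ (this is your 2-e.c. step), and sums $d_Y(x)+\frac12 d_X(x)\ge 3$. Your $X=A_2\cup A_{12}=N(h_2)\setminus\{h_1\}$ is the mirror image of this (differing at most by the single vertex $h_1$), and your explicit use of $N(x,\overline{h_2})$ simply spells out the step the paper states in one line.
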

    \begin{proof}
        By symmetry, we may assume that $|A_2|\le 26$. By Claim \ref{Claim h1 h2 dominating}, this implies that $d(h_1)\ge v-1875$. Set $N=N(h_1)$, and note that every vertex in $N$ has two neighbors outside $N$, one of which is $h_1$. Thus, for $x\in N$, we have $d_{V\sm N}(x)+d_N(x)\ge \delta(G)\ge 4$ and so $d_{V\sm N}(x)+\frac{1}{2}d_N(x)\ge 2+\frac{1}{2}\cdot 2=3$. Then
        $$e(G)\ge e(N,V\sm N)+e(N)=\sum_{x\in N}d_{V\sm N}(x)+\frac{1}{2}d_N(x)\ge 3|N|=3v-3\cdot 1875=3v-5625.$$
    \end{proof}
    By Claim \ref{Claim A1 A2 size}, we are done if $|A_1|\le 26$ or $|A_2|\le 26$, so assume that $|A_1|>26$ and $|A_2|>26$. If every element of $A_1\cup A_2$ has degree at least 5, then setting $K=\{h_1,h_2\}$, we have
    $$e(G)\ge\sum_{x\not\in K}d_K(x)+\frac{1}{2}d_{V\sm K}(x)\ge\sum_{x\in A_{12}}\left(2+\frac{1}{2}\cdot 2\right)+\sum_{x\in A_1\cup A_2}\left(1+\frac{1}{2}\cdot 4\right)=3|A|\ge 3v-5547,$$
    and we are done. Otherwise, assume there exists $x_1\in A_1$ such that $d(x_1)=4$. Set $A^{(1)}=A,$ $A_1^{(1)}=A_1$, $B_1^{(1)}=\{a\in A_1: d(a)=4\}$, $C_1^{(1)}=\{a\in A_1:d(a)\ge 5\}$ $A_2^{(1)}=A_2$, $A_{12}^{(1)}=A_{12}$. Set $N_1=N(x_1)\sm\{h_1\}$ and note that $|N_1|=3$. Now set $A^{(2)}=A\sm N_1$, $A_1^{(2)}=A_1^{(1)}\sm N_1$, $B_1^{(2)}=\{a\in A_1^{(2)}: d(a)=4\}$, $C_1^{(2)}=\{a\in A_1^{(2)}: d(a)\ge 5\}$, $A_2^{(2)}=A_2^{(1)}\sm N_1$, $A_{12}^{(2)}=A_{12}^{(1)}\sm N_1$. In general, suppose we have defined $A^{(i)}$, $A_1^{(i)}$, $B_1^{(i)}$, $C_1^{(i)}$, $A_2^{(i)}$, $A_{12}^{(i)}$, and $N_1,\ldots,N_{i}$. We stop unless there exists $x_{i+1}\in B_1^{(i)}$ with the property that $x_{i+1}$ has no neighbors in $N_1\cup\cdots\cup N_{i}$. In that case, set 
    $$\begin{aligned}
            N_{i+1}&=N(x_{i+1})\sm \{h_1\},\\ A^{(i+1)}&=A^{(i)}\sm N_{i+1},\\ A_1^{(i+1)}&=A_1^{(i)}\sm N_{i+1},\\ B_1^{(i+1)}&=\{a\in A_1^{(i+1)}:d(a)=4\},\\
            C_1^{(i+1)}&=\{a\in A_1^{(i+1)}:d(a)\ge 5\},\\
            A_2^{(i+1)}&=A_2^{(i)}\sm N_{i+1},\\
            A_{12}^{(i+1)}&=A_{12}^{(i)}\sm N_{i+1}
    \end{aligned}$$
 (see Figure \ref{Figure vertex partition}). This process eventually terminates, say after finding $x_k$ ($k\ge 1$). 

\begin{figure}
\begin{center}
\begin{tikzpicture}[scale=1]
    \draw[very thick] (-1.5,0)--(1,2)--(0,0);
    \draw[very thick] (1,2)--(3,0)--(4,2)--(6,0);
    \draw[fill=black] (1,2) circle (2pt) node[above] {$h_1$};
    \draw[fill=black] (4,2) circle (2pt) node[above] {$h_2$};
    \draw[fill=white] (0,0) ellipse (.5 and .5);
    \draw[fill=white] (-2,0) ellipse (1 and .5);
    \node at (-2,-.9) {$B_1^{(i)}$};
    \node at (0,-.9) {$C_1^{(i)}$};
    \draw[fill=white] (3,0) ellipse (1.7 and .6);
    \node at (3,-0.9) {$A_{12}^{(i)}$};
    \draw[fill=white] (6,0) ellipse (.5 and .5);
    \node at (6,-0.9) {$A_2^{(i)}$};
    \draw[fill=white] (8,0) ellipse (.5 and .5);
    \node at (8,-0.9) {$\le 1849$};
    \draw[fill=black] (-2.5,0.1) circle (2pt) node[below] {$x_1$};
    \node at (-2,0.1) {$\cdots$};
    \draw[fill=black] (-1.5,0.1) circle (2pt) node[below] {$x_{i}$};
    \draw[fill=white] (-3,2) ellipse (.5 and .2);
    \draw[fill=white] (-1,2) ellipse (.5 and .2);
    \node at (-3,2.4) {$N_1$};
    \node at (-1,2.4) {$N_{i}$};
    \draw[fill=black] (-3.25,2) circle (2pt);
    \draw[fill=black] (-3,2) circle (2pt);
    \draw[fill=black] (-2.75,2) circle (2pt);
    \draw[fill=black] (-1.25,2) circle (2pt);
    \draw[fill=black] (-1,2) circle (2pt);
    \draw[fill=black] (-0.75,2) circle (2pt);
    \draw (-3.25,2)--(-2.5,0.1)--(-3,2);
    \draw (-2.5,0.1)--(-2.75,2);
    \draw (-1.25,2)--(-1.5,0.1)--(-1,2);
    \draw (-1.5,0.1)--(-0.75,2);
\end{tikzpicture}
\caption{The partition of $V(G)$ obtained after step $i$.} \label{Figure vertex partition}
\end{center}
\end{figure}
    
    Now set $N=N_1\sqcup\cdots\sqcup N_k$. We note the following facts:
    \begin{enumerate}
        \item $V=B_1^{(k)}\sqcup C_1^{(k)}\sqcup A_2^{(k)}\sqcup A_{12}^{(k)}\sqcup K\sqcup  N\sqcup D$, with $|K|=2,|N|=3k$, and $|D|\le1849$.
        \item Every vertex in $B_1^{(k)}$ has a neighbor in $K$ and a neighbor in $N$.
        \item Every vertex in $C_1^{(k)}$ has degree at least 5. 
        \item Every vertex in $A_2^{(k)}$ has a neighbor in each of $N_1,\ldots, N_k$. (This is because it has a neighbor in common with each of $x_1,\ldots,x_k$, and this common neighbor cannot be $h_1$.)
        \item Every vertex in $A_{12}^{(k)}$ has two neighbors in $K$.
    \end{enumerate}
    Set $X=A^{(k)}$ and $Y=V\sm A^{(k)}$. Note that $|X|+|N|=|A|\ge v-1849$. For any $x\in X\sm (C_1^{(k)}\cup A_2^{(k)})$, we have $d_Y(x)+\frac{1}{2}d_X(x)\ge 2+\frac{1}{2}\cdot 2=3$; for any $x\in C_1^{(k)}$ we have $d_Y(x)+\frac{1}{2}d_X(x)\ge 1+\frac{1}{2}\cdot 4=3$. Set $m=|A_2\cap N|$, so that $|A_2^{(k)}|=A_2-m$. Since every vertex in $A_2$ has a neighbor in each of $N_1,\ldots,N_k$, every vertex $y\in A_2$ satisfies $d_N(y)\ge k$ and thus $e(N)\ge km/2$. 

    \textit{Case 1:} $k\ge 3$. Then for $x\in A_2^{(k)}$, we have $d_Y(x)+\frac{1}{2}d_X(x)\ge d_Y(x)\ge k+1$.

    Then we have 
    $$\begin{aligned}
        e(G)&=e(X,Y)+e(X)+e(Y)\ge\sum_{x\in X}\left(d_Y(x)+\frac{1}{2}d_X(x)\right)\ge\sum_{x\in X\sm A_2^{(k)}}3+\sum_{x\in A_2^{(k)}}(k+1).
    \end{aligned}$$
Hence,
    $$\begin{aligned}
        e(G)&\ge 3(|X|-|A_2^{(k)}|)+(k+1)|A_2^{(k)}|+\frac{km}{2}\\
        &=3|X|+(k-2)|A_2^{(k)}|+\frac{km}{2}\\
        &=3|X|+(k-2)(|A_2|-m)+\frac{km}{2}.\\
    \end{aligned}$$
\textit{Subcase 1.1:} $m\ge 18$. Then $\frac{km}{2}\ge 9k=3|N|$, and we have
$$3|X|+(k-2)(|A_2|-m)+\frac{km}{2}\ge 3|X|+\frac{km}{2}\ge 3(|X|+|N|)\ge 3v-5547.$$
We are done.

\textit{Subcase 1.2:} $m<18$. Since $|A_2|>26$, we have $|A_2|-m\ge 27-18= 9$. Then
$$e(G)\ge 3|X|+9(k-2)+\frac{km}{2}\ge 3|X|+9k-18=3|X|+3|N|-18=3v-5565$$
and we are done.

\textit{Case 2:} $k\le 2$. Then for $x\in A_2^{(k)}$, we have $d_Y(x)+\frac{1}{2}d_X(x)=(k+1)+\frac{4-(k+1)}{2}\ge 3$, so $d_Y(x)+\frac{1}{2}d_X(x)\ge 3$ holds for all $x\in X$. We have
$$e(G)\ge e(X,Y)+e(X)=\sum_{x\in X}d_Y(x)+\frac{1}{2}d_X(x)\ge 3|X|.$$
Now $|N|=3k\le 6$, and so $3|X|=3(|A|-|N|)\ge 3(v-1849-6)=3v-5565)$ are we are done.
\end{proof}

\subsection{The case $n\ge 3$}

In contrast to the situation with $n=2$, we will show that if $n\ge 3$, then a $v$-vertex $n$-e.c. graph has $\omega(v)$ many edges. Lower bounds on $m(v,n)$ will follow from the connection between $n$-e.c. graphs and $n$\textit{-strong binary covering arrays}. Following the notation of \cite{lawrence2011survey}, we define a \textit{binary $n\times k$ covering array of strength $t$}, or $\mathrm{CA}(n,k,t)$, to be an $n\times k$ $\{0,1\}$-matrix with the property that every $n\times t$ submatrix contains every binary vector of length $t$ as some row. As noted in \cite{colbourn2009binary}, if $G$ is a $v$-vertex $n$-e.c. graph, then the adjacency matrix of $G$ is a $\mathrm{CA}(v,v,n)$. Kleitman and Spencer \cite{kleitman1973families} proved the following bound on the dimensions of a $\mathrm{CA}(n,k,t)$.

\begin{thm}[Kleitman-Spencer \cite{kleitman1973families}] \label{Theorem Kleitman Spencer} Suppose that a $\mathrm{CA}(n,k,t)$ exists. Then $n\ge (c_t+o(1))\log_2k$, where $c_t$ is a constant satisfying
$$c_t\ge\frac{t-2}{(t-1){2^{1-t}}-(1-2^{1-t})\log_2(1-2^{1-t})-2^{2-t}}\sim 2^{t-1}$$
if $t\ge 3$, and $c_t=1$ if $t=2$.
\end{thm}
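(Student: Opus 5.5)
This result is quoted from \cite{kleitman1973families}, and in the paper it is only used as a black box. If I had to prove it, I would reduce from strength $t$ to strength $2$ by restricting to the rows that match a fixed pattern on $t-2$ chosen columns. I would then handle strength $2$ with a Sperner-type argument.

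\emph{Strength 2.} Regard each column of a $\mathrm{CA}(n,k,2)$ as the subset $S_j\subseteq[n]$ of rows where it has a $1$. The covering condition says that for $i\ne j$ the four sets $S_i\cap S_j$, $S_i\sm S_j$, $S_j\sm S_i$ and $[n]\sm(S_i\cup S_j)$ are all nonempty; that is, the family is qualitatively independent. In particular no $S_i$ contains another, and no two $S_i$ are complementary. Pairing each set with its complement and applying Sperner's theorem (Katona's argument via a symmetric chain decomposition) gives $k\le\binom{n-1}{\lfloor n/2\rfloor-1}\le 2^{n}$. Hence $n\ge(1+o(1))\log_2 k$, which is $c_2=1$.

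\emph{Reduction for $t\ge 3$.} Fix $t-2$ columns $j_1,\dots,j_{t-2}$ and a pattern $p\in\{0,1\}^{t-2}$, and let $R_p$ be the set of rows agreeing with $p$ on these columns. Strength $t$ implies that the other $k-t+2$ columns, restricted to $R_p$, form a $\mathrm{CA}(|R_p|,k-t+2,2)$. By the strength-2 case, $|R_p|\ge(1+o(1))\log_2 k$ for every choice of columns and pattern. So it suffices to find columns and a pattern with $|R_p|\le n/c_t$; then $n\ge c_t|R_p|\ge (c_t+o(1))\log_2 k$. The crude version is a greedy halving: choose columns one at a time, keeping the rarer value each time. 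This works as long as no column is nearly constant on the current row set. If some column is nearly constant on a large set, its rare value instead gives a set of size much smaller than half. Either way, after $t-2$ steps one gets $|R_p|\lesssim n2^{2-t}$, which already yields $c_t=\Omega(2^{t})$.

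\emph{Main obstacle.} The hard step is getting the exact constant displayed in Theorem~\ref{Theorem Kleitman Spencer}. Its entropy terms $-(1-2^{1-t})\log_2(1-2^{1-t})$ suggest a weighted, rather than greedy, choice. My first attempt would be to choose the $t-2$ columns and the pattern at random, weighting the pattern by how rare it is. I would then combine the strength-$2$ bound with a counting bound on the number of distinct restricted columns: the restricted columns must be distinct and pairwise qualitatively independent, which limits how many can be nearly constant. Optimizing the parameter in that trade-off should produce the stated $c_t\sim 2^{t-1}$. Since the paper needs only $c_t>0$ with the order $2^{t-1}$, I would be content with the greedy version if the sharp constant resisted this approach.
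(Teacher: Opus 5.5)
\textbf{The proposal proves a strictly weaker constant than the one stated.} The paper gives no proof of this statement (it is quoted from Kleitman--Spencer), so your proposal has to stand on its own.

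What you do prove is correct:
\begin{itemize}
\item For $t=2$, Sperner applied to the antichain of column sets and their complements gives $2k\le\binom{n}{\lfloor n/2\rfloor}$, which is all $c_2=1$ needs. (The exact value $\binom{n-1}{\lfloor n/2\rfloor-1}$ for odd $n$ needs the intersecting-antichain refinement, but you never use it.)
\item The remaining columns restricted to a class $R_p$ do form a strength-$2$ array.
\item The greedy case analysis is unnecessary: for fixed $t-2$ columns the $2^{t-2}$ classes partition the rows, so some class has at most $n2^{2-t}$ rows.
\end{itemize}
This gives only $c_t\ge 2^{t-2}$. Write $h(x)=-x\log_2x-(1-x)\log_2(1-x)$. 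The displayed bound is then $c_t\ge (t-2)/\bigl(h(2^{1-t})-2^{2-t}\bigr)$. This constant is $\sim 2^{t-1}$ and exceeds $2^{t-2}$ for every $t\ge 3$; for $t=3$ it is $1/(\tfrac34\log_2\tfrac43)\approx 3.21$ against your $2$. Your remark that the greedy version has order $2^{t-1}$ is therefore wrong. The closing paragraph about weighted random choices is not an argument. The weaker constant would still suffice for the paper's $m(v,n)=\Theta(v\log v)$ and its $\Theta(n)$ gap, but it is not the theorem.

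\textbf{Why the single-class strategy cannot reach the constant.} The plan ``find one class with $|R_p|\le n/c_t$'' fails on its own. In the full array $\{0,1\}^k$, every class of every $t-2$ columns has exactly $n2^{2-t}$ rows. The missing ingredient is a global count over all $\binom{k}{t-2}$ column sets. One such argument, which I checked, runs as follows.
\begin{itemize}
\item Call $X\subseteq R_p$ good for $(S,p)$ if every column outside $S$ is nonconstant on $X$. Then $S$ is exactly the set of columns constant on $X$, and $p$ records their values. So good sets for different $(S,p)$ are distinct, and $\sum_{(S,p)}\sum_{X\text{ good}}\binom{n}{|X|}^{-1}\le n+1$.
\item Let $m=|R_p|$. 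The traces on $R_p$ of the remaining columns, together with their complements in $R_p$, form a complement-closed antichain $\mathcal G$; this is your strength-$2$ property. A set $X$ is bad exactly when it lies inside a member of $\mathcal G$.
\item The down-set and up-set of $\mathcal G$ meet only in $\mathcal G$ and are swapped by complementation. Hence at least $\tfrac12\bigl(\binom mx-|\mathcal G_x|\bigr)$ sets of size $x$ or $m-x$ are good, where $\mathcal G_x$ is the set of members of size $x$.
\item By LYM for $\mathcal G$, one of $x=\lfloor m/2\rfloor-1$ or $x=\lfloor m/2\rfloor$ has $|\mathcal G_x|\le\tfrac12\binom mx$. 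This yields at least $2^m/\mathrm{poly}(m)$ good sets of size $m/2+O(1)$.
\item Consequently $\sum_{(S,p)}2^{g(|R_p|)}\le\mathrm{poly}(n)$, where $g(m)=m-n\,h(m/(2n))$.
\item Since $g$ is convex and $\sum_p|R_p|=n$, Jensen gives $\binom{k}{t-2}2^{t-2}\le\mathrm{poly}(n)\,2^{n(h(2^{1-t})-2^{2-t})}$. That is, $(t-2)\log_2k\le n\bigl(h(2^{1-t})-2^{2-t}\bigr)+O(\log n)$, which is exactly the stated $c_t$.
\end{itemize}
The gain over your bound is that each class supplies about $2^{|R_p|}$ distinct codewords for $(S,p)$, rather than only the $k$ column traces.
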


\begin{thm} \label{Theorem n>2 lower bound}
    If $n\ge 3$, we have $m(v,n)\ge (c_{n-1}+o(1))v\log_2 v.$
\end{thm}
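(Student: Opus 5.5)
The plan is to realise a covering array inside the neighbourhood of each vertex, apply Theorem~\ref{Theorem Kleitman Spencer} to get a lower bound on every degree, and sum over vertices. I can complete this for half the stated constant; the missing factor of $2$ is the real difficulty, discussed at the end.

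\textbf{Step 1: an array inside each neighbourhood.} Let $G$ be a $v$-vertex $n$-e.c.\ graph with $n\ge 3$, and fix $x\in V(G)$. Form the $\{0,1\}$-matrix $M_x$ with rows indexed by $N(x)$, columns indexed by $V\sm\{x\}$, and $(z,y)$-entry equal to $1$ if and only if $z\sim y$.

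\textbf{Step 2: $M_x$ is a $\mathrm{CA}(d(x),v-1,n-1)$.} Take any distinct columns $y_1,\ldots,y_{n-1}$ and any target pattern, i.e.\ a partition $\{y_1,\ldots,y_{n-1}\}=A\sqcup B$. Apply the $n$-e.c.\ property to the $n$-set $\{x,y_1,\ldots,y_{n-1}\}$ with $x$ placed on the adjacent side. This gives a vertex $z\notin\{x,y_1,\ldots,y_{n-1}\}$ with $z\sim x$, adjacent to all of $A$ and none of $B$. Since $z\in N(x)$, the row of $M_x$ indexed by $z$ realises the pattern.

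\textbf{Step 3: degree bound and summation.} Theorem~\ref{Theorem Kleitman Spencer} with $t=n-1\ge 2$ now gives
\[
d(x)\ge (c_{n-1}+o(1))\log_2(v-1)=(c_{n-1}+o(1))\log_2 v ,
\]
uniformly in $x$, because the $o(1)$ depends only on $v$. Summing over all vertices,
\[
2e(G)=\sum_{x}d(x)\ge (c_{n-1}+o(1))\,v\log_2 v .
\]

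\textbf{Main obstacle: the factor of $2$.} The summation above only gives $e(G)\ge \tfrac12(c_{n-1}+o(1))v\log_2 v$, so this route proves $m(v,n)=\Omega(v\log v)$ but not the stated constant. To reach $c_{n-1}$ one would have to show that the average degree is at least $2(c_{n-1}+o(1))\log_2 v$.

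The first thing I would try is to exploit the freedom in Step~2: the rows realising patterns on $y_1,\ldots,y_{n-1}$ can be found in $N(x)$ for every choice of $x$ outside those columns. Suppose $e(G)<(c_{n-1}-\varepsilon)v\log_2 v$. Then a positive proportion of vertices have degree below $2(c_{n-1}-\varepsilon)\log_2 v$. Among these, look for a pair $x,x'$ whose neighbourhoods have small overlap. The rows in $N(x)\sm N(x')$ and those in $N(x')\sm N(x)$ would each have to contain a near-covering array, and the aim would be a contradiction via a refined Kleitman--Spencer count.

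I am not confident this works. Two fallbacks:
\begin{itemize}
\item If the authors' argument is just Steps~1--3, then the constant in the statement should be $c_{n-1}/2$, or equivalently the statement should be read as a bound on $2e(G)$.
\item Otherwise, the stated constant would follow from a stronger covering-array input: one realising a pattern on $n$ columns that include $x$ itself, which costs the columns-versus-rows symmetry used above.
\end{itemize}
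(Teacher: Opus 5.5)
Your Steps 1--3 are the paper's argument. The paper takes a minimum-degree vertex $x$, notes that the rows of the adjacency matrix indexed by $N(x)$ form a covering array of strength $n-1$, and gets $\delta\ge(c_{n-1}+o(1))\log_2 v$ from Theorem~\ref{Theorem Kleitman Spencer}. It then says ``the result follows.''

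Your diagnosis is correct: since $e(G)=\tfrac12\sum_x d(x)\ge v\delta/2$, that argument only yields $(\tfrac12 c_{n-1}+o(1))v\log_2 v$. So the paper's written proof has exactly the factor-$2$ gap you found, and your first fallback describes it accurately. Your use of the columns $V\setminus\{x\}$ is also the right choice, since column $x$ is constant on the rows $N(x)$.

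As a proof of the stated constant, your proposal is therefore incomplete. The idea you suggest for closing the gap (pairs $x,x'$ with small neighbourhood overlap) is not the missing ingredient.

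The stated constant is nevertheless true. The missing idea is a high/low degree split, like the one the paper uses for $n=2$. Edges from a low-degree vertex to high-degree vertices are counted only once, and the covering array can be found inside the high-degree part of the neighbourhood alone.

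Concretely, if $e(G)\ge c_{n-1}v\log_2 v$ there is nothing to prove, so assume otherwise. Set $D=(\log_2 v)^2$, $H=\{x:d(x)\ge D\}$ and $L=V\setminus H$; then $|H|\le 2e(G)/D=o(v)$.

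Fix $\ell\in L$. Let $U=\bigcup_{z\in N_L(\ell)}N(z)$, so $|U|<D^2$, and let $W=V\setminus(U\cup\{\ell\})$. Take distinct $w_1,\dots,w_{n-1}\in W$ and a nonempty $A\subseteq\{w_1,\dots,w_{n-1}\}$. The $n$-e.c.\ witness $z$ for $\{\ell\}\cup A$ against the rest is adjacent to $\ell$ and has a neighbour in $W$. Hence $z\notin N_L(\ell)$, that is, $z\in N_H(\ell)$.

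Thus the rows $N_H(\ell)$, restricted to the columns $W$ and together with one artificial all-zero row, form a $\mathrm{CA}(d_H(\ell)+1,|W|,n-1)$. Since $|W|\ge v-D^2-1$, Theorem~\ref{Theorem Kleitman Spencer} gives $d_H(\ell)\ge(c_{n-1}+o(1))\log_2 v$, uniformly in $\ell$. Therefore
\[
e(G)\ge e(L,H)=\sum_{\ell\in L}d_H(\ell)\ge(1-o(1))\,v\,(c_{n-1}+o(1))\log_2 v ,
\]
which is the bound as stated.
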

\begin{proof}
    Suppose that $G=(V,E)$ is an $n$-e.c. graph with minimum degree $\delta$. Let $x$ be a vertex of degree $\delta$. Let $A$ be the adjacency matrix of $G$ and let $B$ be the submatrix of $A$ consisting of the rows indexed by vertices in $N(x)$. We claim that $B$ is a $\mathrm{CA}(\delta,v,n-1)$. Let $x_1,\ldots,x_k,x_{k+1},\ldots,x_{n-1}$ be $n-1$ distinct vertices in $V$. Since $G$ is $n$-e.c., we have $N(x,x_1,\ldots,x_k,\overline{x_{k+1}},\ldots,\overline{x_{n-1}})\ne\emptyset$, so let $y$ be an element of this set. Then the row of $A$ corresponding to $y$ is also a row of $B$, and it contains 1's in the columns corresponding to $x_1,\ldots,x_k$ and 0's in the columns corresponding to $x_{k+1},\ldots,x_{n-1}$. Since the choice of $x_1,\ldots,x_n$ and $k$ were arbitrary, every binary vector of length $n-1$ can be obtained in this way, proving the claim. Since $n-1\ge 2$, Theorem \ref{Theorem Kleitman Spencer}, gives $\delta\ge (c_{n-1}+o(1))\log_2v$, and the result follows.
\end{proof}

For upper bounds, we are not able to rely directly on the connection to covering arrays. The natural correspondence would be as follows. Let $B$ be a $k\times v$ $n$-strong covering array. Then define a graph on vertex set $V=K\sqcup U$, where $K$ is a set of size $k$ and $U$ is a set of size $v-k$. We identify $K$ with the rows of $B$ and $V$ with the columns of $B$. A vertex $v$ is adjacent to the vertices in $K$ corresponding to the 1's in the column corresponding to $v$ (and so $U$ is an independent set). Unfortunately, this construction may yield directed edges within $K$. We can avoid this if $B$ contains a symmetric $k\times k$ submatrix, but it is not clear to us that this is necessarily the case. (It may be an interesting problem to find conditions under which $B$ necessarily contains a symmetric $k\times k$ submatrix.) Alternatively, we could delete the edges within $K$ and replace them by a random graph. However, this may break the $n$-e.c. condition for sets $x_1,\ldots,x_n$ which intersect both $K$ and $U$. 

To prove an upper bound on $m(v,n)$, we will apply the Lov\'asz Local Lemma directly to a random graph construction, rather than going through the intermediate step of obtaining a covering array. (The Local Lemma was used to obtain bounds on the parameters of a $\mathrm{CA}(n,k,t)$ in \cite{Godbole_Skipper_Sunley_1996}.)

\begin{thm}[Erd\H{o}s-Lov\'asz 1975 \cite{erdHos1975problems}] \label{Theorem LLL}
    Let $E_1,\ldots,E_n$ be events in a probability space. Suppose that $\mathbb P[E_i]\le p$ for all $i$, and each $E_i$ is mutually independent of all but at most $d$ other events $E_j$. If $ep(d+1)\le 1$, then $\mathbb P[\bigcap_{i=1}^nE_i]>0$.
\end{thm}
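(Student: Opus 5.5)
The statement as written should read $\mathbb P[\bigcap_{i=1}^n\overline{E_i}]>0$. That is the symmetric Local Lemma, and it is how it will be used: the bad events are avoided simultaneously. I would prove it by the standard conditional-probability induction. If $d=0$ the events are mutually independent, and $\mathbb P[\bigcap\overline{E_i}]=\prod(1-\mathbb P[E_i])\ge(1-p)^n>0$ since $p\le 1/e<1$. So assume $d\ge 1$. For each $i$, let $D_i$ be the set of at most $d$ indices $j$ on which $E_i$ may depend, so that $E_i$ is mutually independent of $\{E_j:j\notin D_i\cup\{i\}\}$.

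The key claim is that for every $S\subseteq[n]$ with $i\notin S$ and $\mathbb P[\bigcap_{j\in S}\overline{E_j}]>0$,
$$\mathbb P\Bigl[E_i\,\Big|\,\bigcap_{j\in S}\overline{E_j}\Bigr]\le\frac{1}{d+1}.$$
I would prove this by induction on $|S|$. Split $S=S_1\sqcup S_2$ with $S_1=S\cap D_i$ and $S_2=S\sm D_i$. If $S_1=\emptyset$, mutual independence gives that the conditional probability equals $\mathbb P[E_i]\le p\le 1/(e(d+1))$. Otherwise write
$$\mathbb P\Bigl[E_i\,\Big|\,\bigcap_{S}\overline{E_j}\Bigr]=\frac{\mathbb P\bigl[E_i\cap\bigcap_{S_1}\overline{E_j}\,\big|\,\bigcap_{S_2}\overline{E_j}\bigr]}{\mathbb P\bigl[\bigcap_{S_1}\overline{E_j}\,\big|\,\bigcap_{S_2}\overline{E_j}\bigr]}.$$
The numerator is at most $\mathbb P[E_i\mid\bigcap_{S_2}\overline{E_j}]=\mathbb P[E_i]\le p$, again by mutual independence.

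For the denominator, list $S_1=\{j_1,\ldots,j_r\}$ with $r\le d$. Expanding by the chain rule and applying the induction hypothesis to each factor, which conditions on a strictly smaller set than $S$, gives a lower bound of $(1-\frac{1}{d+1})^r\ge(1-\frac{1}{d+1})^d\ge 1/e$. The ratio is therefore at most $ep\le\frac1{d+1}$, which closes the induction. The positivity hypotheses needed for conditioning follow along the way from the same chain-rule lower bounds.

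Finally, I would apply the chain rule once more:
$$\mathbb P\Bigl[\bigcap_{i=1}^n\overline{E_i}\Bigr]=\prod_{i=1}^n\Bigl(1-\mathbb P\Bigl[E_i\,\Big|\,\bigcap_{j<i}\overline{E_j}\Bigr]\Bigr)\ge\Bigl(1-\frac1{d+1}\Bigr)^n>0.$$
Each factor is positive by the claim, and this also justifies inductively that every conditioning event has positive probability.

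The main obstacle is the bookkeeping in the inductive step. One must condition on $S_2$ and use mutual independence for the numerator, and check that $(1-\frac1{d+1})^d\ge 1/e$ is exactly the factor that the hypothesis $ep(d+1)\le1$ absorbs. Everything else is routine.
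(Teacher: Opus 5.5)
Your proof is correct, and your correction of the statement is the right one. As printed, the conclusion $\mathbb P[\bigcap_i E_i]>0$ already fails for $E_1=\emptyset$. The intended form is either yours or the equivalent one with hypothesis $\mathbb P[\overline{E_i}]\le p$ and conclusion $\mathbb P[\bigcap_i E_i]>0$; these agree because mutual independence is preserved under complementation. The second form is how the paper actually uses the result, for the ``good'' events $E_{S,A,B}$ and $E_y$ in the upper bound on $m(v,n)$ for $n\ge 3$.

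The paper gives no proof of this theorem, only a citation, so there is no argument of its own to compare with. What you wrote is the standard conditional-probability induction, i.e.\ the symmetric specialization of the Alon--Spencer proof of the general Local Lemma. The details check out:
\begin{itemize}
\item Each conditioning set $S_2\cup\{j_1,\ldots,j_{t-1}\}$ in your chain rule is a proper subset of $S$ not containing $j_t$, so the induction hypothesis applies.
\item Positivity of every conditioning event is inherited by monotonicity from $\mathbb P[\bigcap_{j\in S}\overline{E_j}]>0$.
\item Your separate treatment of $d=0$ is genuinely needed, since $(1-\frac{1}{d+1})^n=0$ there.
\end{itemize}

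It is worth noting what your chain-rule bound buys. The lower bound $(1-\frac1{d+1})^d\ge e^{-1}$ on the denominator is exactly what yields the condition $ep(d+1)\le 1$. If you bound the denominator by a union bound instead, the same induction gives the claim with $2p$ in place of $\frac1{d+1}$, but only under the weaker condition $4pd\le 1$. That weaker form is the one in the cited 1975 paper of Erd\H{o}s and Lov\'asz; the $e$-version is the later refinement usually credited to Lov\'asz via Spencer (1977).
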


\begin{thm} \label{n>2 upper bound}
    For every $n\ge 3$, we have $m(v,n)\le[(1+2^{(3-n)/2})(n-1)2^{n-1}+o(1)]v\log v.$
\end{thm}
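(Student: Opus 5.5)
The plan is to use a random ``hub'' construction and verify the $n$-e.c.\ property with the Local Lemma (Theorem~\ref{Theorem LLL}), as announced before the statement. Let $k=\lceil (1+\varepsilon)(n-1)2^{n}\ln v\rceil$, with $\varepsilon=2^{(3-n)/2}$ or some slightly adjusted constant. Take $V=K\sqcup U$ with $|K|=k$ and $|U|=v-k$, and make $U$ an independent set. Join each pair $u\in U$, $w\in K$ independently with probability $1/2$, and let $G[K]$ be an independent copy of $G_{k,1/2}$. The expected number of edges is $\tfrac12 k(v-k)+\tfrac14 k^2=(\tfrac12+o(1))kv$, which is $\bigl[(1+\varepsilon)(n-1)2^{n-1}+o(1)\bigr]v\ln v$. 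A Chernoff bound (or a union bound over the LLL-good outcomes, since the edge count is concentrated) lets us also require $e(G)$ to be at most $(1+o(1))$ times this expectation.

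For every $n$-set $S$ and every partition $S=A\sqcup B$, let $E_{S,A}$ be the event that no vertex of $K\setminus S$ is adjacent to all of $A$ and none of $B$. We only look for witnesses inside $K$, which automatically lie outside $S$. For each $w\in K\setminus S$, the $n$ pairs $\{w,s\}$ with $s\in S$ are distinct edges or non-edges, each present independently with probability $1/2$. Witnesses $w$ for the same $(S,A)$ use disjoint sets of pairs. Hence
\[
\mathbb P[E_{S,A}]=(1-2^{-n})^{|K\setminus S|}\le \exp\bigl(-2^{-n}(k-n)\bigr)=:p .
\]
With our choice of $k$ this gives $p\le v^{-(1+\varepsilon)(n-1)+o(1)}$.

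Next I bound the dependencies. $E_{S,A}$ is determined by the pairs between $S$ and $K\setminus S$. Two events can be dependent only if they share such a pair, which forces $S\cap S'\ne\emptyset$ or $S\cap K$ and $S'$ to interact. In every case $S'$ must contain one of the at most $n+k$ vertices relevant to $S$. So there are $d\le (n+k)\binom{v}{n-1}2^n=O(v^{n-1}\log v)$ such events. The LLL condition $ep(d+1)\le1$ then reads $v^{-(1+\varepsilon)(n-1)+o(1)}\cdot v^{n-1+o(1)}\le 1$, which holds for large $v$. Therefore, with positive probability no event $E_{S,A}$ occurs and $G$ is $n$-e.c.

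The main obstacle is making the constant exactly the stated $(1+2^{(3-n)/2})$ rather than an arbitrary $1+\varepsilon$. This requires combining the LLL with the requirement that the edge count be near its mean, and handling sets $S$ that meet $K$. For those sets, the pairs inside $K$ are shared with many other events, so both the probability bound and the dependency count need care. I would first try to run the LLL with $k$ as small as the computation allows and add a separate concentration event for $e(G)$. If the dependency count from $K$-internal pairs forces extra room, I would absorb it into the factor $2^{(3-n)/2}$ by choosing the edge probability between $U$ and $K$, or the size of $K$, to balance the two error terms.
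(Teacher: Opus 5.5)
Your random graph is exactly the paper's $G_{v,t}$. Your bad events, the bound $(1-2^{-n})^{k-n}$, and the dependency count are correct. In particular, a shared pair forces $S'\cap(S\cup K)\neq\emptyset$, and this already accounts for the pairs inside $K$. So the worry in your last paragraph about sets meeting $K$ is unfounded.

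The genuine gap is the edge count, which is the whole content of the statement. Theorem~\ref{Theorem LLL} only says the good event has positive probability, and that probability may be exponentially small in $v$. So knowing that $e(G)$ is concentrated does not by itself give a graph that is both $n$-e.c.\ and sparse. Nor can $e(G)$ be added as one more event in the symmetric LLL, since it depends on every random pair. Your sentence that one ``can also require'' $e(G)\le(1+o(1))\mathbb E[e(G)]$ is therefore unproved, as your final paragraph essentially concedes.

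The missing step is a quantitative comparison, and it is easy to supply.
\begin{itemize}
\item Taking $x_i=1/(d+1)$ in the proof of the symmetric LLL gives $\mathbb P[\text{no bad event}]\ge(1-\frac{1}{d+1})^N\ge e^{-N/d}$.
\item With $N=2^n\binom{v}{n}$ and $d=(n+k)2^n\binom{v}{n-1}$, this is at least $e^{-v/(nk)}=e^{-O(v/\log v)}$.
\item With $M=\binom{k}{2}+k(v-k)$, Chernoff gives $\mathbb P[e(G)\ge(1+\eta)M/2]\le e^{-\eta^2M/6}=e^{-\Omega(\eta^2 v\log v)}$.
\item This is smaller than the LLL bound for every fixed $\eta>0$, so both hold with positive probability.
\end{itemize}
Alternatively, flip every random pair. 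This preserves the measure and the good event, since a witness for $(A,B)$ becomes a witness for $(B,A)$. It also maps $e(G)$ to $M-e(G)$, so some good outcome has $e(G)\le M/2$.

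Once this is filled in, your route genuinely differs from the paper's. The paper controls edges locally: for each $y\in Y$ it adds to the LLL the event $d_X(y)>(1+\varepsilon)t/2$. These Chernoff tails must lie below $p\approx v^{-(n-1)}$, so $\varepsilon$ has to be a constant, and that is exactly where the factor $1+2^{(3-n)/2}$ comes from.

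Global concentration needs no such slack. Your choice of $k$ already gives the statement. Since the LLL works with $k=(1+o(1))(n-1)2^n\ln v$, your route in fact yields the stronger bound $m(v,n)\le((n-1)2^{n-1}+o(1))v\ln v$. So there is no need to match the stated constant ``exactly.''

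Changing the edge probability would only hurt. For $q\neq\frac12$ the worst partition has witness probability $\min(q,1-q)^n<2^{-n}$.
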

\begin{proof}

For $v,t\in\mathbb N$ with $t\le v$, we define a random graph $G=G_{v,t}$ as follows. Let $G=(V,E)$ with $V=X\sqcup Y$, with $|X|=t$ and $|Y|=v-t$. For $x\in X$ and $z\in V\sm\{x\}$, we set $\mathbb P[xz\in E]=\frac{1}{2}$ with distinct edges being independent. Thus, $G$ is a random subgraph of the join of a clique on $X$ with an independent set on $Y$.

Fix $\delta>0$. We will show that, with positive probability, $G$ is $n$-e.c. and $e(G)\le(1/2+2^{(1-n)/2})vt.$ For $S\subseteq V$ with $|S|=n$ and $A\sqcup B=S$, define the event
$$E_{S,A,B}=\{N(A,\overline B)\cap X\ne \emptyset\}.$$
Furthermore, for $\varepsilon>0$ and $y\in Y$, define the event
$$E_y=\{d_Y(x)>(1+\varepsilon)t/2\}.$$
We bound the probabilities of the events $\overline{E_{S,A,B}}$ and $\overline{E_y}$. For $(S,A,B)$, note that there exist at least $t-n$ vertices in $X\sm S$, and all the edges between $S$ and $X\sm S$ are randomly chosen with probability $\frac{1}{2}$. Thus,
$$\mathbb P\left[\overline{E_{S,A,B}}\right]\le\left(1-2^{-n}\right)^{t-n}\le e^{-2^{-n}(t-n)}.$$
For $x\in X$, note that $d_Y(x)\sim \mathrm{Bin}(t,1/2)$. The Chernoff bound \cite{chernoff1952measure} gives
$$\mathbb P\left[\overline{E_y}\right]\le e^{-\varepsilon^2 (t/2)/(2+\varepsilon)}.$$
If we choose $\varepsilon=2^{(3-n)/2}$, then we have $e^{-\varepsilon^2(t/2)/(2+\varepsilon)}\le e^{-2^{-n}(t-n)}$. Thus, we set $p=e^{-2^{-n}(t-n)}$. 

We now construct a dependency digraph for the events in $\mathcal E$. Consider an event of the form $E_{S,A,B}$. The event $E_{S,A,B}$ is mutually independent of all events in $\mathcal E_{S,A,B}=\{E_{S',A',B'}:S'\subseteq Y,S'\cap S=\emptyset,A'\sqcup B'=S'\}$; for the events in $\mathcal E_{S,A,B}$ depend only on the edges in $E(X,S')$, which is disjoint from $E(X,S)$. It follows that $E_{S,A,B}$ is mutually independent of all but 
\begin{equation} \label{Equation LLL d}
\left[\binom{v-t}{n}-\binom{v-t-n}{n}\right]2^n+\left[\binom{v}{n}-\binom{v-t}{n}\right]2^n+v-t
\end{equation}
events in $\mathcal E$. Now consider an event of the form $E_y$. Since $y\in Y$, $E_y$ is mutually independent of all events in $\mathcal E_x=\{E_{S,A,B}:x\not\in S,A\sqcup B=S\}.$ Obviously $|\mathcal E_x|\ge|\mathcal E_{S,A,B}|$, so we set
$d$ equal to the expression in Equation \ref{Equation LLL d}. We estimate $d$ as follows:
$$\begin{aligned}
    d&=\left(\frac{n^2v^{n-1}+O(v^{n-2})}{n!}\right)2^n+\left(\frac{ntv^{n-1}+O(v^{n-2})}{n!}\right)2^n+v-t\\
    &=\left(\frac{n^2+nt}{n!}\right)2^nv^{n-1}+O(v^{n-2})=(2^n+o(1))\frac{tv^{n-1}}{(n-1)!}.
\end{aligned}$$
To apply Theorem \ref{Theorem LLL} we require
$$(2^n+o(1))ee^{-2^{-n}(t-n)}\frac{tv^{n-1}}{(n-1)!}<1$$
which holds if we choose $t=\lceil(2^n(n-1)+\delta)\log v\rceil$. So, with positive probability, none of the events in $\mathcal E$ occur. This implies that $G$ is $n$-e.c. and
$$\begin{aligned}e(G)&\le e(X)+e(X,Y)\le\binom{t}{2}+t(1/2+\varepsilon/2)(v-t)\\
&\le t^2+\left(\frac{1}{2}+2^{(1-n)/2}\right)(2^n(n-1)+2\delta)v\log\le \left[\left(\frac{1}{2}+2^{(1-n)/2}\right)2^n(n-1)+3\delta\right] v\log v.\end{aligned}$$
Taking $\delta\to 0$ proves the result.
\end{proof}

Comparing Theorems \ref{Theorem n>2 lower bound} and \autoref{n>2 upper bound}, we see that the lower and upper bounds on $m(v,n)$ differ by a $\Theta(n)$ factor, as $n\to\infty$.

\section{VC-dimension}

For a graph $G$, define the \textit{VC-dimension} of $G$ to be the VC-dimension of the set system $\{N(u):u\in V(G)\}$. Some authors have also considered the closed-neighborhood set system $\{N[u]:u\in V(G)\}$, but we will restrict our attention to the open-neighborhood version. In this section, we investigate the VC-dimension of strongly regular graphs. First, we consider general lower and upper bounds, before investigating the VC-dimension of specific families of SRGs.

We note that, in contrast with $n$-existential closure, the VC-dimension of $G$ is not necessarily equal to the VC-dimension of $\overline G$, since $N_{\overline G}(x)\ne V(G)\sm N_G(x)$. This occasionally leads to nearly identical proofs involving a graph and its complement, which nevertheless cannot be condensed to a single case.

Our focus is on the VC-dimension of strongly regular graphs, defined as follows.

\begin{defn} Let $G$ be a graph on $v$ vertices. We say that $G$ is \textit{strongly regular} if there exist integers $k,\lambda,\mu$ such that
\begin{enumerate}
    \item $d(x)=k$ for all $x\in V$;
    \item $d(x,y)=\lambda$ for all $x,y\in V$ with $x\sim y$;
    \item $d(x,y)=\mu$ for all distinct $x,y\in V$ with $x\not\sim y$.
\end{enumerate}
We say that $G$ is an $\mathrm{SRG}(v,k,\lambda,\mu)$. If either $G$ or $\overline{G}$ is disconnected, we say that $G$ is \textit{imprimitive}; otherwise, it is \textit{primitive.}
\end{defn}

\subsection{General bounds}

\begin{lemma}[see \cite{brouwer2022strongly}, Section 1.1.1] \label{Lemma SRG equation}
If $G$ is an $\mathrm{SRG}(v,k,\lambda,\mu)$, then $k(k-\lambda-1)=\mu(v-1-k).$
\end{lemma}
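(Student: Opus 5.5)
The plan is to prove the identity $k(k-\lambda-1)=\mu(v-1-k)$ by double counting. Fix a vertex $x\in V(G)$ and count the ordered pairs $(y,z)$ with $y\sim x$, $z\not\sim x$, $z\ne x$, and $y\sim z$. In the paper's notation these are the edges between $N(x)$ and the set $N_2'(x):=V\sm(\{x\}\cup N(x))$ of non-neighbours of $x$. So we will evaluate $e(N(x),V\sm(\{x\}\cup N(x)))$ in two ways.

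First count from the side of $N(x)$. Each $y\in N(x)$ has exactly $k$ neighbours in total. One of them is $x$ itself. Since $y\sim x$, exactly $d(x,y)=\lambda$ of them lie in $N(x)$. The remaining $k-1-\lambda$ neighbours of $y$ are non-neighbours of $x$ distinct from $x$. Summing over the $k$ choices of $y$ gives $k(k-\lambda-1)$.

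Next count from the other side. The set $V\sm(\{x\}\cup N(x))$ has $v-1-k$ elements. Each such $z$ satisfies $z\ne x$ and $z\not\sim x$, so it has exactly $d(x,z)=\mu$ common neighbours with $x$. Equivalently, $z$ has exactly $\mu$ neighbours in $N(x)$. Summing gives $\mu(v-1-k)$, and equating the two counts yields the lemma.

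There is no real obstacle here. The only point needing care is the bookkeeping in the first count: $y$'s neighbour $x$ must be excluded, as must its $\lambda$ neighbours inside $N(x)$, which gives the term $k-\lambda-1$. We also need that the definition's $d(x,y)$ counts common neighbours other than $x,y$, which matches the paper's convention for $N(\cdot)$.
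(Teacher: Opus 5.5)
Your double count of the edges between $N(x)$ and the $v-1-k$ non-neighbours of $x$ is correct, including the bookkeeping that gives $k-\lambda-1$ such edges per neighbour and $\mu$ per non-neighbour. The paper gives no proof of its own, only the citation to Brouwer--Van Maldeghem, and your argument is the standard one given there.
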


\begin{lemma}[see \cite{brouwer2022strongly}, Section 1.1.2] \label{Lemma SRG complement}
    If $G$ is an $\mathrm{SRG}(v,k,\lambda,\mu)$, then $\overline G$ is an $\mathrm{SRG}(v,\overline k,\overline\lambda,\overline\mu)$ with
    $$\begin{aligned}
        \overline k&=v-k-1,\\
        \overline \lambda&=v-2k+\mu-2,\\
        \overline\mu&=v-2k+\lambda.
    \end{aligned}$$
\end{lemma}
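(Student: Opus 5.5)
The statement to prove is Lemma~\ref{Lemma SRG complement}: the complement of an $\mathrm{SRG}(v,k,\lambda,\mu)$ is strongly regular with the stated parameters. The plan is direct counting with inclusion--exclusion in $G$. We verify the three defining conditions for $\overline G$ one at a time, using only that $G$ is regular of degree $k$ and that common neighbourhoods have size $\lambda$ or $\mu$.

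\textbf{Regularity.} A vertex $x$ is non-adjacent in $G$ to every vertex other than itself and its $k$ neighbours. Hence $\overline k=v-1-k$.

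\textbf{Common non-neighbours, general formula.} Take distinct $x,y$ and count the vertices $z\notin\{x,y\}$ with $z\not\sim x$ and $z\not\sim y$ in $G$; these are exactly the common neighbours of $x,y$ in $\overline G$. Start from the $v-2$ vertices other than $x,y$. Remove those adjacent to $x$: there are $k-[x\sim y]$ of them outside $\{x,y\}$. Remove those adjacent to $y$: likewise $k-[x\sim y]$. Add back the common neighbours $d(x,y)$, which were removed twice. This gives
\[
v-2-2\bigl(k-[x\sim y]\bigr)+d(x,y).
\]

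\textbf{Specialising.} If $x\sim y$ in $\overline G$, then $x\not\sim y$ in $G$ and $d(x,y)=\mu$. The count is $v-2k+\mu-2=\overline\lambda$. If $x\not\sim y$ in $\overline G$, then $x\sim y$ in $G$ and $d(x,y)=\lambda$. The count is $v-2-2(k-1)+\lambda=v-2k+\lambda=\overline\mu$.

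\textbf{Where care is needed.} The only delicate point is the correction term: when $x\sim y$ in $G$, the vertex $y$ lies in $N(x)$, and vice versa, so it must be excluded from the count of vertices adjacent to $x$ outside $\{x,y\}$. Apart from this bookkeeping the argument is routine, and no earlier result from the paper is needed.
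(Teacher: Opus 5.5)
Your proof is correct: the inclusion--exclusion count, with the $[x\sim y]$ correction for $y\in N(x)$, gives $\overline k=v-k-1$, $\overline\lambda=v-2k+\mu-2$ and $\overline\mu=v-2k+\lambda$ exactly as stated. The paper does not prove this lemma itself but cites \cite{brouwer2022strongly}, where the standard argument is this same direct count, so your approach matches.
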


\begin{lemma}[see \cite{brouwer2022strongly}, Section 1.1.3] \label{Lemma SRG positive}
    Suppose that $G$ is a primitive $\mathrm{SRG}(v,k,\lambda,\mu)$. Then the quantities $\mu,\overline\mu,k-\lambda-1,\overline k-\overline\lambda-1,k-\mu$, and $\overline k-\overline\mu$ are all positive.
\end{lemma}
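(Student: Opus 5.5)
Since $G$ is primitive, both $G$ and $\overline G$ are connected. I will prove the six positivity claims in pairs, using the complement formulas of Lemma \ref{Lemma SRG complement} to move each statement between $G$ and $\overline G$. Because $\overline G$ is again a primitive SRG, anything proved for $G$ also holds for $\overline G$. So it suffices to show $\mu>0$, $k-\lambda-1>0$ and $k-\mu>0$ for every primitive SRG; the barred versions then follow by applying the same argument to $\overline G$.

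\emph{Step 1: $\mu>0$.} Since $\overline G$ is connected, $G$ is not complete, so there is a pair of non-adjacent vertices. Suppose $\mu=0$. Then any two vertices at distance $2$ would have a common neighbour, contradicting $\mu=0$, so no two vertices of $G$ are at distance $2$. Since $G$ is connected, all distances are then at most $1$, so $G$ is complete, a contradiction. (Strictly, $G$ connected and not complete forces a pair at distance exactly $2$, which has a common neighbour, so $\mu\ge1$.)

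\emph{Step 2: $k-\lambda-1>0$.} Apply Lemma \ref{Lemma SRG equation}: $k(k-\lambda-1)=\mu(v-1-k)$. We have $v-1-k>0$ because $G$ is not complete, and $\mu>0$ by Step 1. Hence the right side is positive, forcing $k>0$ and $k-\lambda-1>0$. Equivalently, if $k-\lambda-1=0$ then every closed neighbourhood would be a clique component, so $G$ would be a disjoint union of cliques and hence disconnected or complete.

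\emph{Step 3: $k-\mu>0$.} This is the step needing a little more care. By Lemma \ref{Lemma SRG complement}, $\overline\mu=v-2k+\lambda$, and Step 1 applied to $\overline G$ gives $\overline\mu>0$. Suppose for contradiction that $\mu\ge k$. Since $\mu\le k$ trivially, we get $\mu=k$, so any two non-adjacent vertices have identical neighbourhoods. Non-adjacency is then an equivalence relation: if $x\not\sim y$ and $y\not\sim z$ with $x\ne z$, then $N(x)=N(y)=N(z)$, and $x\sim z$ would force $z\in N(x)=N(z)$, which is absurd. Therefore $\overline G$ is a disjoint union of cliques. Since $\overline G$ is connected, it is a single clique, so $G$ is empty. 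But then $G$ is disconnected (as $v\ge2$), contradicting primitivity. Hence $k>\mu$. Running the same argument on $\overline G$ gives $\overline k>\overline\mu$, and the remaining barred claims follow from Steps 1 and 2 applied to $\overline G$.

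The only point requiring thought is Step 3, which is handled by the "non-adjacency is an equivalence relation" argument. Everything else is a direct consequence of connectivity of $G$ and $\overline G$ together with Lemma \ref{Lemma SRG equation}.
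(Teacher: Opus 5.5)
Your proof is correct, but Step~3 takes a longer route than necessary. The paper gives no proof of its own here; it only cites Brouwer--Van Maldeghem. The reasoning it uses in the proof of Proposition~\ref{Proposition primitive SRG 2} gets $\mu>0$ exactly as your Step~1 does. It then gets $\mu<k$ from the complement formulas of Lemma~\ref{Lemma SRG complement}: $\overline k-\overline\lambda-1=(v-k-1)-(v-2k+\mu-2)-1=k-\mu$. So $k-\mu>0$ is literally the statement $\overline\lambda<\overline k-1$ for the primitive graph $\overline G$. Similarly, $\overline k-\overline\mu=k-\lambda-1$.

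The six quantities are therefore only four distinct ones. Once you apply Steps~1 and~2 to both $G$ and $\overline G$, you already have all of them, and Step~3 is redundant. Your Step~2 also supplies a justification the paper leaves implicit: it derives $k-\lambda-1>0$ from $\mu>0$ via $k(k-\lambda-1)=\mu(v-1-k)$. So the whole lemma reduces to $\mu>0$ and $\overline\mu>0$ (i.e.\ $G$ and $\overline G$ connected and non-complete), plus parameter algebra.

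Your Step~3 argument is valid: $\mu=k$ makes non-adjacency an equivalence relation, so $G$ is complete multipartite. It gives the structural picture behind the usual characterization of imprimitive SRGs, but the identity above makes this extra combinatorial argument unnecessary. Also, the opening remark of Step~3 about $\overline\mu=v-2k+\lambda$ is never used and can be deleted.
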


\begin{prop} \label{Proposition primitive SRG 2}
    If $G$ is a primitive SRG, then the VC-dimension of $G$ is at least 2.
\end{prop}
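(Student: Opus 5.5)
We need a pair $\{x,y\}$ that is shattered by the neighborhood system, i.e.\ each of the four sets $N'(x,y)$, $N'(x,\overline y)$, $N'(\overline x,y)$, $N'(\overline x,\overline y)$ is nonempty. The plan is to choose $x,y$ non-adjacent and then compute the size of each set directly from the parameters, using Lemma \ref{Lemma SRG positive} for positivity.

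Fix distinct vertices $x,y$ with $x\not\sim y$. Such a pair exists because $G$ is primitive, so $\overline{G}$ is connected and has at least one edge; equivalently $\overline k>0$. Since $x\not\sim y$, neither vertex lies in $N(x)$ or in $N(y)$, which simplifies the counting.

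\begin{itemize}
\item $N'(x,y)$ is the set of common neighbours of $x$ and $y$, so it has size $\mu$. This is positive by Lemma \ref{Lemma SRG positive}.
\item $N'(x,\overline y)$ consists of the neighbours of $x$ that are not neighbours of $y$. Since $y\notin N(x)$, its size is $k-\mu$, which is again positive by Lemma \ref{Lemma SRG positive}.
\item $N'(\overline x,y)$ has size $k-\mu>0$ by the symmetric argument.
\item $N'(\overline x,\overline y)$ contains $x$ itself. Indeed $x\notin N(x)$ because there are no loops, and $x\notin N(y)$ because $x\not\sim y$. So this set is nonempty with no further counting.
\end{itemize}

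All four sets are nonempty, so $\{x,y\}$ is shattered and the VC-dimension of $G$ is at least $2$.

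There is no real obstacle. The only points needing care are that the sets $N'$ may include $x$ and $y$ themselves (this is what makes the last case trivial), and that primitivity is used twice: to get a non-adjacent pair, and to get $\mu>0$ and $k-\mu>0$.
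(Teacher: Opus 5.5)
Your proof is correct and follows essentially the same route as the paper. Both pick a non-adjacent pair $x,y$, use $\mu>0$ and $k-\mu>0$ to realize the traces $\{x,y\}$, $\{x\}$ and $\{y\}$, and take $x$ itself as the vertex whose neighborhood misses both; the only difference is that the paper derives the two inequalities directly from connectivity of $G$ and $\overline G$, whereas you cite Lemma~\ref{Lemma SRG positive}.
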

\begin{proof}
    Since $G$ is primitive, its parameters $v,k,\lambda,\mu$ satisfy certain constraints. First, since $G$ is connected and not a complete graph, we have $\mu>0$. Second, since $\overline G$ is also primitive, we must have $\overline \lambda<\overline k-1$; but $\overline\lambda = v-2k+\mu-2$ and $\overline k= v-k-1$, so this implies $\mu<k$.

    Now choose $x,y\in V(G)$ such that $x\not\sim y$. Since $\mu >0$, there is some $z\in V(G)$ with $\{x,y\}\subseteq N(z)$. Since $\mu<k$, there is some $w\in N(x)\sm N(y)$, so that $N(w)\cap\{x,y\}=\{x\}$; similarly there is some $w'$ with $N(w')\cap\{x,y\}=\{y\}$. Finally, we have $N(x)\cap\{x,y\}=\emptyset$ (see Figure \ref{Figure SRG}).
\end{proof}
\begin{figure}[h]
\begin{center}
\begin{tikzpicture}[scale=0.95]
    \draw[fill=black] (0,2) circle (2pt) node[above] {$x$};
    \draw[fill=black] (0,0) circle (2pt) node[below] {$y$};
    \draw[dashed, thick] (2,1) -- (0,0) -- (-3,1) -- (0,2) -- (-.9,1);
    \draw[thick] (-.9,1) -- (0,0) -- (.5,1) -- (0,2) -- (2,1);

    \draw[fill=white] (.5,1) ellipse (.4 and .4) node[black] {$\mu$};
    \draw[fill=white] (-.9,1) ellipse (.6 and .4) node[black] {$k-\mu$};
    \draw[fill=white] (2,1) ellipse (.6 and .4) node[black] {$k-\mu$};
    \draw[fill=white] (-3,1) ellipse (1.3 and .4) node[black] {$v-2k+\mu-2$};
    \draw [dashed] (0,0)--(0,2);
\end{tikzpicture}
\end{center}
\caption{In a primitive SRG, the nonadjacent vertices $x,y$ form a shattered set.} \label{Figure SRG}
\end{figure}

In the rest of this subsection, we will study the question of when the VC-dimension of a primitive SRG is equal to 2. The next two propositions give some examples where this holds.

\begin{prop} \label{Proposition C4 free}
    Suppose that $G$ is a $C_4$-free graph. Then the VC-dimension of $G$ is at most 2.
\end{prop}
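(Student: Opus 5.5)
We argue by contradiction: suppose $G$ is $C_4$-free and some set $S=\{x,y,z\}$ of three distinct vertices is shattered by the open neighborhoods, i.e.\ $N'(A,\overline{S\sm A})\ne\emptyset$ for every $A\subseteq S$. We only need the single subset $A=S$ together with the three two-element subsets, and we derive a $4$-cycle from them.

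First, pick $w\in N'(x,y,z)$, a vertex adjacent to all of $x,y,z$. Since neighborhoods are open, $w\notin S$. Next, pick $u\in N'(x,y,\overline z)$, a vertex adjacent to $x$ and $y$ but not to $z$. Then $u\ne w$, since $w\sim z$ and $u\not\sim z$. Also $u\notin\{x,y\}$, because a vertex is not its own neighbor. We now have two distinct vertices $w,u$, both different from $x$ and $y$, each adjacent to both $x$ and $y$. So $x\,w\,y\,u\,x$ is a closed walk through four distinct vertices using edges $xw,wy,yu,ux$, which is a $4$-cycle in $G$.

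Whether $u=z$ is allowed needs a quick check. Since $u\sim x$ and $u\sim y$ while $z$ is only required to satisfy $z\notin N(z)$, the case $u=z$ is not excluded by the membership condition. Even so, the four vertices $x,w,y,u$ remain distinct, and the cycle is still a $4$-cycle; nothing else is required. This contradicts $C_4$-freeness, so no $3$-set is shattered, and the VC-dimension is at most $2$.

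The only point needing care is this distinctness bookkeeping, since the definition of VC-dimension uses $N'$ (the neighboring vertex may lie in $S$) rather than $N$. The argument above shows this causes no difficulty: the cycle needs only that $w,u$ are distinct and differ from $x,y$, which follows from the loopless graph and the differing adjacency to $z$.
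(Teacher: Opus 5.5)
Your proof is correct and is essentially the paper's argument. You take a vertex adjacent to all of $x,y,z$ and a vertex in $N'(x,y,\overline z)$, which gives two distinct common neighbours of $x$ and $y$ and hence a $4$-cycle. Your explicit use of $N'$, which allows $u=z$, is if anything slightly more careful than the paper, whose proof writes $N(x,y,\overline z)$.
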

\begin{proof}
    Assume, for a contradiction, then $x,y,z$ are 3 distinct vertices of $G$ such that $\{x,y,z\}$ is shattered by $\{N(u):u\in V(P)\}.$ Then $x,y,z$ must have a common neighbor, so $\{x,y,z\}\subseteq N(u)$ for some $u$. Also, there must exist $w\in V(G)$ such that $w\in N(x,y,\overline z)$. Note that $w\not\in\{u,x,y\}$. Thus, $uxwyu$ is a 4-cycle in $G$, a contradiction. 
\end{proof}

\autoref{Proposition C4 free} tells us that $C_5$, the Petersen graph, and the Hoffman-Singleton graph each have VC-dimension 2, and if a Moore graph of order 3250 exists, then it must also have VC-dimension 2. No other primitive SRGs are $C_4$-free (see e.g. \cite{deutsch2001strongly}).

\begin{prop}\label{Proposition mu bar = 1}
    Suppose that $G$ is a primitive $\mathrm{SRG}(v,k,\lambda,\mu)$ with $\overline\mu=1$. Then the VC-dimension of $G$ is 2.
\end{prop}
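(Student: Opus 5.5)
The plan is to prove the two inequalities separately. The lower bound is immediate: since $G$ is primitive, Proposition~\ref{Proposition primitive SRG 2} gives VC-dimension at least $2$. For the upper bound, I will assume for a contradiction that some $3$-set $S=\{x,y,z\}$ is shattered, and derive a contradiction from $\overline\mu=1$ by splitting according to whether $S$ contains an edge.

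First I translate the hypothesis into the language of $G$. Two vertices adjacent in $G$ are nonadjacent in $\overline G$. So $\overline\mu=1$ says: for every edge $ab$ of $G$, there is exactly one vertex outside $\{a,b\}$ adjacent to neither $a$ nor $b$, i.e. $d(\overline a,\overline b)=1$. Moreover, since $a\sim b$, neither $a$ nor $b$ lies in $N'(\overline a,\overline b)$. Hence
\[
|N'(\overline a,\overline b)|=1 \quad\text{whenever } a\sim b .
\]

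\emph{Case 1: $S$ contains an edge}, say $x\sim y$. Shattering requires a vertex in $N'(\overline x,\overline y,z)$ and a vertex in $N'(\overline x,\overline y,\overline z)$. These two vertices are distinct, because their traces on $z$ differ, and both lie in $N'(\overline x,\overline y)$. This contradicts $|N'(\overline x,\overline y)|=1$.

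\emph{Case 2: $S$ is independent.} Shattering gives a vertex $t\in N'(x,\overline y,\overline z)$. Then $t\sim x$, so $t\ne x$, and $t\ne y,z$ because $y,z\not\sim x$. Both $y$ and $z$ are nonadjacent to $x$ (independence) and nonadjacent to $t$ (choice of $t$), and both lie outside $\{x,t\}$. Thus $d(\overline x,\overline t)\ge 2$, which contradicts $\overline\mu=1$ applied to the edge $xt$.

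Both cases are contradictions, so no $3$-set is shattered and the VC-dimension is exactly $2$. The only delicate point is the bookkeeping between $N$ and $N'$: one must check that the endpoints of an edge, and in Case 2 the vertices $y,z$, are correctly counted or excluded when applying $\overline\mu=1$. I expect no real obstacle beyond this.
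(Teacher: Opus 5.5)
Your proof is correct. The lower bound and your Case~1 are exactly the paper's: if two vertices of the triple are adjacent, their unique common non-neighbour cannot realize both the traces $\{z\}$ and $\emptyset$.

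The independent case is where you take a different route. The paper uses the local structure forced by $\overline\mu=1$: the non-neighbours of the empty-trace vertex $z$ induce a complete multipartite graph with parts of size $\overline\lambda+1$. So an independent triple lies in a coclique of size $\overline\lambda+2$, and this coclique equals $N'(\overline{y_1},\overline{y_2})$. Hence the trace $\{y_3\}$ is never realized. You instead use the trace $\{x\}$: the realizing vertex $t$ gives an edge $xt$ with two common non-neighbours $y,z$, which contradicts $\overline\mu=1$ immediately.

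Your version is shorter and self-contained. It avoids the structural claim about $G[N'(\overline z)]$, which the paper states without proof. It also uses only that every edge $ab$ of $G$ satisfies $d(\overline a,\overline b)\le 1$, so your upper bound holds for every graph with this property, without regularity or primitivity. In particular, your argument shows that $G$ has VC-dimension at most $2$ whenever $\overline G$ has no $C_4$ subgraph. So the paper's remark after the proposition can only be correct if ``$C_4$-free'' is meant in the induced sense.

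In return, the paper's argument gives more structural insight. It shows that an independent triple fails to be shattered because it sits inside a maximal clique of $\overline G$ of size $\overline\lambda+2$.
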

\begin{proof}
    By Proposition \ref{Proposition primitive SRG 2}, we only need to prove the upper bound. So assume, for a contradiction, that there is a shattered set $X=\{x_1,x_2,x_3\}$ of size 3. Note that $X\subseteq N'(\overline z)$ for some $z\in V$. Now $G[N'(\overline z)]\simeq K_{\overline\lambda+1,\ldots,\overline\lambda+1}$. If $x_1\sim x_2$, then $z$ is the only vertex nonadjacent to both $x_1$ and $x_2$, so $\{x_1,x_2\}\not\subseteq X$. Thus, $X$ is a $\overline{K_3}$, and $X$ is contained in an independent set of size $\overline\lambda+2$. Now if $y_1,y_2,y_3\in X$ then $N'(\overline y_1,\overline y_2)$ is precisely this independent set, so $N'(\overline{y_1},\overline{y_2},y_3)=\emptyset$ and $X$ is not shattered.
\end{proof}

We note that there are simple examples of general graphs $G$ where $\overline G$ is $C_4$-free and yet the VC-dimension of $G$ is 3. The graphs in \autoref{Proposition mu bar = 1} are the complements of primitive geodetic SRGs. The only known examples of primitive geodetic SRGs are the three aforementioned Moore graphs of diameter 2 \cite{blokhuis1988geodetic}. In the rest of this section, we will show that for sufficiently large $v$, the examples in Propositions \ref{Proposition C4 free} and \ref{Proposition mu bar = 1} are the only primitive $v$-vertex SRGs with VC-dimension 2. We will use the following two results on possible parameter sets.

\begin{thm}[Hoffman-Singleton \cite{hoffman1960moore}] \label{Theorem Hoffman-Singleton}
    If $G$ is a primitive $\mathrm{SRG}(v,k,0,1)$, then $v\in\{5,10,50,3250\}.$
\end{thm}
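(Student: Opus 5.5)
We will follow the classical eigenvalue argument of Hoffman and Singleton. First we pin down $v$ in terms of $k$. With $\lambda=0$ and $\mu=1$, Lemma \ref{Lemma SRG equation} gives $k(k-1)=v-1-k$, so $v=k^2+1$. Next we translate the SRG conditions into a matrix identity. Let $A$ be the adjacency matrix and $I,J$ the identity and all-ones matrices. Counting walks of length $2$ gives
$$A^2=kI+\lambda A+\mu(J-I-A),$$
which with $\lambda=0,\mu=1$ becomes
$$A^2+A-(k-1)I=J.$$
Since $G$ is primitive it is connected and $k$-regular. Hence $k$ is an eigenvalue of multiplicity $1$, with the all-ones vector as eigenvector. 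Every other eigenvector is orthogonal to the all-ones vector and is killed by $J$. So every other eigenvalue $\theta$ satisfies $\theta^2+\theta-(k-1)=0$, that is,
$$\theta\in\{r,s\}=\left\{\tfrac{-1\pm\sqrt{4k-3}}{2}\right\}.$$
Let $f,g$ be the multiplicities of $r,s$. Then $f+g=v-1=k^2$, and $\operatorname{tr}A=0$ gives $k+fr+gs=0$.

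We then split into two cases according to whether $m:=\sqrt{4k-3}$ is rational.

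\emph{Case $m$ irrational.} Since $A$ is an integer matrix, its characteristic polynomial has rational coefficients. So the algebraic conjugates $r,s$ occur with equal multiplicity, $f=g=k^2/2$. The trace equation then reads $k+\frac{k^2}{2}(r+s)=k-\frac{k^2}{2}=0$, so $k=2$ and $v=5$ (the pentagon).

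\emph{Case $m$ rational.} Then $m$ is a positive integer with $k=(m^2+3)/4$. Substituting $r,s=(-1\pm m)/2$ into the trace equation gives $2k-(f+g)+(f-g)m=0$, i.e.
$$2k-k^2+(f-g)m=0.$$
Replacing $k$ by $(m^2+3)/4$ and multiplying by $16$ yields
$$m^4-2m^2-16(f-g)m-15=0.$$
Hence $m$ divides $15$, so $m\in\{1,3,5,15\}$ and correspondingly $k\in\{1,3,7,57\}$. The value $k=1$ gives $v=2$, the graph $K_2$, which is not primitive: its complement is disconnected, and it is also incompatible with $\mu=1$. The remaining values give $v=k^2+1\in\{10,50,3250\}$.

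Combining the two cases proves the theorem. There is no serious obstacle here. The one step needing care is justifying $f=g$ in the irrational case, which rests on the Galois-conjugacy of eigenvalues of an integer matrix. A minor bookkeeping point is excluding the degenerate value $k=1$ using primitivity.
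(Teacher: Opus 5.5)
The paper gives no proof of this statement and simply cites it from Hoffman and Singleton. Your proposal is correct and is exactly their classical eigenvalue argument: $v=k^2+1$, the identity $A^2+A-(k-1)I=J$ forcing the nontrivial eigenvalues $(-1\pm\sqrt{4k-3})/2$, and integrality of their multiplicities, which gives $k=2$ in the irrational case and $m\mid 15$, hence $k\in\{3,7,57\}$ after discarding $k=1$, in the rational case.
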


\begin{lemma}[see \cite{deutsch2001strongly}] \label{Lemma vk11}
    A primitive $\mathrm{SRG}(v,k,1,1)$ does not exist.
\end{lemma}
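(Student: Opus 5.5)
The plan is to use the standard eigenvalue (integrality) argument for strongly regular graphs and show that the parameters $\lambda=\mu=1$ force $k=2$. That value is incompatible with primitivity by Lemma~\ref{Lemma SRG positive}.

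Suppose $G$ is a primitive $\mathrm{SRG}(v,k,1,1)$ with adjacency matrix $A$. Counting common neighbours gives the identity $A^2=kI+\lambda A+\mu(J-I-A)$, which here becomes $A^2=(k-1)I+J$. Since $G$ is connected and $k$-regular, $k$ is a simple eigenvalue with eigenvector the all-ones vector $\mathbf 1$. Every other eigenvector can be taken orthogonal to $\mathbf 1$, so it is killed by $J$, and its eigenvalue $\theta$ satisfies $\theta^2=k-1$. Hence the restricted eigenvalues are $r=\sqrt{k-1}$ and $s=-\sqrt{k-1}$, with multiplicities $f,g$ satisfying $f+g=v-1$. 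Primitivity gives $k-\lambda-1>0$ by Lemma~\ref{Lemma SRG positive}, so $k\ge 3$ and $r\neq s$. Taking the trace of $A$, which is $0$, yields
$$k+(f-g)\sqrt{k-1}=0 .$$

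Now I split according to whether $\sqrt{k-1}$ is rational.
\begin{itemize}
\item If $\sqrt{k-1}$ is irrational, the integers $k$ and $f-g$ can satisfy the displayed equation only if $f=g$ and $k=0$. This is absurd.
\item Otherwise $\sqrt{k-1}=t$ is a positive integer, and the equation reads $t^2+1=k=(g-f)t$. Thus $t$ divides $t^2+1$, so $t\mid 1$, giving $t=1$ and $k=2$. This contradicts $k\ge 3$.
\end{itemize}
As a cross-check, Lemma~\ref{Lemma SRG equation} with $k=2$, $\lambda=\mu=1$ gives $0=v-3$. So $G=K_3$, which is complete and hence imprimitive.

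There is no serious obstacle. The only step needing care is justifying that every non-principal eigenvalue satisfies $\theta^2=k-1$. This is exactly where connectivity (so that $k$ has multiplicity one) and orthogonality to $\mathbf 1$ are used, and both follow from primitivity. As a fallback, a combinatorial route is also available: $\lambda=1$ makes each $G[N(x)]$ a perfect matching, and $\mu=1$ then makes $G$ a friendship-type graph. The Friendship Theorem would then give a windmill, which is not regular unless it is $K_3$. I would present the spectral argument as the main proof, since it is self-contained.
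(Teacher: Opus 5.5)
Your proof is correct. The paper gives no argument for this lemma; it simply cites \cite{deutsch2001strongly}. So your proposal supplies a self-contained proof where the paper relies on the literature.

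What you wrote is essentially the spectral step of the classical Erd\H{o}s--R\'enyi--S\'os/Wilf proof of the Friendship Theorem, specialized to the regular case. The identity $A^2=(k-1)I+J$ forces the non-principal eigenvalues to be $\pm\sqrt{k-1}$, and the trace condition $k+(f-g)\sqrt{k-1}=0$ then forces $k=2$, i.e.\ $G=K_3$. Both cases check out:
\begin{itemize}
\item in the irrational case, $f\neq g$ would make $\sqrt{k-1}=-k/(f-g)$ rational, and $f=g$ gives $k=0$;
\item in the rational case, $t\mid t^2+1$ gives $t=1$.
\end{itemize}
The only input you need from the paper is $k-\lambda-1>0$ from Lemma~\ref{Lemma SRG positive}.

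A small refinement: you do not actually need connectivity to make $k$ simple. The subspace $\mathbf 1^\perp$ is $A$-invariant because $A$ is symmetric with $A\mathbf 1=k\mathbf 1$. On it $A^2=(k-1)I$, and $k^2\neq k-1$, so $k$ cannot reappear there.

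As for the trade-off, the citation buys brevity and ties the lemma to the reference the paper also uses for the claim that the only primitive $C_4$-free SRGs are Moore graphs. Your argument buys independence from that reference at the cost of a few lines. It is also preferable to your Friendship-Theorem fallback. Regularity is already given, so the combinatorial half of the Friendship Theorem (proving regularity) is unnecessary, and the spectral step you wrote is all that remains.
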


Our basic strategy is to search for a $K_3$ or a $\overline{K_3}$ in $G$. This is because the symmetry in an SRG makes such subgraphs good candidates to be shattered, in the following way.

\begin{lemma} \label{Lemma triangle}
    Suppose that $G$ is an SRG which contains a $K_3$ with vertex set $X$. If there exist $r,s,t\in V(G)$ such that $d_X(r)=0$, $d_X(s)=1$, and $d_X(t)=3$, then $X$ is shattered.
\end{lemma}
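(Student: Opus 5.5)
Write $X=\{x_1,x_2,x_3\}$ for the vertex set of the triangle. To show $X$ is shattered, we must exhibit, for every $A\subseteq X$, a vertex $w\in V(G)$ with $N(w)\cap X=A$. The hypotheses give three of the required traces directly. The vertex $r$ realizes $A=\emptyset$ and $t$ realizes $A=X$. The vertex $s$ realizes some singleton, say $N(s)\cap X=\{x_1\}$ after relabelling.

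The plan is to obtain the remaining traces from the triangle structure and from symmetry. First, the two-element sets are free: since $x_i\sim x_j$ for all $i\ne j$ and $x_i\not\sim x_i$, we have $N(x_1)\cap X=\{x_2,x_3\}$, and similarly for $x_2$ and $x_3$. So every $2$-subset of $X$ is realized by the vertex of $X$ it omits. This leaves only the singletons $\{x_2\}$ and $\{x_3\}$.

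For these, I will use strong regularity to show that the number of vertices with trace exactly $\{x_i\}$ does not depend on $i$. Let $a_0,a_1,a_2,a_3$ be the numbers of vertices $w$ with $|N(w)\cap X|=0,1,2,3$, and let $a_{\{x_i\}}$ be the number with trace exactly $\{x_i\}$. Also write $\tau$ for the number of common neighbours of all of $x_1,x_2,x_3$, which is just $a_3$ (the vertices of $X$ themselves have traces of size 2).

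The computation runs by inclusion--exclusion on $N(x_i)$. Among the $k$ neighbours of $x_i$, two lie in $X$, namely the other two triangle vertices. Of the rest, $\lambda-\tau$ are also adjacent to $x_j$ but not to $x_l$, with the same count for $x_l$ in place of $x_j$. Here $\lambda$ enters because each pair of triangle vertices is adjacent, so $d'(x_i,x_j)=\lambda$, and $\tau$ of those common neighbours are also adjacent to the third vertex. Hence
\[
a_{\{x_i\}}=k-2-2(\lambda-\tau)-\tau=k-2\lambda+\tau-2,
\]
which is independent of $i$. Consequently $a_{\{x_1\}}\ge 1$, witnessed by $s$, forces $a_{\{x_2\}}=a_{\{x_3\}}\ge 1$, and every subset of $X$ is realized.

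The main step needing care is this count. One has to make sure the vertices of $X$ themselves are correctly excluded: a vertex $x_j\in X$ is a neighbour of $x_i$ but is not a common neighbour in $N(x_i,x_j)$ when counted with itself. It also helps to note that only $d(x_i)=k$ and $d'(x_i,x_j)=\lambda$ for adjacent pairs are used, so $\mu$ plays no role. With the symmetric formula in hand, the rest is immediate.
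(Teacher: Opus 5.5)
Your argument is the paper's argument. The vertices $r$ and $t$ give the traces $\emptyset$ and $X$, each $x_i$ gives $X\setminus\{x_i\}$, and the number of vertices with trace exactly $\{x_i\}$ depends only on $k,\lambda,\tau$, so $s$ forces all three singletons to be realized.

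There is a slip in the count you yourself flagged as delicate. The vertex $x_l$ is one of the $\lambda$ common neighbours of $x_i$ and $x_j$. So among the $k-2$ neighbours of $x_i$ outside $X$, only $\lambda-1-\tau$ (not $\lambda-\tau$) are adjacent to $x_j$ but not to $x_l$. The correct value is therefore $a_{\{x_i\}}=k-2\lambda+\tau$, which is the paper's $d'(x_i,\overline{x_j},\overline{x_k})=k-2\lambda+d(X)$. For a check, take the $3\times 3$ rook's graph with $X$ a row: your formula gives $0$, while the true count is $2$.

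Your conclusion only uses that this count does not depend on $i$, so the error is harmless and the proof goes through once the displayed formula is corrected.
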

\begin{proof}
    Let $X=\{x_1,x_2,x_3\}$. By assumption, $N_X(r)=\emptyset$ and $N_X(t)=X$. Since $G$ is an SRG, we have that $d'(x_i,\overline{x_j},\overline{x_k})=k-2\lambda+d(X)$. However, since $d_X(s)=1$, this quantity is positive when $x_i\sim s$, so it is positive for all orderings $i,j,k$. Finally, we have $N_X(x_i)=\{x_j,x_k\}$. 
\end{proof}

\begin{lemma} \label{Lemma anti triangle}
    Suppose that $G$ is an SRG which contains a $\overline{K_3}$ with vertex set $X$. If there exist $r,s,t\in V(G)$ such that $d_X(r)=1$, $d_X(s)=2$, and $d_X(t)=3$, then $X$ is shattered.
\end{lemma}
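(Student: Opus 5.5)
The argument mirrors Lemma~\ref{Lemma triangle}, with the roles of adjacency and non-adjacency adjusted to an independent triple. Write $X=\{x_1,x_2,x_3\}$, pairwise nonadjacent. We must exhibit, for every $A\subseteq X$, a vertex $u$ with $N_X(u)=A$. The sets of size $1$, $2$ and $3$ are handled by the hypotheses together with regularity; the empty set comes for free.

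\textbf{The empty set.} Since $X$ is independent, $N_X(x_1)=\emptyset$, so $x_1$ itself realizes $A=\emptyset$. Note that the definition of VC-dimension uses $N'$, so witnesses may lie in $S$. The full set $X$ is realized by $t$ by assumption.

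\textbf{Sets of size $2$.} Fix an ordering $i,j,k$ of $\{1,2,3\}$. By inclusion--exclusion,
\[
d'(x_i,x_j,\overline{x_k}) = d'(x_i,x_j)-d'(x_1,x_2,x_3) = \mu - d'(X),
\]
since $x_i\not\sim x_j$. This count is independent of the choice of the pair. Because $d_X(s)=2$, the vertex $s$ lies in $N'(x_i,x_j,\overline{x_k})$ for the particular pair it is adjacent to. Hence $\mu-d'(X)\ge 1$, and so every pair is realized.

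\textbf{Sets of size $1$.} Similarly,
\[
d'(x_i,\overline{x_j},\overline{x_k}) = k - d'(x_i,x_j) - d'(x_i,x_k) + d'(X) = k-2\mu+d'(X),
\]
again using that the pairs are nonadjacent. This is symmetric in $i$, and the vertex $r$ with $d_X(r)=1$ shows it is positive. Hence every singleton is realized.

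This completes the check of all $2^3$ subsets, so $X$ is shattered. The only point needing care is that inclusion--exclusion is performed over $N'$, which is over all of $V$. Since $X$ is independent, no $x_\ell$ lies in $N'(x_i)$, so no correction terms arise, and the $\mu$-values apply directly to nonadjacent pairs. There is no real obstacle beyond this bookkeeping.
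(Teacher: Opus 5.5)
Your proof is correct and follows the paper's argument essentially step for step. You use $x_1$ itself for the empty set and $t$ for the full set. The counts $k-2\mu+d'(X)$ for singletons and $\mu-d'(X)$ for pairs are independent of the ordering and are shown positive by $r$ and $s$ respectively.
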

\begin{proof}
    Let $X=\{x_1,x_2,x_3\}$. We have $N_X(x_1)=\emptyset$. Now we have $d'(x_i,\overline{x_j},\overline{x_k})=k-2\mu+d(X).$ Since $d_X(r)=1$, this quantity is positive when $x_i\sim r$, so it is positive for all orderings $i,j,k$. We have $d'(x_i,x_j,\overline{x_k})=\mu-d(X)$. Since $d_X(s)=2$, this quantity is positive when $x_i,x_j\sim s$, so it is positive for all orderings $i,j,k$. Finally, $N_X(t)=X$. 
\end{proof}

Call a $K_3$ or $\overline{K_3}$ \textit{good} (with respect to $(r,s,t)$) if it satisfies the assumptions of Lemma \ref{Lemma triangle} or \ref{Lemma anti triangle}. To find a good $K_3$/$\overline{K_3}$, we will use Ramsey's theorem to find a large $K_t$/$\overline{K_t}$ so that we have many triples of vertices to choose from. Moreover, we want this $K_t$/$\overline{K_t}$ to live inside a set of the form $N(x,\overline y)$. The next lemma ensures that we can find a large enough $N(x,\overline y)$.

\begin{lemma} \label{Lemma Ramsey setup}
    Let $C>0$. If $v>(C^2-1)^2$, then every primitive $\mathrm{SRG}(v,k,\lambda,\mu)$ satisfies $k-\lambda\ge C$ or $k-\mu\ge C$.
\end{lemma}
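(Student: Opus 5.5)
We argue by contrapositive: assume $G$ is a primitive $\mathrm{SRG}(v,k,\lambda,\mu)$ with $k-\lambda<C$ and $k-\mu<C$, and show that $v\le (C^2-1)^2$. Set $a=k-\lambda-1$ and $b=k-\mu$. By Lemma \ref{Lemma SRG positive} both are positive integers, and by assumption $a<C-1$ and $b<C$. The plan is to show that the two small quantities $a,b$ control every parameter of $G$, by applying Lemma \ref{Lemma SRG equation} to both $G$ and $\overline G$.

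First, we compute the analogous quantities for the complement using Lemma \ref{Lemma SRG complement}:
$$\overline k-\overline\mu=(v-k-1)-(v-2k+\lambda)=a,\qquad \overline k-\overline\lambda-1=(v-k-1)-(v-2k+\mu-2)-1=b.$$
So passing to the complement just swaps $a$ and $b$. Next, we apply Lemma \ref{Lemma SRG equation} to $G$ and to $\overline G$, using $v-1-k=\overline k$ and $v-1-\overline k=k$:
$$ka=\mu\overline k,\qquad \overline k b=\overline\mu k.$$
Multiplying these identities and cancelling $k\overline k>0$ gives the key identity $ab=\mu\overline\mu$.

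Since $\mu,\overline\mu\ge 1$ (Lemma \ref{Lemma SRG positive}), we get $\mu\le ab$ and $\overline\mu\le ab$. Therefore
$$k=\mu+b\le ab+b,\qquad \overline k=\overline\mu+a\le ab+a,$$
and so
$$v=1+k+\overline k\le 1+2ab+a+b.$$
Inserting $a\le C-2$ and $b\le C-1$ (or the strict real bounds if $C$ is not an integer) yields $v\le 2(C-1)^2$. Finally, $2(C-1)^2\le (C-1)^2(C+1)^2=(C^2-1)^2$ because $(C+1)^2\ge 2$ for $C>0$. This contradicts $v>(C^2-1)^2$.

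The only real obstacle is bounding $k$ itself, since a single application of the SRG equation leaves $k$ free. The trick of pairing it with the complement's equation to obtain $ab=\mu\overline\mu$ resolves this. The remaining steps are routine arithmetic, with some care about non-integer $C$.
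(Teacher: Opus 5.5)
Your proof is correct and takes a different route from the paper's, one that gives a sharper bound. The paper chains the two SRG equations with crude estimates. From $k(k-\lambda-1)=\mu\overline k$, together with $\mu\ge1$, $k<\mu+C$ and $k-\lambda-1<C-1$, it gets $\overline k<C^2-1$. It then applies the equation for $\overline G$, using only $\overline\mu\ge 1$ and $\overline k-\overline\lambda-1\le\overline k-1$, to reach $v<(C^2-1)(C^2-2)+C^2=(C^2-1)^2+1$. You instead observe that complementation swaps $a=k-\lambda-1$ and $b=k-\mu$. This gives the symmetric identity $\mu\overline\mu=ab$, and hence $v\le 1+2ab+a+b$. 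Your bound is quadratic in $C$ (for integer $C$, $v\le 2(C-1)^2$, so with $C=9$ the lemma already holds for $v>128$ rather than $v>6400$), while the paper's is quartic. The paper's route is a direct substitution into the two equations; yours needs the swap observation but yields the much better threshold.

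One small repair is needed in your handling of non-integer $C$, which is wrong as written. From the strict bounds $a<C-1$, $b<C$ you only get $v<1+2C(C-1)+(C-1)+C=2C^2$, not $v\le 2(C-1)^2$. For example, $C=2.5$ permits $a=1$, $b=2$, which gives $1+2ab+a+b=8>4.5$. Also, $(C+1)^2\ge 2$ fails for $0<C<\sqrt2-1$. Both problems are harmless. Since $a\ge 1$, the hypothesis $a<C-1$ forces $C>2$. For $C\ge 2$ one has $(C^2-1)^2-2C^2=C^2(C^2-4)+1>0$, so $v<2C^2<(C^2-1)^2$ still gives the contradiction.

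Incidentally, the paper's final inequality $v<(C^2-1)^2+1$ contradicts $v>(C^2-1)^2$ only when $(C^2-1)^2$ is an integer. So the extra slack in your bound is genuinely useful for non-integer $C$.
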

\begin{proof}
    Suppose not. Then $k-\lambda<C$ and $k-\mu<C$. From Lemma \ref{Lemma SRG complement} and \ref{Lemma SRG equation} we obtain
    $$\overline k=v-k-1=\frac{k(k-\lambda-1)}{\mu}<\frac{(\mu+C)(C-1)}{\mu}=\left(1+\frac{C}{\mu}\right)(C-1)\le(C+1)(C-1).$$
    Since $\overline\mu>0$, applying Lemma \ref{Lemma SRG equation} to $\overline G$ gives
    $$\begin{aligned}v=\frac{\overline k(\overline k-\overline\lambda-1)}{\overline\mu}+\overline k+1&<\frac{(C+1)(C-1)((C+1)(C-1)-1)}{1}+C^2-1+1\\
    &=(C^2-1)(C^2-2)+(C^2-1)+1=(C^2-1)^2+1.\\\end{aligned}$$
\end{proof}

We are now ready to prove our main result. Once we find our large $K_t/\overline{K_t}$ contained in some $N(x,\overline y)$, we will show that if this $K_t/\overline{K_t}$ does not contain a good $K_3/\overline{K_3}$, then $G$ must have a relatively simple structure, which either makes it easy to find a shattered set or forces primitivity.

\begin{thm} \label{Theorem VC=2}
    Let $G$ be a primitive $\mathrm{SRG}(v,k,\lambda,\mu)$. If $v\ge 6401$, then the VC-dimension of $G$ is 2 if and only if $\overline\mu=1$.
\end{thm}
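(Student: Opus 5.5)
The ``if'' direction is Proposition \ref{Proposition mu bar = 1}. For the converse, assume $\overline\mu\ge 2$ and $v\ge 6401$; we exhibit a shattered set of size 3. By Lemma \ref{Lemma Ramsey setup} with $C=9$ (note $(C^2-1)^2=6400$), either $k-\lambda\ge 9$ or $k-\mu\ge 9$. In the first case pick adjacent $x\sim y$, so $|N(x,\overline y)|=k-\lambda-1\ge 8$; in the second pick nonadjacent $x,y$, so $|N(x,\overline y)|=k-\mu\ge 9$. Since $R(3,3)=6$ and $R(4,3)=9$, the set $W=N(x,\overline y)$ contains a $K_3$ or $\overline{K_3}$. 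Choosing larger $C$ would give a $K_t$ or $\overline{K_t}$ with $t\ge 4$ if needed, but the threshold $6401$ suggests aiming for $t$ around $3$ or $4$. Every triple $X\subseteq W$ has $x$ as a vertex with $d_X(x)=3$ and $y$ as a vertex with $d_X(y)=0$. These automatically provide the $t$ of Lemma \ref{Lemma triangle} and \ref{Lemma anti triangle}, and the $r$ of Lemma \ref{Lemma triangle}.

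\textbf{Case $X$ is a $K_3$.} By Lemma \ref{Lemma triangle}, it remains to find a vertex $s$ with $d_X(s)=1$. Equivalently, $d'(x_i,\overline{x_j},\overline{x_k})=k-2\lambda+e$ must be positive, where $e$ counts the vertices adjacent to all of $X$. If this count is $0$, then every vertex outside $X$ adjacent to $x_1$ is adjacent to $x_2$ or $x_3$. Counting then gives $k-2=2(\lambda-1)-(e-1)$ after adjusting for the vertices of $X$ themselves, which forces $\lambda$ to be about $k/2$ and forces the neighbourhoods of triangle vertices to be rigidly structured. I would then vary the triangle inside the Ramsey clique, or use other triangles through $x$, to derive that $G$ is a complete multipartite graph or a disjoint union of cliques. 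Either conclusion contradicts primitivity.

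\textbf{Case $X$ is a $\overline{K_3}$.} By Lemma \ref{Lemma anti triangle}, we need vertices $r,s$ with $d_X(r)=1$ and $d_X(s)=2$. The vertex $x$ is fully adjacent to $X$, not partially, so it does not supply these. Failure of $r$ means $k-2\mu+d(X)=0$, where $d(X)$ is the number of common neighbours of $X$. Failure of $s$ means $\mu=d(X)$, so every common neighbour of two vertices of $X$ is adjacent to the third. In the second situation, I would use the Ramsey set of size $t$ to move between triples. If every pair of vertices in the independent set $W'$ has the same $\mu$ common neighbours, then $W'$ together with its common neighbourhood forms a $K_{t,\mu}$-like configuration. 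I would then apply the SRG equation (Lemma \ref{Lemma SRG equation}) to the complement, where $W'$ becomes a clique. The aim is to show that this forces $\overline\mu\le 1$, or forces $\overline G$ to be imprimitive. In the first situation, $d(X)=2\mu-k<\mu$ still lets us find a vertex $s$ with $d_X(s)=2$, and I would handle $r$ by the analogous counting argument.

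\textbf{Main obstacle.} The hardest part is the structural step: showing that a Ramsey clique or coclique all of whose triples fail to be good forces a contradiction with primitivity or with $\overline\mu\ge 2$. Lemmas \ref{Lemma vk11} and \ref{Theorem Hoffman-Singleton} should dispose of the residual small cases $\mu=1$, $\lambda\in\{0,1\}$. The $C_4$-free Moore-graph case also needs handling: it has $\overline\mu\ne 1$ but VC-dimension 2. This is consistent with the theorem only because $v\ge 6401>3250$ rules those graphs out, so that check must appear explicitly. I expect the delicate point to be ensuring that the counting identities really pin down a degenerate structure instead of merely constraining the parameters.
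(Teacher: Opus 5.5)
Your proposal has a genuine gap. The structural step, which is essentially the whole converse, is left open, and the outcome you want that step to reach is false.

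Take $G=\overline{L_2(n)}$ with $n\ge 81$, i.e.\ vertex set $[n]^2$ with $(i,j)\sim(i',j')$ iff $i\ne i'$ and $j\ne j'$. It is a primitive $\mathrm{SRG}(n^2,(n-1)^2,(n-2)^2,(n-1)(n-2))$ with $\overline\mu=2$ and $v\ge 6401$. For nonadjacent $x,y$, the set $N(x,\overline y)$ consists of $n-1$ vertices of one row or column, hence is independent. For adjacent $x,y$ it induces $K_{n-2,n-2}$, and both sides lie on lines. So the only $K_3$ or $\overline{K_3}$ in your $W$ is three vertices on a line, say column $c$. For such a triple, every vertex $(i,j)$ has $0$ neighbours in $X$ if $j=c$, and $2$ or $3$ otherwise. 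Thus $k-2\mu+d(X)=0$ for every candidate triple. This is exactly your ``first situation'', and it occurs in a primitive graph with $\overline\mu=2$. So no counting can turn it into $\overline\mu\le 1$ or imprimitivity, and your ``analogous counting argument'' for $r$ cannot succeed.

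In fact $G$ has no shattered $K_3$ or $\overline{K_3}$ at all. A triangle uses three distinct rows and columns, and no vertex is nonadjacent to all three of its vertices, since it shares its row with at most one and its column with at most one. An independent triple lies on a line, and as above no vertex has exactly one neighbour in it. The shattered triples of $G$ are induced paths, e.g.\ $\{(1,1),(1,2),(2,3)\}$, whose eight traces are realised by $(1,3),(2,2),(2,1),(1,4),(2,4),(3,2),(3,1),(3,4)$. Hence no argument can prove the theorem if it certifies shattering only via Lemmas~\ref{Lemma triangle} and~\ref{Lemma anti triangle} and otherwise tries to force a degenerate structure.

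The paper's proof uses the same certificate. It starts from a $K_4$ or $\overline{K_3}$ in $N(x,\overline y)$ and runs a long case analysis that also builds candidate triples outside $W$, from the sets $V_0,V_{t-1},V_s,V_1,\dots$ around $K$ or $I$. By the example it cannot be complete either, so modelling your missing step on it would not close the gap. Concretely, $G$ is in Case~2, because no $N(x,\overline y)$ contains a triangle. Take $I=\{(1,c),(2,c),(3,c)\}$ and $z$ in row $1$ outside column $c$. The analysis then falls through Subcases~2.1 and~2.1.2 into Subcase~2.1.2.2, where the asserted inclusion $N(\overline w,\overline u)\subseteq V_\emptyset$ fails. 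For $w=(2,j)\in V_1$ with $j\ne c$ and $u=(2,c)\in I_z$, the common non-neighbours are the rest of row $2$, which lie in $V_1$; indeed $\overline\lambda=n-2>n-3=|V_\emptyset|$. A correct proof must be able to produce shattered triples inducing $P_3$ or $K_2\cup K_1$.

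Incidentally, your count $k-\lambda-1\ge 8$ for adjacent $x,y$ is right. So $R(4,3)=9$ is not directly available in that case, which the paper's claim $d(x,\overline y)\ge 9$ overlooks.
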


\begin{proof}
    It only remains to prove the ``only if" part. So, assume that $G$ is a primitive $\mathrm{SRG}(v,k,\lambda,\mu)$ with $v\ge6401$ and $\overline\mu\ne 1$; we will show $G$ contains a shattered set of size 3. Set $t=4$. The Ramsey number $R(4,3)$ is equal to $9$ \cite{greenwood1955combinatorial}, meaning that any graph on 9 vertices contains a $K_4$ or a $\overline{K_3}$. Note that $v\ge6401=(9^2-1)^2+1$, so by Lemma \ref{Lemma Ramsey setup}, there exist $x,y\in V$ with $d(x,\overline y)\ge 9$. Thus, there exists a clique $K$ of order $t=4$ or an independent set $I$ of order $t-1=3$, which in either case is contained is $N(x,\overline y)$. 

    \textit{Case 1:} there is a clique $K$ of order $t$ in $N(x,\overline y)$.

    \textit{Subcase 1.1:} there exists $z\in V$ such that $1\le d_K(z)\le t-2$. Then choose $x_1\in N_K(z)$ and $x_2,x_3\in K\sm N(z)$. The $K_3$ $\{x_1,x_2,x_3\}$ is good with respect to $(y,z,x)$.

    \textit{Subcase 1.2:} for all $z\in V$, $d_K(z)\in\{0,t-1,t\}$. For $i\in\{0,t-1,t\}$, set
    $$V_i=\{z\in V\sm K:d_K(z)=i\}$$
    and note that $V_0,V_t\ne\emptyset$. For all $u_1,u_2\in K$, we have $N(\overline{u_1},\overline{u_2})=V_0$, so $|V_0|=d(\overline{u_1},\overline{u_2})=\overline{\mu}$. For each $z\in V_{t-1}$, define $f(z)$ to be the unique $u\in K$ such that $z\not\sim u$. For each $u\in K$, we have $|f^{-1}(\{u\})|=\overline k-\overline\mu>0$. In particular, $V_{t-1}\ne\emptyset$ (see Figure \ref{Figure Subcase 1.2}).
    
\begin{figure} 
\begin{center}
\begin{tikzpicture}[scale=1.5]
    \draw[dashed, very thick] (1,0)--(2.5,1);
    \draw[very thick] (5.5,1)--(7,0);
    \draw[fill=lightgray] (4,1) ellipse (2 and .5) node[above] {$t$};
    \node at (4,1.7) {$K$}; 
    \draw[fill=white] (1,0) ellipse (.5 and .5) node {$\overline\mu$};
    \node at (1,-0.7) {$V_0$};
    \draw[fill=white] (4,-0.2) ellipse (2 and .5);
    \node at (4,-1) {$V_{t-1}$};
    \draw[dashed, very thick] (3.2,0.8)--(3.2,0);
    \draw[dashed, very thick] (4.8,0.8)--(4.8,0);
    \draw[very thick] (3.2,0.8)--(4.8,0);
    \draw[very thick] (3.2,0)--(4.8,0.8);
    \draw[fill=black] (3.2,0.8) circle (2pt)
    node[above] {$u_1$};
    \draw[fill=black] (4.8,0.8) circle (2pt)
    node[above] {$u_2$};
    \draw[fill=white] (3.2,0) ellipse (.4 and .2) node {\tiny $\overline k-\overline \mu$};
    \node at (3.2,-0.4) {$f^{-1}(\{u_1\})$};
    \draw[fill=white] (4.8, 0) ellipse (.4 and .2) node {\tiny $\overline k-\overline \mu$};
    \node at (4.8,-0.4) {$f^{-1}(\{u_2\})$};
    \draw[fill=white] (7,0) ellipse (.5 and .5);
    \node at (7,-0.7) {$V_t$};
\end{tikzpicture}
\caption{The structure of $G$ in Subcase 1.2. Here, $u_1$ and $u_2$ are arbitrarily chosen vertices in $K$.} \label{Figure Subcase 1.2}
\end{center}
\end{figure}

    \textit{Subcase 1.2.1:} there exist $z\in V_{t-1}$ and $w_1,w_2\in V_0$ such that $z\sim w_1$ and $z\not\sim w_2$. Since $t\ge 4$, we can choose $u_1,u_2,u_3\in K\sm\{f(z)\}$. The $K_3$ $\{z,u_1,u_2\}$ is good with respect to $(w_2,w_1,u_3)$.

    \textit{Subcase 1.2.2:} for all $z\in V_{t-1}$, we have $d_{V_0}(z)\in\{0,\overline\mu\}$. For $i\in\{0,\overline\mu\}$, set
    $$V_{t-1,i}=\{z\in V_{t-1}:d_{V_0}(z)=i\}.$$
    Now we claim that, for all $u\in K$, $f^{-1}(\{u\})\cap V_{t-1,\overline\mu}\ne\emptyset$. Suppose to the contrary that $f^{-1}(\{u\})\subseteq V_{t-1,0}$. Then for any $z_1,z_2\in f^{-1}(\{u\})$, we have $N(\overline{z_1},\overline{z_2})\supseteq V_0\cup\{u\}$, and so $d(\overline{z_1},\overline{z_2})\ge\overline{\mu}+1$ implying $z_1\not\sim z_2$. Thus, $f^{-1}(\{u\})$ is an independent set, and we have $\overline\lambda=d(\overline{z_1},\overline u)=\overline\mu+(\overline k-\overline\mu-1)=\overline k-1$, contradicting primitivity. 

    \textit{Subcase 1.2.2.1:} there exist $w_1,w_2\in V_0$ with $w_1\not\sim w_2$. Choose $u_1,u_2\in K$, and $z_1\in f^{-1}(\{u_1\})\cap V_{t-1,\overline\mu}$, $z_2\in f^{-1}(\{u_2\})\cap V_{t-1,\overline\mu}$. The $\overline{K_3}$ $\{w_1,w_2,u_1\}$ is good with respect to $(u_2,z_1,z_2)$.

    \textit{Subcase 1.2.2.2:} $V_0$ is a clique. Since $\overline\mu\ge 2$, there exist $w_1,w_2\in V_0$ with $w_1\sim w_2$.
    
    \textit{Subcase 1.2.2.2.1:} $V_{t-1,\overline{\mu}}$ is an independent set. Then choose $y'\in V_{t-1,\overline{\mu}}$, $x'=f(y')$, and $I=V_{t-1,\overline{\mu}}\sm f^{-1}(\{x'\})$, so that $|I|\ge t-1$. This is the situation of Case 2 (with $x,y$ replaced by $x',y'$), which is proved independently below.

    \textit{Subcase 1.2.2.2.2:} there exist $z_1,z_2\in V_{t-1,\overline\mu}$ such that $z_1\sim z_2$. Choose $u_1\in K\sm\{f(z_1)\}$. Then the $K_3$ $\{w_1,w_2,z_1\}$ is good with respect to $(f(z_1),u_1,z_2)$.

    \textit{Case 2:} there is an independent set $I$ of size $s=t-1$ in $N(x,\overline y)$.

    \textit{Subcase 2.1:} there exists $z\in V$ with $2\le d_I(z)\le s-1$. Set $I_z=N_I(z)$ and $I_{\overline{z}}=I\sm I_z$, and note that $|I_z|\ge 2$ and $|I_{\overline z}|\ge 1$.

    \textit{Subcase 2.1.1:} there exists $z_1\in V$ such that $d_I(z_1)>0$ and $|I_z\sm N(z_1)|\ge 2$. 
    
    \textit{Subcase 2.1.1.1:} there exists $u_1\in I_{\overline z}\cap N(z_1)$. Then choose $u_2,u_3\in I_z\sm N(z_1)$; the $\overline{K_3}$ $\{u_1,u_2,u_3\}$ is good with respect to $(z_1,z,x)$.

    \textit{Subcase 2.1.1.2:} $d_{I_{\overline{z}}}(z_1)=0$. Then there exists $u_1\in N_{I_z}(z_1)$; choose $u_2\in I_z\sm N(z_1)$ and $u_3\in I_{\overline z}$. The $\overline{K_3}$ $u_1,u_2,u_3$ is good with respect to $(z_1,z,x)$.

    \textit{Subcase 2.1.2:} for all $w\in V$, either $d_I(w)=0$ or $|I_z\sm N(w)|\le 1$. Set
    $$\begin{aligned}
        V_\emptyset&=\{w\in V\sm I:d_I(w)=0\}\\
        V_{0}&=\{w\in V\sm (I\cup V_{\emptyset}):|I_z\sm N(w)|=0\},\\
        V_1&=\{w\in V\sm(I\cup V_{\emptyset}):|I_z\sm N(w)|=1\}.
    \end{aligned}$$
    Note that $V_\emptyset,V_0\ne\emptyset$, and $V=I\sqcup V_\emptyset\sqcup V_0\sqcup V_1$ (see Figure \ref{Figure Subcase 2.1.2})

\begin{figure}
    \begin{center}
\begin{tikzpicture}[scale=1.5]
    \draw[dashed, very thick] (5.5,1)--(2,-0.5)--(2.5,1);
    \draw[very thick] (4,-0.5)--(2.5,1);
    \draw[fill=white] (4,1) ellipse (2 and .5);
    \draw[thick] (4,1.5)--(4,0.5);
    \node at (4,1.7) {$I$};
    \node at (3,1.7) {$I_z$};
    \node at (5,1.7) {$I_{\overline z}$};
    \draw[fill=white] (2,-0.5) ellipse (.5 and .5);
    \node at (2,-1.2) {$V_\emptyset$};
    \draw[fill=white] (4,-0.5) ellipse (.5 and .5);
    \node at (4,-1.2) {$V_{0}$};
    \draw[fill=white] (6,-0.5) ellipse (.5 and .5);
    \node at (6,-1.2) {$V_1$};
    \draw[fill=black] (3.5,1.2) circle (2pt);
    \draw[fill=black] (6,-0.5) circle (2pt);
    \node at (6,-.66) {$z$};
    \node at (3.5, 1.36) {$u$};
    \draw[dashed] (6,-0.5)--(3.5,1.2);
\end{tikzpicture}
\caption{The structure of $G$ in Subcase 2.1.2. Here, $z$ is an arbitrarily chosen vertex in $V_1$, and $u$ is its unique non-neighbor in $I_z$.} \label{Figure Subcase 2.1.2}
\end{center}
\end{figure}

    \textit{Subcase 2.1.2.1:} there exists $w\in V_1$ with $I_{\overline z}\sm N(w)\ne\emptyset$. Let $u_1\in I_{\overline z}\sm N(w)$. Since $|I_z|\ge 2$, we can find $u_2,u_3\in I_z$ with $u_2\sim w$ and $u_3\not\sim w$. The $\overline{K_3}$ $\{u_1,u_2,u_3\}$ is good with respect to $(w,z,x)$.

    \textit{Subcase 2.1.2.2:} $e(V_1,I_{\overline z})=|V_1||I_{\overline z}|$. Now we claim that $V_1=\emptyset$. Otherwise, there exist $w\in V_1$ and $u\in I_z$ such that $w\not\sim u$. Then $N(\overline w,\overline u)\subseteq V_\emptyset$, so $\overline\lambda\le|V_{\emptyset}|$. On the other hand, choosing $u_1,u_2\in I_z$, we have $|N(\overline{u_1},\overline{u_2})|\ge|V_{\emptyset}|+|I_{\overline z}|\ge\overline\lambda+1$, a contradiction. This proves the claim. Now for any $u\in I_z$, we have $k=|N(u)|=|V_0|$. But for any $u\in I_{\overline z}$, we have $N(u)\subseteq V_0$ and $z\in V_0\sm N(u)$, so $k=|N(u)|\le|V_0|-1$, a contradiction.

    \textit{Subcase 2.2:} for all $z\in V$, we have $d_I(z)\in \{0,1,s\}$. For $i\in\{0,1,s\}$, set
    $$V_i=\{z\in V\sm I:d_I(v)=i\}.$$
    Note that $V_0,V_s\ne\emptyset$. If $u_1,u_2\in I$, then $N(u_1,u_2)=V_s$, so $|V_s|=\mu$. For $z\in V_1$, define $f(z)$ to be the unique $u\in I$ such that $z\sim u$. For each $u\in I$, we have $|f^{-1}(\{u\})|=k-\mu>0$; in particular, $|V_1|\ge s$.

    \textit{Subcase 2.2.1:} there exists $z\in V_1$ such that $0<d_{V_s}(z)<\mu$. Then we can choose $w_1,w_2\in V_s$ such that $z\sim w_1$ and $z\not\sim w_2$. Also, choose $u_1,u_2\in I\sm\{f(z)\}$, possible since $s\ge 3$. The $\overline{K_3}$ $\{z,u_1,u_2\}$ is good with respect to $(f(z),w_2,w_1)$.

    \textit{Subcase 2.2.2:} for all $z\in V_1$, we have $d_{V_s}(z)\in\{0,\mu\}$. For $i\in\{0,\mu\}$, set
    $$V_{1,i}=\{z\in V_1:d_{V_s}(z)=i\}$$
    (see Figure \ref{Figure Subcase 2.2.2}). Now we claim that, for all $u\in I$, $f^{-1}(\{u\})\cap V_{1,0}\ne\emptyset$. Suppose to the contrary that $f^{-1}(\{u\})\subseteq V_{1,\mu}$. Then for any $z_1,z_2\in f^{-1}(\{u\})$, we have $N(z_1,z_2)\supseteq V_0\cup\{u\}$, so $d(z_1,z_2)\ge\mu+1$ and thus $z_1\sim z_2$. Thus, $f^{-1}(\{u\})$ is a clique, and we have $\lambda=d(z_1,u)=\mu+(k-\mu-1)=k-1$, contradicting primitivity.

\begin{figure}
        \begin{center}
\begin{tikzpicture}[scale=1.5]
    \draw[very thick] (1,0)--(2.5,1);
    \draw[dashed, very thick] (5.5,1)--(7,0);
    \draw[dashed, very thick] (1,0.2)--(5,-0.4);
    \draw[very thick] (1,0)--(3,-0.7);
    \draw[fill=white] (4,1) ellipse (2 and .5) node {$s$};
    \node at (4,1.7) {$I$}; 
    \draw[fill=white] (1,0) ellipse (.5 and .5) node {$\mu$};
    \node at (1,-0.7) {$V_s$};
    \draw[fill=white] (3,-0.7) ellipse (.9 and .5);
    \node at (3,-1.4) {$V_{1,\mu}$};
    \draw[fill=white] (5,-0.7) ellipse (.9 and .5);
    \node at (5,-1.4) {$V_{1,0}$};
    \draw[fill=white] (7,0) ellipse (.5 and .5);
    \node at (7,-0.7) {$V_0$};
\end{tikzpicture}
\end{center}
\caption{The structure of $G$ in Subcase 2.2.2.} \label{Figure Subcase 2.2.2}
\end{figure}

    \textit{Subcase 2.2.2.1:} $V_{1,\mu}\ne\emptyset$. Let $z_1\in V_{1,\mu}$ and set $u_1=f(z_1)$. Choose $z_2\in f^{-1}(\{u_1\})\cap V_{1,0}$, $u_2\in I\sm\{u_1\}$, and $z_3\in f^{-1}\{u_2\}\cap V_{1,0}$.

    \textit{Subcase 2.2.2.1.1:} there exist $w_1,w_2\in V_s$ with $w_1\sim w_2$. Then the $K_3$ $\{w_1,w_2,u_1\}$ is good with respect to $(z_3,z_2,z_1)$.

    \textit{Subcase 2.2.2.1.2:} $V_s$ is an independent set. 
    Choose $z\in V_{1,0}$. For each $u\in I$, we have $N(z,u)\subseteq f^{-1}(\{u\})$, so $k=d(z)\ge\sum_{u\in I\sm\{f(z)\}}d(z,u)=(s-1)\mu$, so $k\ge 2\mu$ as $s\ge 3$.

    \textit{Subcase 2.2.2.1.2.1:} $\mu\ge 2$. Then there exist $w_1,w_2\in V_s$ with $w_1\not\sim w_2$. Choose $z_1\in V_{1,0}$ and consider the $\overline{K_3}$ $\{w_1,w_2,z_1\}$. Note that $f(z_1)\in N(w_1,w_2,z_1)$. Thus, $d(z_1,\overline{w_1},\overline{w_2})=k-2\mu+d(z_1,w_1,w_2)\ge k-2\mu+1\ge 1$, so there exists some $z_2\in N(z_1,\overline{w_1},\overline{w_2})$. Choose $u\in I\sm\{f(z_1)\}$. Then $\{z_1,w_1,w_2\}$ is good with respect to $(z_2,u,f(z_1))$.

    \textit{Subcase 2.2.2.1.2.2:} $\mu=1$. Since $(\lambda,\mu)\ne(0,1)$, this implies that $\lambda\ge 2$, by Theorem \ref{Theorem Hoffman-Singleton}, Lemma \ref{Lemma vk11}, and $v\ge 6401$. For any $z\in V$, $N(z)$ is a disjoint union of cliques of order $\lambda+1\ge 3$. Since $G$ is primitive, there must be at least 2 such cliques by Lemma \ref{Lemma SRG positive}, so $k\ge2(\lambda+1)>2\lambda$. Consider the $K_3$ $\{z_1,z_2,z_3\}$ contained in one of these cliques, and let $z_4$ be in a different one of these cliques. Now $d(z_1,\overline{z_2},\overline{ z_3})\ge k-|N(z_1,z_2)|-|N(z_1,z_3)|=k-2\lambda>0$, so there exists $z_5\in N(z_1,\overline{z_2},\overline{z_3})$. Then $\{z_1,z_2,z_3\}$ is good with respect to $(z_4,z_5,z)$.

    \textit{Subcase 2.2.2.2:} $V_{1,\mu}=\emptyset$. Then $e(V_s,V_1)=0$. For any $z\in V_1$ and $u\in I\sm\{f(z)\}$, we have $N(z,u)\subseteq V_1$, so $|N(z)\cap f^{-1}(\{u\})|=\mu$. Summing over all $u\in I\sm\{f(z)\}$, we obtain $k=|N(z)|\ge\mu(s-1)$. Now choose $w\in V_s$ and $u\in I$. Since $N(w,u)\subseteq V_s$, we have $\lambda=d(w,u)\le|V_s|-1=\mu-1$. So we have $k\ge\mu(s-1)\ge(\lambda+1)(s-1)$. Since $s\ge 3$, we have $k\ge2\mu$ and $k>2\lambda$. Now if $\mu=1$, then $\lambda\le\mu-1=0$, contradicting $(\lambda,\mu)\ne(0,1)$. So $\mu\ge 2$.

    \textit{Subcase 2.2.2.2.1:} $\mu=2$. Let $z\in V_1,u\in I\sm\{f(z)\}.$ Since $\mu=2$ and and $z\in V_{1,0}$, there exist $z_1,z_2\in N(z,u)$. Now $d(z_1,z_2)\ge 2>\lambda$ so $z_1\not\sim z_2$. Choose $w\in V_s$, so that $\{w,z_1,z_2\}$ is a $\overline{K_3}$ which is good with respect to $(f(z),z,u)$.

    \textit{Subcase 2.2.2.2.2:} $\mu\ge 3$. 

    \textit{Subcase 2.2.2.2.2.1:} there exist $w_1,w_2\in V_s$ with $w_1\not\sim w_2$. Choose $z\in V_1$. Since $f(z)\in N(w_1,w_2,z)$, we have $d(z,\overline{w_1},\overline{w_2})\ge k-2\mu+d(z,w_1,w_2)\ge k-2\mu+1>0$, so there exists $z_1\in N(z,\overline{w_1},\overline{w_2}$. Let $u_1\in I\sm\{f(z)\}$. Then the $\overline{K_3}$ $\{w_1,w_2,z\}$ is good with respect to $(z_1,u_1,f(z))$.

    \textit{Subcase 2.2.2.2.2.2:} $V_s$ is a clique. Since $\mu\ge 3$, we can take $w_1,w_2,w_3\in V_s$. Let $u_1,u_2\in I$ and let $z_1\in f^{-1}(\{u_1\})$, $z_2\in f^{-1}(\{u_2\})$. Then the $K_3$ $\{w_1,w_2,u_1\}$ is good with respect to $(z_2,z_1,w_3)$.
\end{proof}

We conclude this section with some general upper bounds, which have proven useful in analyzing specific SRGs. Part (1) of Proposition \ref{Proposition misc bounds}, and Part (2) without the equality case, hold for any $k$-regular graph.

\begin{prop} \label{Proposition misc bounds}
    Suppose that $G$ is an $\mathrm{SRG}(v,k,\lambda,\mu)$ with VC-dimension $n\ge 2$.
    \begin{enumerate}
        \item $2^n\le v$. If equality holds, then $k=v/2$.
        \item $2^{n-1}\le k$. If equality holds, then $\lambda=k/2$ or $\mu=k/2$.
        \item $2^{n-2}\le\max(\lambda,\mu)$.
        \item $2^{n-1}\le v-k$. If equality holds, then $3k=v+2\lambda$ or $3k=v+2\mu$.
        \item $2^{n-2}\le k-\min(\lambda,\mu)$.
        \item $2^{n-2}\le v-2k+\max(\lambda,\mu)$.
    \end{enumerate}
\end{prop}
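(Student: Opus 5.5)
The plan is to fix a shattered set $S\subseteq V(G)$ with $|S|=n$. For every $A\subseteq S$ we choose a witness $w_A$ with $N(w_A)\cap S=A$. Distinct subsets $A$ have distinct witnesses, since the traces $N(w_A)\cap S$ differ. Each of the six inequalities will come from counting the witnesses $w_A$, with $A$ restricted by the presence or absence of one or two fixed elements $s,s'\in S$, inside a set whose size is determined by the SRG parameters. Since $n\ge 2$, such distinct $s,s'$ exist.

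\textbf{Parts (1), (2), (3), (5).} All $2^n$ witnesses are distinct, so $2^n\le v$. If $v=2^n$, every vertex is the witness of exactly one $A$. Double counting $\sum_{w\in V}|N(w)\cap S|$ then gives $nk$ on one side, since each $s\in S$ has exactly $k$ neighbours, and $\sum_{A}|A|=n2^{n-1}$ on the other, so $k=2^{n-1}=v/2$.

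For (2), the $2^{n-1}$ witnesses $w_A$ with $s\in A$ are all adjacent to $s$, hence lie in $N(s)$ and differ from $s$; thus $2^{n-1}\le k$. If equality holds, $N(s)$ is exactly this set of witnesses. Counting those also adjacent to $s'$ gives exactly $2^{n-2}$ (the sets $A\ni s,s'$), so $d(s,s')=k/2$. Since $d(s,s')\in\{\lambda,\mu\}$, we get $\lambda=k/2$ or $\mu=k/2$.

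For (3), the witnesses of the sets $A\supseteq\{s,s'\}$ lie in $N(s,s')$, so $2^{n-2}\le d(s,s')\le\max(\lambda,\mu)$. For (5), the witnesses of the sets $A\ni s$ with $s'\notin A$ lie in $N(s)\setminus N(s')$. That set has size $k-d(s,s')\le k-\min(\lambda,\mu)$.

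\textbf{Parts (4) and (6).} Here witnesses may coincide with $s$ or $s'$ themselves, so the counts must include those vertices. For (4), the witnesses of the sets $A\not\ni s$ lie in $V\setminus N(s)$, which has size $v-k$ and contains $s$; hence $2^{n-1}\le v-k$.

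In the equality case, $V\setminus N(s)$ is exactly these witnesses, and exactly $2^{n-2}$ of them are adjacent to $s'$. We count $(V\setminus N(s))\cap N(s')$ directly.
\begin{itemize}
\item If $s\sim s'$, this set is $N(s')\setminus N[s]$ together with $s$ itself, of size $(k-\lambda-1)+1=k-\lambda$.
\item If $s\not\sim s'$, it is $N(s')\setminus N(s)$, of size $k-\mu$.
\end{itemize}
Setting $k-\lambda$ (respectively $k-\mu$) equal to $2^{n-2}=(v-k)/2$ gives $3k=v+2\lambda$ (respectively $3k=v+2\mu$).

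For (6), the witnesses of the sets $A$ with $s,s'\notin A$ lie in the set of vertices adjacent to neither $s$ nor $s'$, where $s$ and $s'$ themselves are allowed.
\begin{itemize}
\item If $s\sim s'$, both lie in $N(s)\cup N(s')$. That union has size $2k-\lambda$, so the set has size $v-2k+\lambda$.
\item If $s\not\sim s'$, the union has size $2k-\mu$ and excludes $s,s'$, so the set has size $v-2k+\mu$.
\end{itemize}
Either way the size is at most $v-2k+\max(\lambda,\mu)$.

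\textbf{Expected obstacle.} There is no real difficulty; the only delicate step is the bookkeeping in (4) and (6). One must track whether $s,s'$ themselves can serve as witnesses, and how adjacency between $s$ and $s'$ shifts each count by one. This bookkeeping is what produces the exact equality conditions $3k=v+2\lambda$ or $3k=v+2\mu$.
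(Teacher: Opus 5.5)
Your proposal is correct and follows the paper's proof essentially step for step: fix a shattered set, then count the witnesses of subsets with prescribed membership of one or two fixed points of $S$ inside a set whose size the SRG parameters determine. The only differences are cosmetic. In (1) you double-count over all of $S$ where the paper fixes a single $y\in S$. In (4) and (6) you spell out the $s\sim s'$ versus $s\not\sim s'$ bookkeeping that the paper compresses into statements like $d'(\overline y,z)\in\{k-\lambda,k-\mu\}$.
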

\begin{proof}
    Assume that $S\subseteq V$ is a shattered set of size $n$.

    (1) For every $T\subseteq S$, there exists $x_T\in V$ such that $N(x_T)\cap S=T$. If $T\ne T'$ then $x_{T}\ne x_{T'}$. Thus,
    $$v\ge |\{x_T:T\subseteq S\}|=|\{T:T\subseteq S\}|=2^n.$$
    If equality holds, then $V=\{x_T:T\subseteq S\}$. For all $y\in S$,
    $$k=d(y)=|\{x\in V:x\sim y\}|=|\{T\subseteq S:x_T\sim y\}|=|\{T\subseteq S:y\in T\}|=2^{n-1}=v/2.$$

    (2) Fix $y\in S$. For every $T\subseteq S\sm\{y\}$, there exists $x_T\in V$ such that $N(x_T)\cap S=T\cup\{y\}$. Then
    $$k\ge|\{x_T:T\subseteq S\sm\{y\}\}|=2^{n-1}.$$
    If equality holds, then $N(y)=\{x_T:T\subseteq S\sm\{y\}\}$. Fix $z\in S\sm\{y\}$. Then
    $$d(y,z)=|\{x\in N(y):x\sim z\}|=|\{T\subseteq S\sm\{y\}:z\in T\}|=2^{n-2}=k/2.$$
    But $d(y,z)\in\{\lambda,\mu\}$.

    (3) Fix $y,z\in S$. For every $T\subseteq S\sm\{y,z\}$, there exists $x_T\in V$ such that $N(x_T)\cap S=T\cup\{y,z\}$. Then
    $$\max(\lambda,\mu)\ge d(y,z)
    \ge|\{x_T:T\subseteq S\sm\{y,z\}\}|=2^{n-2}.$$

    (4) Fix $y\in S$. For every $T\subseteq S\sm\{y\}$, there exists $x_T\in V$ such that $N(x_T)\cap S=T$. Then
    $$v-k=d'(\overline y)\ge|\{x_T:T\subseteq S\sm\{y\}\}|=2^{n-1}.$$
    If equality holds, then $N'(\overline y)=\{x_T:T\subseteq S\sm\{y\}\}$. Fix $z\in S\sm\{y\}$. Then
    $$d'(\overline y,z)=|\{x\in N'(\overline y):x\sim z
    \}=\{x_T:z\in T\}=2^{n-2}=(v-k)/2.$$
    But $d'(\overline y,z)\in\{k-\lambda,k-\mu\}.$

    (5) Fix $y,z\in S$. For every $T\subseteq S\sm\{y,z\}$, there exists $x_T\in V$ such that $N(x_T)\cap S=T\cup\{y\}$. Then
    $$k-\min(\lambda,\mu)\ge d'(y,\overline z)\ge|\{x_T:T\subseteq S\sm\{y,z\}\}=2^{n-2}.$$

    (6) Fix $y,z\in S$. For every $T\subseteq S\sm\{y,z\}$, there exists $x_T\in V$ such that $N(x_T)\cap S=T$. Then
    $$v-2k+\max(\lambda,\mu)\ge d'(\overline y,\overline z)\ge|\{x_T:T\subseteq S\sm\{y,z\}\}|=2^{n-2}.$$
\end{proof}

We briefly discuss the usefulness of the bounds in Proposition \ref{Proposition misc bounds}. We determined the VC-dimension of each SRG order at most 28 (see Section \ref{Section Small SRGs}). In exactly 15 of the 23 possible parameter sets, at least one of the bounds is tight. The specific bound giving the best value varies depending on the SRG, and sometimes the equality cases are needed. With respect to infinite families of graphs, Proposition \ref{Proposition misc bounds} will sometimes improve the best known upper bounds. For example, the equality case of (2) implies that the VC-dimension of the Paley graph of order $q$ is strictly smaller than $\log_2(q)$ (cf. \cite{mcdonald2025vc}).
    
\subsection{Latin squares} \label{Section Latin Squares}

A \textit{Latin square} of order $m$ is an $m\times m$ matrix with entries in $[m]:=\{1,2,\ldots,m\}$ such that each row and each column contains each entry exactly once. A Latin square $L$ gives rise to a graph $G(L)$ with vertex set $\{(i,j,L_{i,j}):i,j\in[m]\}$, where $(i,j,L_{ij})\sim(k,\ell,L_{k\ell})$ if and only if $i=k$, $j=k$, or $L_{ij}=L_{k\ell}$. The graph $G(L)$ is strongly regular with parameters $(m^2,3(m-1),m,6)$. In this section we will determine the VC-dimension of sufficiently large Latin square graphs. The following lemma, useful to simplify our bookkeeping, well-known. We include the proof for clarity.

\begin{lemma} \label{Lemma Latin square transformations}
Let $L$ be a Latin square and suppose that $L'$ is obtained from $L$ by one of the following operations:
\begin{enumerate}
    \item permuting the rows;
    \item permuting the columns;
    \item permuting the entries;
    \item taking the transpose;
    \item interchanging rows and entries (i.e., putting the entry $i$ in position $(L_{i,j},j)$);
    \item interchanging columns and entries (i.e., putting the entry $j$ in position $(i,L_{i,j})$).
\end{enumerate}
Then $G(L)\simeq G(L')$.
\end{lemma}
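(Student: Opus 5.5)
The plan is to view each Latin square graph symmetrically: encode $L$ as the set of triples $T(L)=\{(i,j,L_{i,j}):i,j\in[m]\}\subseteq[m]^3$. Then $T(L)$ is exactly a set of $m^2$ triples in which any two coordinates determine the third. Two triples are adjacent in $G(L)$ precisely when they agree in at least one coordinate. Since two distinct triples can agree in at most one coordinate, this is the same as agreeing in exactly one coordinate. The vertex set of $G(L)$ is literally $T(L)$, and adjacency depends only on which coordinates agree. So any bijection $\phi:T(L)\to T(L')$ that acts on triples by coordinatewise relabelings and/or a permutation of the three coordinate positions preserves the relation ``agree in some coordinate'', and is therefore a graph isomorphism. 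It then remains to check, for each of the six operations, that (i) $T(L')$ is the image of $T(L)$ under such a map, and (ii) $L'$ is again a Latin square. Part (ii) is clear from the ``any two coordinates determine the third'' description, which is symmetric in the coordinates.

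Next I would go through the operations one at a time.
\begin{itemize}
\item For (1), (2) and (3), with permutations $\sigma$ of rows, columns or symbols, the map is $(i,j,s)\mapsto(\sigma(i),j,s)$, $(i,\sigma(j),s)$ or $(i,j,\sigma(s))$ respectively. Agreement in the $r$-th coordinate is preserved because $\sigma$ is injective.
\item For (4) the map is $(i,j,s)\mapsto(j,i,s)$.
\item For (5), $L'$ has entry $i$ in position $(L_{i,j},j)$, so $T(L')=\{(s,j,i):(i,j,s)\in T(L)\}$; the map is the coordinate swap $(i,j,s)\mapsto(s,j,i)$.
\item For (6) the map is $(i,j,s)\mapsto(i,s,j)$.
\end{itemize}
In each case, two triples agree in coordinate $r$ if and only if their images agree in the corresponding permuted coordinate. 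So adjacency is preserved both ways, and the map is a bijection because it is invertible.

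The only point needing care is the well-definedness in (5) and (6): that the described array really is a Latin square, i.e.\ each position $(L_{i,j},j)$ receives exactly one entry. I would argue this as follows. For fixed $j$, the map $i\mapsto L_{i,j}$ is a bijection because column $j$ contains each symbol once. Hence every position $(s,j)$ is filled exactly once. Row and column conditions for $L'$ then follow from the three-way symmetry of $T(L)$. This is routine, and I do not expect any real obstacle; the whole argument is bookkeeping once the triple description is in place.
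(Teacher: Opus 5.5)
Your proposal is correct and follows essentially the same route as the paper: the paper also writes down an explicit bijection on triples for each operation, for instance $(i,j,L_{i,j})\mapsto(L_{i,j},j,i)$ for (5). It then checks that agreement in some coordinate is preserved, using the column (respectively row) Latin property to get bijectivity in (5) and (6). Your framing via coordinatewise relabelings and coordinate permutations of $T(L)$ simply packages these six separate checks into a single observation.
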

\begin{proof}
Let $\pi$ be a permutation of $[m]$. The Latin square obtained by applying $\pi$ to the rows/columns/entries contains $L_{\pi(i),j}$/$L_{i,\pi(j)}$/$\pi(L_{i,j})$ in the $(i,j)$-entry. In the case of the rows, let $\phi:V(G(L))\to V(G(L'))$ be given by $\phi((i,j,L_{i,j}))=(\pi^{-1}(i),j,L_{i,j})$. Then $(i,j,L_{i,j})\sim_{G(L)}(i',j',L_{i',j'})$ if and only if $i=i'$, $j=j'$, or $L_{i,j}=L_{i',j'}$; and $(\pi^{-1}(i),j,L_{i,j})\sim_{G(L')}(\pi^{-1}(i'),j',L_{i',j'})$ if and only if $\pi^{-1}(i)=\pi^{-1}(i')$, $j=j'$, or $L_{i,j}=L_{i',j'}$. But $\pi^{-1}(i)=\pi^{-1}(i')$ if and only if $i=i'$, so the two adjacencies are equivalent, and $\phi$ is an isomorphism. The column case is handled similarly. In the case of the entries, let $\phi:V(G(L))\to V(G(L'))$ be given by $\phi((i,j,L_{i,j}))=(i,j,\pi(L_{i,j}))$. It is easy to check that $\phi$ is an isomorphism.

For (4), we use the isomorphism $(i,j,L_{i,j})\mapsto(j,i,L_{i,j})$. For (5), we use the isomorphism $(i,j,L_{i,j})\mapsto (L_{i,j},j,i)$. This map is bijective because the $j^{th}$ column of $L$ contains each entry exactly once, and it is immediate that it is an isomorphism. Part (6) follows similarly.
\end{proof}

We use Lemma \ref{Lemma Latin square transformations} freely in the rest of this section. Note that, when we speak of the order of a Latin square graph, we mean the \textit{square} of the order of the Latin square. We start by examining the smallest nontrivial case, when the graph has order 16. There are only two nonisomorphic Latin square graphs of order 16 (see, e.g., the parameter table in \cite{brouwer2022strongly}).

\begin{eg} \label{Example LS 1}
    Consider the Latin square graph arising from $\mathbb Z_4$:
    $$\begin{bmatrix}
        1&2&3&4\\
        2&3&4&1\\
        3&4&1&2\\
        4&1&2&3
    \end{bmatrix}.$$
    One can check that $(1,1,1),(1,2,2)$, and $(3,2,4)$ form a shattered set of size 3. By Proposition \ref{Proposition misc bounds}(4), the VC-dimension is exactly 3.
\end{eg}

\begin{eg} \label{Example LS 2}
    The other Latin square of size 4 arises from $\mathbb Z_2\times\mathbb Z_2$:
    $$\begin{bmatrix}
        1&2&3&4\\
        2&1&4&3\\
        3&4&1&2\\
        4&3&2&1
    \end{bmatrix}.$$
    Again one can check that positions $(1,1,1)$, $(1,2,2)$, and $(3,2,4)$ form a shattered set of size 3. By Proposition \ref{Proposition misc bounds}(4), the VC-dimension is exactly 3.
\end{eg}

\begin{prop}
    The VC-dimension of any Latin square graph of order at least $16$ is at least 3.
\end{prop}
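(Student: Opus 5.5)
The plan is to exhibit an explicit shattered set of size $3$ in $G(L)$ for every Latin square $L$ of order $m\ge 4$, generalizing the triple used in Examples~\ref{Example LS 1} and~\ref{Example LS 2}. Take two cells in row $1$ and a third cell in column $2$:
$$x_1=(1,1,a),\qquad x_2=(1,2,b),\qquad x_3=(r,2,c),$$
where $r\ne 1$ is chosen so that $c\ne a$. This is possible because column $2$ contains $a$ in at most one row. Then $c\ne b$ automatically, since both lie in column $2$. So $x_1\sim x_2\sim x_3$ and $x_1\not\sim x_3$. Rather than invoking Lemma~\ref{Lemma triangle} or~\ref{Lemma anti triangle} (this $S$ is an induced $P_3$, not a $K_3$ or $\overline{K_3}$), I will directly produce, for each $T\subseteq\{1,2,3\}$, a vertex $u_T$ with $N(u_T)\cap S=\{x_i:i\in T\}$. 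Each such vertex must avoid the three rows/columns/symbols that would create unwanted adjacencies.

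The candidate witnesses are as follows.
\begin{itemize}
\item $T=\{1,2,3\}$: the cell of row $1$ carrying symbol $c$, which lies in neither column $1$ nor column $2$ since $c\notin\{a,b\}$.
\item $T=\{1,2\}$: any other row-$1$ cell outside columns $1,2$, which exists since $m\ge 4$.
\item $T=\{2,3\}$: a column-$2$ cell in a row $\notin\{1,r\}$ whose symbol is not $a$; at least $m-3\ge 1$ remain.
\item $T=\{1,3\}$: the cell of row $r$ with symbol $a$, which is in neither column $1$ nor column $2$.
\item $T=\{1\}$: a cell with symbol $a$ in a row $\notin\{1,r\}$; there are $m-2\ge 2$ of these, none in columns $1$ or $2$ except possibly one in column $2$, which is impossible because column $2$'s $a$ lies in a row other than $r$ only if allowed, so I will discard that single cell if needed.
\item $T=\{3\}$: a cell with symbol $c$ in a row $\notin\{1,r\}$ and not in column $1$.
\item $T=\{2\}$: a cell with symbol $b$ in a row $\notin\{1,r\}$ and not in column $1$.
\end{itemize}
In each case a one-line count, "$m-2$ rows minus at most one excluded column", gives existence for $m\ge 4$.

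The main obstacle is $T=\emptyset$. Here we need a cell outside rows $1,r$, outside columns $1,2$, and with symbol $\notin\{a,b,c\}$. The crude count of $(m-2)^2$ cells minus at most $3(m-2)$ cells carrying the forbidden symbols works only for $m\ge 6$. For $m\ge 6$ I will use that count. For $m=5$ I will sharpen it: the symbol $a$ occurs in column $1$ (row $1$) and $c$ occurs in column $2$ (row $r$), so at least one of the forbidden occurrences in the relevant rows falls outside the $(m-2)\times(m-2)$ block. Failing that, I will use the freedom in choosing $r$ (and in relabeling via Lemma~\ref{Lemma Latin square transformations}) to force a free cell. For $m=4$, the graph has order $16$ and is one of the two Latin square graphs of that order, so the claim is exactly Examples~\ref{Example LS 1} and~\ref{Example LS 2}. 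Combining these cases yields a shattered triple for every Latin square graph of order at least $16$.
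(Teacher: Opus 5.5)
Your induced-$P_3$ triple is a valid choice, and every witness other than the one for $T=\emptyset$ exists by the counts you give. The one step that is not established is $T=\emptyset$ at $m=5$.

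The reason you cite is the $a$ at $(1,1)$ and the $c$ at $(r,2)$. Both lie in rows $1$ and $r$, so they are not among the $3(m-2)$ forbidden cells (rows outside $\{1,r\}$) that your crude bound counts, and they cannot lower it. The fallback ``use the freedom in choosing $r$'' is not an argument.

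The fact you need is this. Column $2$ has $b$ in row $1$ and $c$ in row $r$, so its $a$ lies in a row outside $\{1,r\}$. It is therefore one of the counted forbidden cells, but it sits in column $2$, outside the $(m-2)\times(m-2)$ block. This is also why, in your $T=\{1\}$ bullet, that cell is always among the $m-2$ candidates and must always be discarded; your count $m-3\ge 1$ there is still right. Similarly, $L_{r1}\ne c$ because row $r$ already has $c$ in column $2$, and $L_{11}=a\ne c$. So the $c$ of column $1$ is a second counted cell outside the block.

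Hence the block contains at least $(m-2)^2-3(m-2)+2=(m-3)(m-4)$ admissible cells. This is positive for every $m\ge 5$, so no separate $m=5$ case is needed.

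If you also choose $r$ with $L_{r1}\ne b$, the count improves further. This choice is possible: there are $m-2\ge 2$ admissible rows $r$, and $b$ occurs only once in column $1$. Then the $b$ of column $1$ also leaves the block, and the count becomes $(m-2)^2-3(m-3)$, which equals $1$ at $m=4$. So your construction handles $m=4$ without the examples. Indeed, for both squares in Examples~\ref{Example LS 1} and~\ref{Example LS 2}, taking $r=3$ gives exactly the triple $(1,1,1),(1,2,2),(3,2,4)$ used there.

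With that repair, your proof is correct but takes a different route from the paper. The paper normalizes row $1$ to $(1,2,\ldots,m)$, takes the triangle $\{(1,1,1),(1,2,2),(1,3,3)\}$, and applies Lemma~\ref{Lemma triangle}. There, strong regularity makes one singleton witness $(i,1,4)$ suffice for all three singletons, and the pairs are witnessed by the triangle's own vertices. Only $(i,4,5)$ and $(1,4,4)$ remain to be exhibited. That is shorter, but this row triangle can never be shattered when $m=4$. A vertex missing all three would need row $\ne 1$, column $4$ and symbol $4$, which is impossible since $L_{14}=4$. So the paper must defer to the two order-$16$ examples. Your route gives up the lemma and checks all eight subsets by hand, but it uses only the Latin property and, with the sharper count, works uniformly for all $m\ge 4$.
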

\begin{proof}
    Let $L$ be a Latin square of order $m\ge 4$ and let $G=G(L)$. If $m=4$ we may consult Examples \ref{Example LS 1} and \ref{Example LS 2}, so assume $m\ge 5$. Let row 1 of $L$ be $(1,2,\ldots,m)$. For $j\in\{1,2,3\}$, set $x_j=(1,j,L_{1j})=(1,j,j)$. We claim that $\{x_1,x_2,x_3\}$ is shattered. Since $\{x_1,x_2,x_3\}$ is a $K_3$, we may apply Lemma \ref{Lemma triangle}. Note that $L_{i4}=5$ for some $i\ne 1$, so set $y_1=(i,4,5)$. Also, $L_{i1}=4$ for some $i\ne 1$, so set $y_2=(i,1,4)$. Finally, set $y_3=(1,4,4)$. Then $\{x_1,x_2,x_3\}$ is good with respect to $(y_1,y_2,y_3)$.
\end{proof}

\begin{prop}
    The VC-dimension of any Latin square graph is at most 4.
\end{prop}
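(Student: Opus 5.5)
The plan is to exploit the geometry of $G(L)$ as a net with three parallel classes of lines (rows, columns, symbols), each class partitioning $V$ into $m$ cliques of size $m$. Two vertices are adjacent exactly when they share a line, and two distinct vertices share at most one line. So for each vertex $w$, $N(w)$ is the union of the three lines through $w$, minus $w$. Two facts about a line $\ell$ not containing $w$ drive everything. The line through $w$ of the same class as $\ell$ is disjoint from $\ell$, and each of the other two lines through $w$ meets $\ell$ in exactly one point. Hence $|N(w)\cap\ell|\le 2$, and the (at most two) points of $N(w)\cap \ell$ lie on lines through $w$ of distinct classes. By contrast, if $w\in\ell$ then $N(w)\supseteq \ell\sm\{w\}$.

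Suppose for contradiction that $S$ is shattered with $|S|=5$. First I take $u$ with $S\subseteq N(u)$. Then $S$ is covered by the three lines $\ell_1,\ell_2,\ell_3$ through $u$, which are of three different classes and pairwise meet only in $u\notin S$. By Lemma \ref{Lemma Latin square transformations}, I may assume $\ell_1$ is a row, $\ell_2$ a column, and $\ell_3$ a symbol line. Set $S_i=S\cap\ell_i$; these partition $S$, and up to symmetry the size profile $(|S_1|,|S_2|,|S_3|)$ is one of $(5,0,0),(4,1,0),(3,2,0),(3,1,1),(2,2,1)$.

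I then treat each profile by exhibiting a subset $T\subseteq S$ that no neighborhood can cut out. A witness $w$ for $T$ (i.e.\ $N(w)\cap S=T$) is either on some $\ell_i$ or on none of them.
\begin{itemize}
\item If $w$ lies on $\ell_i$ with $w\ne u$, then $N(w)$ contains all of $S_i\sm\{w\}$. For $j\ne i$ it contains at most one point of $S_j$, namely the point where the line through $w$ of the class of $\ell_j$ meets $\ell_j$; this line is parallel to $\ell_i$, the only other possibility, so it cannot hit $\ell_j$ elsewhere.
\item If $w$ lies on no $\ell_i$, then $w$ picks up at most $2$ points from each $\ell_i$, and each of $w$'s three lines contributes at most one point per $\ell_j$ of another class.
\end{itemize}
So a line through $w$ (say the row through $w$) can only contribute points on $\ell_2$ and $\ell_3$. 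This gives at most $6$ points in total, with strong alignment constraints.

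For a large $S_1$ (profiles with $|S_1|\ge 3$), I take $T$ to contain exactly two points $a,b\in S_1$ and omit a third $c\in S_1$. Off $\ell_1$, the witness must have its column through $a$ or $b$ and its symbol line through the other. Then $w$ is determined by $(a,b)$ up to the two choices, being the intersection of the column through one and the symbol line through the other. On $\ell_1$, the witness must be $c$ itself (and only if $|S_1|=3$). I then choose the remaining elements of $T$ so that none of these at most three candidate vertices works. This uses the fact that a candidate off $\ell_1$ has both its column and symbol line already ``spent'' on $\ell_1$, so it can meet $S_2\cup S_3$ only via its row.

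For profile $(2,2,1)$, with $S_1=\{a,b\}$, $S_2=\{c,d\}$, $S_3=\{e\}$, I consider $T=\{a,c,e\}$ and its complement-type sets such as $\{a,d\}$ and $\{b,c,e\}$. Each requires a witness whose three lines are routed through prescribed points of distinct classes, and I would count how rigidly $w$ is pinned down. The point $w$ off all $\ell_i$ containing $a,c,e$ must use its column for $a$, its row for $c$, and its symbol line for $a$ or $c$ too, which is impossible since each line through $w$ is used once and $e$ needs one of them.

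I expect the main obstacle to be the bookkeeping in profiles $(3,1,1)$ and $(2,2,1)$. There, witnesses lying on some $\ell_i$ (vertices of $S$ or other points of the $\ell_i$) must be ruled out separately, and one must pick $T$ so that both the on-line and off-line witnesses fail. I would first try the $4$-subsets $S\sm\{s\}$ and the $3$-subsets with one point from each $\ell_i$. The former force $w$ on some $\ell_i$ or with all three lines saturated, and the latter force all three lines of $w$ to be used in a prescribed pattern. I would fall back on $2$-subsets only where needed.
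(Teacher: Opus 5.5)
\textbf{There is a genuine gap: the profile $(2,2,1)$ is not proved.} Your framework matches the paper's: a common neighbour $u$ of $S$, the split of $S$ along the row, column and symbol line through $u$, and the five profiles. But $(2,2,1)$ is the one profile that needs real work, and the concrete step you give for it is false.

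You claim that no $w$ off $\ell_1\cup\ell_2\cup\ell_3$ has $N(w)\cap S=\{a,c,e\}$, because ``each line through $w$ is used once''. In fact each line through such a $w$ meets \emph{two} of the $\ell_i$: its row meets $\ell_2,\ell_3$, its column meets $\ell_1,\ell_3$, and its symbol line meets $\ell_1,\ell_2$. So one line can collect two points of $S$. For a counterexample, take the cyclic square $L_{ij}\equiv i+j-1\pmod m$ (entries in $[m]$, $m\ge 5$), with $u=(1,1,1)$, $a=(1,2,2)$, $b=(1,4,4)$, $c=(2,1,2)$, $d=(4,1,4)$, $e=(m,2,1)$. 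Then $w=(2,2,3)$ lies on no $\ell_i$ and $N(w)\cap S=\{a,c,e\}$; column $2$ supplies both $a$ and $e$.

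The same oversight falsifies your remark that a candidate off $\ell_1$ ``can meet $S_2\cup S_3$ only via its row''. Its column (through $a$) meets $\ell_3$, and its symbol line (through $b$) meets $\ell_2$, possibly inside $S$. There is also a slip in your first bullet: the relevant line is the one through $w$ of the \emph{third} class, since the line of the class of $\ell_j$ is parallel to $\ell_j$.

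The profiles with $|S_1|\ge 3$ survive, but by counting rather than by your ``spent lines'' reasoning. If $|S_1|=3$ and $c$ is omitted, the only vertices $w$ with $N(w)\cap S_1=\{a,b\}$ are $c$ and the two intersections of the column through one of $a,b$ with the symbol line through the other. Three vertices cannot realize the four sets $\{a,b\}\cup Z$, $Z\subseteq S_2\cup S_3$. If $|S_1|\ge 4$, pairs do not suffice: for $(5,0,0)$ both off-line candidates do realize $\{a,b\}$. Instead, three points of $S_1$ force the witness onto $\ell_1$, hence to be the omitted point of $S_1$. That is impossible for $(5,0,0)$, and for $(4,1,0)$ it leaves one candidate for two sets.

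For $(2,2,1)$ there is no such count, and you have to pin the square down step by step. The paper does this with four of the $4$-subsets $S\setminus\{s\}$:
\begin{enumerate}
\item The witness for $S\setminus\{e\}$ must lie off all $\ell_i$, with its symbol line through a point of $S_1$ and a point of $S_2$, say $a$ and $c$. This fixes the cell in the row of $d$ and column of $b$: it carries the common symbol of $a$ and $c$.
\item $S\setminus\{d\}$ then fixes the cell in the row of $c$ and column of $a$ (it carries the symbol of $b$), and puts $e$ on a line through that cell.
\item $S\setminus\{c\}$ forces its witness onto $\ell_1$, which fixes the column of $e$.
\item $S\setminus\{b\}$ is then left with no witness.
\end{enumerate}
Your closing paragraph points toward $4$-subsets, but as written the decisive case is missing.
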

\begin{proof}
    Suppose to the contrary that $X$ is a shattered set of size 5. We may assume $X\subseteq N((1,1,1))$. Let $R=\{(i,j,\ell)\in X:i=1\}$, $C=\{i,j,\ell\in X:j=1\}$, $E=\{(i,j,\ell\in X:\ell=1\}$ so that $X=R\sqcup C\sqcup E$. Set $r=|R|,c=|C|,e=|E|$. By Lemma \ref{Lemma Latin square transformations}, we may assume $r\ge c\ge e$.

    \textit{Case 1:} $(r,c,e)=(2,2,1)$. We may assume that the shattered set consists of the bold entries in the following principal submatrix
    $$\begin{bmatrix}
        1&\textbf{2}&\textbf{3}\\
        \mathbf{a}&*&*\\
        \mathbf{b}&*&*
    \end{bmatrix}$$
    along with an entry equal to 1, somewhere in the Latin square. Label these vertices $x_2,x_3,x_a,x_b,x_1$. Since $N'(x_2,x_3,x_a,x_b,\overline{x_1})\ne\emptyset$, we must have $\{a,b\}\cap\{2,3\}\ne\emptyset$ and $L_{ij}\in\{a,b\}\cap\{2,3\}$ for some $(i,j)\in\{2,3\}\times\{2,3\}$. Without loss of generality, we have
    $$\begin{bmatrix}
        1&\textbf{2}&\textbf{3}\\
        \mathbf{2}&*&*\\
        \mathbf{b}&*&2
    \end{bmatrix}.$$
    Since $N'(x_2,x_3,x_a,x_1,\overline{x_b})\ne\emptyset$, this forces WLOG
        $$\begin{bmatrix}
        1&\textbf{2}&\textbf{3}&*\\
        \mathbf{2}&3&*&\mathbf{1}\\
        \mathbf{b}&*&2&*
    \end{bmatrix}$$
    and moreover $b\ne 3$, so we may assume $b=4$. Let $(i,j,\ell)\in N'(x_2,x_3,x_1,x_b,\overline{x_a})$. If $i\ne 1$ then we must have $i=2$ and $j=3$, but then $\ell\ne 2$, contradicting $(i,j,\ell)\sim x_2$. Thus, $i=1$, forcing
      $$\begin{bmatrix}
        1&\textbf{2}&\textbf{3}&4\\
        \mathbf{2}&3&*&\mathbf{1}\\
        \mathbf{4}&*&2&*
    \end{bmatrix}.$$
    Now let $(i',j',\ell')\in N'(x_2,x_a,x_b,x_1,\overline{x_3})$. If $\ell'=2$ then we must have $(i',j')=(3,4)$, which is impossible since $L_{3,3}=2$. So $\ell'\ne 2$, so $(i',j',\ell')\sim x_2,x_a$ forces $(i',j')\in\{(1,1),(2,2)\}$, and either case leads to a contradiction.

    \textit{Case 2:} $(r,c,e)=(3,1,1)$. The shattered set takes the form
    $$\begin{bmatrix}
        1&\textbf{2}&\textbf{3}&\textbf{4}\\
        \textbf{a}&*&*&*
    \end{bmatrix}$$
    along with a 1 appearing somewhere in the Latin square. Label the vertices in the shattered set $x_2,x_3,x_4,x_a,x_1$. Note that $N'(x_2,x_3,x_4,x_a,\overline{x_1})\ne\emptyset$ forces 
    $$\begin{bmatrix}
        1&\textbf{2}&\textbf{3}&\textbf{4}&5\\
        \textbf{5}&*&*&*&*
    \end{bmatrix}.$$
    with $x_1$ not in column 5. Since $N'(x_2,x_3,x_4,x_1,\overline{x_a})\ne\emptyset$, we may assume that $L_{1,6}=6$ and $x_1$ belongs to column 6. Then $N'(x_2,x_3,x_a,x_1,\overline{x_4})\ne\emptyset$ forces
    $$\begin{bmatrix}
        1&\textbf{2}&\textbf{3}&\textbf{4}&5&6\\
        \textbf{5}&3&*&*&*&\textbf{1}
    \end{bmatrix}$$
    up to permuting columns $2,3$ and entries $2,3$. Now the facts $N'(x_2,x_4,x_a,x_1,\overline{x_3})\ne\emptyset$ and $N'(x_3,x_4,x_a,x_1,\overline{x_2})\ne\emptyset$ force
    $$\begin{bmatrix}
        1&\textbf{2}&\textbf{3}&\textbf{4}&5&6\\
        \textbf{5}&3&4&2&*&\textbf{1}
    \end{bmatrix}.$$
    But then we see $N(x_2,x_3,x_a,\overline{x_4},\overline{x_1})=\emptyset$, a contradiction.

    \textit{Case 3:} $(r,c,e)=(3,2,0)$. The shattered set takes the form
    $$\begin{bmatrix}
        1&\textbf{2}&\textbf{3}&\textbf{4}\\
        \textbf{a}&*&*&*\\
        \textbf{b}&*&*&*
    \end{bmatrix}.$$
    Since $N'(x_2,x_3,x_4,x_a,\overline{x_b})\ne\emptyset$ and $N'(x_2,x_3,x_4,x_b,\overline{x_a})\ne\emptyset$, we may assume that $a=5$ and $b=6$. Now let $(i,j,\ell)\in N'(x_2,x_3,x_a,x_b,\overline{x_4})$. We have $i\ne 1$, so $\ell\in\{2,3\}$. Also, $j\ne 1$, so $\ell\in\{5,6\}$, a contradiction.

    \textit{Case 4:} $r\ge 4$. Let $x_1,x_2,x_3,x_4$ be 4 elements of the shattered set all in row 1. Then $N(x_1,x_2,x_3,\overline{x_4})=\emptyset$, contradicting assumption.
\end{proof}

\begin{thm} \label{Theorem Latin square} The VC-dimension of any Latin square graph of order at least $225$ is 4.
\end{thm}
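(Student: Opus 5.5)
To prove Theorem \ref{Theorem Latin square}, the upper bound is already given by the preceding proposition, which shows that every Latin square graph has VC-dimension at most 4. So it remains to exhibit, in every Latin square $L$ of order $m\ge 15$, a shattered set of size 4. We have not checked that the configuration below actually works.

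Mirroring the case analysis in the upper-bound proof, we take the four vertices inside $N((1,1,1))$ with row/column/entry profile $(r,c,e)=(2,1,1)$. By Lemma \ref{Lemma Latin square transformations} we normalise row 1 to be $(1,2,\ldots,m)$. We then set
\[
x_2=(1,2,2),\qquad x_3=(1,3,3),\qquad x_a=(2,1,a),\qquad x_1=(i_0,j_0,1),
\]
and leave $a$ and the position $(i_0,j_0)$ of the $1$ free, to be fixed at the end.

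The core of the proof is a list, for each of the 16 subsets $T\subseteq X=\{x_2,x_3,x_a,x_1\}$, of an explicit witness vertex whose neighbourhood meets $X$ exactly in $T$. Each witness should be described in terms of row 1, row 2, column 1, or the cells containing a given symbol. Some examples:
\begin{itemize}
\item $T=X$ is witnessed by $(1,1,1)$.
\item $T=\{x_2,x_3\}$ is witnessed by a row-1 cell $(1,j,j)$ with $j\notin\{1,2,3,a,j_0\}$.
\item $T=\{x_2,x_3,x_a\}$ is witnessed by $(1,a,a)$, provided $j_0\ne a$.
\item $T=\{x_2,x_3,x_1\}$ is witnessed by $(1,j_0,j_0)$, provided $j_0\ne a$.
\item $T=\emptyset$ is witnessed by almost any vertex, since there are $m^2$ vertices and each $x$ has only $3(m-1)$ neighbours.
\item Sets containing $x_a$ and $x_2$ but not $x_3$ are witnessed by the cell in row 2 or column 1 that carries symbol 2 or symbol $a$, together with the cell of row 1 carrying $a$.
\end{itemize}
As Case 1 of the upper-bound proof shows, some of the patterns mixing three of the four vertices force coincidences of the form "symbol 2 occurs in a prescribed row or column", and these are the only rigid requirements.

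Every remaining requirement is a \emph{non}-coincidence: a specified symbol, row or column must differ from a bounded number of forbidden values. We therefore choose the free data greedily. First we fix $a$ (essentially: the symbol in cell $(2,1)$, after permuting rows 2 through $m$) so that it avoids a bounded set of bad values. Then we choose the column $j_0$, which determines $x_1$ as the cell of symbol 1 in that column, so that it avoids the at most 14 values already "used" by the construction. Since $m\ge 15$, this can be arranged by pigeonhole, which is exactly where the hypothesis $m^2\ge 225$ enters.

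I expect the main obstacle to be the patterns of size three involving $x_1$, such as $\{x_2,x_a,x_1\}$ and $\{x_3,x_a,x_1\}$. A witness for these must lie simultaneously on a line through $x_2$ (or $x_3$), a line through $x_a$ and a line through $x_1$, and in a general Latin square there is no control over where symbols sit outside row 1. My first attempt would be to use the freedom in choosing $x_1$. For the pattern $\{x_2,x_a,x_1\}$, let $c$ be the column containing symbol 2 in row 2, and take $x_1$ to be the cell of symbol 1 in column $c$. Then $(2,c,2)$ is adjacent to $x_a$ (row 2), $x_2$ (entry 2) and $x_1$ (column $c$), but not to $x_3$. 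The pattern $\{x_3,x_a,x_1\}$ would be handled symmetrically, using the cell of symbol $a$ in row 1 or column 3 to link $x_a$ and $x_3$.

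If these two requirements on $x_1$ conflict, I would fall back on a different profile, such as $(2,2,0)$, or choose $X$ outside a single closed neighbourhood. In either case the same greedy, at-most-14-exclusions counting would finish the argument.
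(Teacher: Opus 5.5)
Your proposal leaves the lower bound unproved. The step you flag as the main obstacle is a genuine gap, and the premise behind it also fails for the $(2,1,1)$ profile: it is not true that a few rigid coincidences are settled by the free choices and everything else is greedy avoidance. Take the generic case $x_1=(i_0,j_0,1)$ with $i_0\ne 2$, $j_0\notin\{2,3\}$, $a\notin\{2,3\}$; your choice of $x_1$ has $i_0\ne 2$. A direct check shows that a vertex $y$ with $N_X(y)=\{x_2,x_a,x_1\}$ exists only if one of $L_{2j_0}=2$, $L_{22}=1$, $L_{i_02}=a$, $L_{i_01}=2$ holds. Likewise, a vertex with $N_X(y)=\{x_3,x_a,x_1\}$ exists only if one of $L_{2j_0}=3$, $L_{23}=1$, $L_{i_03}=a$, $L_{i_01}=3$ holds. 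Your choice of $x_1$ secures the first via $L_{2j_0}=2$, which rules out $L_{2j_0}=3$, and nothing forces any of the other three. ``Symmetrically'' is unavailable because $x_1$ is already fixed, and the cell $(1,a,a)$ you mention is adjacent to $x_2$. Pigeonholing over excluded values cannot manufacture such positive coincidences. The natural escape is to put $x_1$ in row $2$, after which $(2,2,L_{22})$ and $(2,3,L_{23})$ serve as witnesses. But then the only candidate witness for $\{x_2,x_3,x_a\}$ is $(1,a,a)$, which needs $j_0\ne a$. In the Cayley table of $\mathbb{Z}_2^k$ (e.g.\ $m=16$), any two cells with equal symbols span an intercalate, so $L_{1j_0}=L_{21}$, i.e.\ $j_0=a$, for every choice. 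Your other fallback, taking $X$ not inside a single neighbourhood, is impossible: the witness for $T=X$ is a vertex whose neighbourhood contains $X$.

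The missing idea is to choose the profile so that no positive coincidence is ever needed. That is your unexplored fallback $(2,2,0)$, and it is what the paper does. Take $x_2=(1,2,2)$ and $x_3=(1,3,3)$ in row~$1$, and $x_4=(2,1,4)$ and $x_7=(3,1,7)$ in column~$1$. Every $3$-subset is then a full line-pair plus one vertex of the other pair, and it is witnessed by the cell of that line carrying the odd vertex's symbol. Concretely, $(1,4,4)$ and $(1,7,7)$ witness $X\sm\{x_7\}$ and $X\sm\{x_4\}$, and the column-$1$ cells with symbols $3$ and $2$ witness $X\sm\{x_2\}$ and $X\sm\{x_3\}$. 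The mixed pairs are witnessed by the cross cells $(2,2),(2,3),(3,2),(3,3)$, and all remaining subsets by cells avoiding a bounded set of rows, columns and symbols. So only avoidance is required: each cross cell must avoid the symbols of the two vertices it has to miss. The paper arranges this by choosing rows $2$ and $3$ so that the top-left $3\times 3$ block carries nine distinct symbols. This excludes at most $7$ rows and then at most $14$ rows, which is exactly where $m\ge 15$ enters.
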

\begin{proof}
    It only remains to prove the lower bound. Let $L$ be a Latin square of order $m\ge\sqrt{225}=15$. Let row 1 be $(1,2,\ldots,m)$. The number of entries in columns 1, 2, 3 equal to 1, 2, or 3 is $3+3\cdot 2$, and they appear in at most $1+3\cdot 2=7$ distinct rows. Since $m\ge 8$, we can assume that row 2 is $(4,5,6,*,\ldots,*)$. The number of entries in columns 1, 2, 3 equal to 1, 2, 3, 4, 5, or 6 is at most $6+6\cdot 2$, and they appear in at most $2+6\cdot 2=14$ distinct rows. Since $m\ge 15$, we can assume that row 3 is $(7,8,9,*,\ldots,*)$. Set $x_2=(1,2,2)$, $x_3=(1,3,3)$, $x_4=(2,1,4)$, $x_7=(3,1,7)$. We claim that $X=\{x_2,x_3,x_4,x_7\}$ is shattered. 
    $$\begin{bmatrix}
    1&\mathbf{2}&\mathbf{3}\\
    \mathbf{4}&5&6\\
    \mathbf{7}&8&9
\end{bmatrix}$$
    Row 4 contains $m-9\ge 6$ entries outside $[9]$, so set $y_0=(4,j,\ell)$ for some $\ell\not\in[9]$ and $j\not\in[3]$. Then $N_X(y_0)=\emptyset$. Row 2 contains $m-9\ge 6$ entries outside $[9]$, so set $y_2=(2,j,\ell)$ for some $\ell\not\in[9]$ and $j\not\in[3]$. Then $N_X(y_2)=\{x_2\}$. Similarly, we can find $y_3,y_4,y_7$ such that $N_x(y_\ell)=\{x_\ell\}$, for $\ell\in\{3,4,7\}$. We have $N_X((2,2,5))=\{x_2,x_4\}$, $N_X((2,3,6))=\{x_3,x_4\}$, $N_X((3,2,8))=\{x_2,x_7\}$, and $N_X((3,3,9))=\{x_3,x_7\}$. Row 1 contains $m-9\ge 6$ entries outside $[9]$, so set $y_{2,3}=(1,j,\ell)$ for some $\ell\not\in[9]$. Then $N_X(y_{2,3})=\{x_2,x_3\}$. Similarly, there exists $y_{4,7}$ such that $N_X(y_{4,7})=\{x_4,x_7\}$. There exists $j\ge 4$ with $L_{1j}=4$, so setting $z_7=(1,j,4)$, we have $N_X(z_7)=X\sm\{x_7\}$. Similarly, we can find $z_i$ such that $N_X(z_i)=X\sm\{x_i\}$, for $i\in\{2,3,4\}$. Finally, we have $N_X((1,1,1))=X$.
\end{proof}

\subsection{Orthogonal arrays}

An \textit{orthogonal array} $\mathrm{OA}(k,m)$ is a $k\times m^2$ matrix with entries in $[m]$ such that, in any $2\times m^2$ submatrix, all possible ordered pairs in $[m]\times[m]$ appear as columns. The graph of an $\mathrm{OA}(k,m)$ is an $\mathrm{SRG}(m^2,k(m-1),m-2+(k-1)(k-2),k(k-1))$ (see e.g. \cite{godsil2013algebraic}).

\begin{thm} \label{Theorem OA upper bound}
    Let $G$ be the graph of an $\mathrm{OA}(k,m)$. The VC-dimension of $G$ is at most $k(k-1)$. 
\end{thm}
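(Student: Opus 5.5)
The plan is to exploit the geometry of the orthogonal array: the $m^2$ columns are the vertices, and for each row $r\in[k]$ and symbol $c\in[m]$ the set of columns with symbol $c$ in row $r$ is a \emph{line} of size $m$. The lines with a fixed $r$ form a parallel class partitioning $V$. Two columns agree in at most one row, since any two rows determine the column, so two lines from different classes meet in exactly one vertex. The neighborhood $N(y)$ is the union of the $k$ lines through $y$, minus $y$. Suppose $S$ is shattered with $|S|=n$. The first step is to take a vertex $x$ with $S\subseteq N(x)$, which exists by shattering (and $x\notin S$). Then $S=S_1\sqcup\cdots\sqcup S_k$, where $S_i=S\cap\ell_i$ and $\ell_i$ is the line through $x$ in class $i$. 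This decomposition is well defined because each $s\in S$ agrees with $x$ in exactly one row. It then suffices to show $|S_i|\le k-1$ for every $i$.

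The second step bounds the trace of any neighborhood on a line $\ell_i$. If $y\in\ell_i$, then $N(y)\cap\ell_i=\ell_i\setminus\{y\}$, so $N(y)$ contains all of $S_i$ except possibly $y$ itself. If $y\notin\ell_i$, then the class-$i$ line of $y$ is parallel to $\ell_i$, and each of the other $k-1$ lines through $y$ meets $\ell_i$ in exactly one vertex. Hence $|N(y)\cap\ell_i|\le k-1$, and $N(y)\cap\ell_i$ consists of the intersections of $\ell_i$ with the $k-1$ non-class-$i$ lines through $y$.

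The third step applies shattering to a cleverly chosen target $T\subseteq S$. Taking $T\subseteq S_i$ with $|T|=|S_i|-2$ already forces $|S_i|\le k+1$. To sharpen this to $k-1$, I would choose targets that also prescribe the behaviour on the other parts $S_j$. Take $T=S_i\cup\{s\}$ minus a few elements, or $T$ containing specific points of some $S_j$, so that a realizing $y\notin\ell_i$ must spend some of its $k-1$ non-class-$i$ lines on points outside $\ell_i$. Each line through $y$ of class $j\ne i$ meets $\ell_i$ in one point and meets $\ell_j$ either in one point or, when $y\in\ell_j$, entirely. Counting which of the $k-1$ lines through $y$ can contribute to $T\cap\ell_i$, given the prescribed membership of points of $\bigcup_{j\ne i}S_j$, should cost two lines and leave room for at most $k-2$ further points of $S_i$. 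Alternatively, one can vary the choice of the common neighbour $x$ by using the vertices that realize $T=S$ and $T=S\setminus\{s\}$. Summing $|S_i|\le k-1$ over $i$ would then give $n\le k(k-1)$.

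The main obstacle is this sharpening from $k+1$ to $k-1$ per line. The crude count above loses exactly the two ``free'' intersections, and recovering them requires a careful choice of target sets that exploits the interaction between the line through $y$ in the class of $\ell_i$ and the lines through $x$. I would first try the target $T$ equal to $S_i$ minus one point, together with one point of some $S_j$, $j\neq i$. Failing that, I would fall back on a weighted count, bounding $|S\cap N(y)|$ by the number of pairs (line through $y$, line through $x$) that meet, which is at most $k(k-1)$ for $y\notin N[x]$, and apply it to a suitable $y$ realizing a large subset of $S$.
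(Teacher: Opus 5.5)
Your reduction aims at a false statement. A shattered set can have $k+1$ points, not just $k-1$, on one line through a common neighbour. Take the Latin square graph ($k=3$) whose row $1$ is $(1,2,\ldots,m)$, with $m\ge 9$, and let $S=\{(1,a,a):1\le a\le 4\}$. Every subset of $S$ is realized:
the empty set by a cell off row $1$ whose column and symbol both lie outside $[4]$;
the set $\{(1,a,a)\}$ by a cell off row $1$ in column $a$ with symbol outside $[4]$;
the set $\{(1,a,a),(1,b,b)\}$ by the cell of column $a$ containing symbol $b$;
the set $S\setminus\{(1,d,d)\}$ by $(1,d,d)$ itself;
and $S$ by $(1,5,5)$.

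A vertex off row $1$ has at most two neighbours on row $1$. So every admissible $x$ lies on row $1$, and your part $S_i$ is all of $S$, of size $4=k+1$. (For $k=2$, three collinear points of the lattice graph are shattered the same way, which is why the statement needs $k\ge 3$.) Thus your crude bound $|S_i|\le k+1$ is sharp, exactly because the realizer of $S_i\setminus\{s\}$ may be $s$ itself. Summing over the $k$ lines through one common neighbour therefore cannot beat $k(k+1)$. The refined target sets in your third step cannot change this. The weighted count would need a realizer of (nearly) all of $S$ outside $N[x]$, which shattering does not supply.

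What is missing is the paper's two-case structure. If $S$ is not a clique, take non-adjacent $x,y\in S$. The realizers of the $2^{n-2}$ subsets containing both are distinct common neighbours, so $2^{n-2}\le\mu=k(k-1)$. If $S$ is a clique, colour each pair of $S$ by the unique row in which the two columns agree. Each colour class is an equivalence relation, so every monochromatic component consists of collinear points. Gy\'arf\'as's theorem then gives a line $\ell$ with $|S\cap\ell|\ge n/(k-1)$; this is what replaces your factor $k$ by $k-1$.

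The paper then asserts $|S\cap\ell|\le k$, because the realizer of $(S\cap\ell)\setminus\{c\}$ lies off $\ell$. That overlooks the realizer $c$ itself, and the example above shows $|S\cap\ell|=k+1$ does occur. That case needs its own count. Fix $c,c'\in S\cap\ell$ and set $T_0=(S\cap\ell)\setminus\{c,c'\}$.
The $2^{n-k-1}$ sets $T_0\cup U$ with $U\subseteq S\setminus\ell$ need distinct realizers $y$.
Each such $y$ lies off $\ell$, since on $\ell$ it would see at least $k$ points of $S\cap\ell$. Hence it has exactly $k-1$ neighbours on $\ell$, so $N(y)\cap\ell=T_0$.
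Such a $y$ is determined by which of its $k-1$ non-parallel lines passes through which point of $T_0$, because two lines of different classes meet once and $k-1\ge 2$.
Therefore $2^{n-k-1}\le (k-1)!$, i.e. $n\le k+1+\log_2((k-1)!)\le k(k-1)$ for $k\ge 3$. If instead every line meets $S$ in at most $k$ points, Gy\'arf\'as gives $n\le k(k-1)$ directly.
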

\begin{proof}
    Let $G$ be the graph of an $\mathrm{OA}(k,m)$, and suppose that $S\subseteq V$ is a shattered set of size $n$.

    \textit{Case 1:} $S$ is not a clique. Then there exist $x,y\in S$ with $x\not\sim y$. For each $T\subseteq S\sm\{x,y\}$, there exists some $z_T\in V$ such that $N_S(z_T)=T\cup\{x,y\}$. The $z_T$ must all be distinct. Thus,
    we have
    $$2^{n-2}=|\{z_T:T\subseteq S\sm\{x,y\}\}|\le|N(x,y)|=\mu=k(k-1)$$
    and so $n\le\log_2(k(k-1))+2$. Finally, since $k\ge 3$, we have $k(k-1)\ge 4$ and so $\log_2(k(k-1))+2\le k(k-1).$

    \textit{Case 2:} $S$ is a clique. Then for any two columns $c,d$ corresponding to vertices in $S$, we may color the edge $cd$ with the unique index $i$ such that $c_i=d_i$. This yields an edge-coloring of the complete graph on $S$, using colors in $[k]$. By a result of Gy\'arf\'as \cite{gyarfas1977partition}, there is a monochromatic component $C$ of order at least $n/(k-1)$; suppose that this component is monochromatic in color $i^*$. By connectivity of $C$, we have $c_{i^*}=d_{i^*}$ for any $c,d\in C$; let $j^*$ be this common symbol in position $i^*$, such that $c_{i^*}=j^*$ for all $c\in C$. Let $T\subseteq C$ be an arbitrary subset of size $|C|-1$. Since $S$ is shattered, there exists $x\in V$ such that $N_C(x)=T$. Since there exists $c\in C$ with $x\not\sim c$, we have $x_{i^*}\ne j^*$. Thus, for all $d\in T$, we have $x_{i}=d_i$ for some $i\ne i^*$, and moreover this choice of $i$ must be distinct for all $d\in T$. Since there are $k-1$ such $i$, it follows that $n/(k-1)-1\le |C|-1=|T|\le k-1$ and so $n\le k(k-1)$.
\end{proof}

There exist $\mathrm{OA}(m/2,m)$ whose graphs have unbounded VC-dimension as $m\to\infty$ \cite{baker2008graphs}. Furthermore, the upper bound of Theorem \ref{Theorem OA upper bound} is not tight for $k=3$, in view of Theorem \ref{Theorem Latin square}. It seems difficult to provide tight lower and upper bounds for arbitrary values of $k$. We were not even able to generalize the lower bound in Theorem \ref{Theorem Latin square} to general orthogonal arrays.

\subsection{Steiner graphs}

A $2$-$(v,k,1)$ \textit{design} is a pair $(\mathcal P,\mathcal B)$ of sets (\textit{points} and \textit{blocks}), where $\mathcal B\subseteq 2^\mathcal P$, with the properties:
\begin{enumerate}
    \item $|\mathcal P|=v$,
    \item $|B|=k$ for all $B\in\mathcal B$,
    \item for any distinct $p,q\in\mathcal P$, there is a unique $B\in\mathcal B$ such that $p,q\in B$.
\end{enumerate}

Given a $2$-$(v,k,1)$ design $(\mathcal P,\mathcal B)$, we define a graph $G=(V,E)$ where $V$ is the set of $b={v\choose 2}/{k\choose 2}$ blocks, and $B\sim C$ if and only if $B\cap C\ne\emptyset$. The graph $G$ (called a \textit{Steiner graph}) is strongly regular (see e.g. \cite{brouwer2022strongly}, Section 8.5.) The expressions for the parameters are somewhat complicated, but we only need the fact that $\mu=k^2$ (here $k$ denotes the block size, not valency). We show that the Steiner graphs with fixed block size have bounded VC-dimension.

\begin{thm} \label{Theorem Steiner} Let $G$ be the Steiner graph of a $2$-$(v,k,1)$ design. Then the VC-dimension of $G$ is at most $k(k+1)$.
\end{thm}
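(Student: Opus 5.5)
The plan mirrors the proof of Theorem~\ref{Theorem OA upper bound}. Let $S$ be a shattered set of size $n$, viewed as a set of blocks, and split according to whether $S$ is a clique in $G$, i.e.\ whether the blocks of $S$ pairwise intersect.

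\emph{Case 1: $S$ is not a clique.} Pick disjoint blocks $x,y\in S$. For every $T\subseteq S\sm\{x,y\}$ there is a vertex $z_T$ with $N_S(z_T)=T\cup\{x,y\}$. These vertices are distinct and all lie in $N(x,y)$, which has size $\mu=k^2$. Hence $2^{n-2}\le k^2$, so $n\le 2\log_2 k+2$. This is at most $k(k+1)$ for every $k\ge 2$; for instance $k=2$ gives $4\le 6$, and the gap only grows with $k$.

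\emph{Case 2: $S$ is a clique.} Here I would replace the Gy\'arf\'as step of the orthogonal array proof by a direct pigeonhole argument.
\begin{enumerate}
\item Fix $B_0\in S$. Every other block of $S$ meets $B_0$ in exactly one point, and $B_0$ has $k$ points. So some point $p\in B_0$ lies in at least $(n-1)/k$ blocks of $S\sm\{B_0\}$.
\item Let $C\subseteq S$ consist of $B_0$ together with these blocks. Then every block of $C$ contains $p$, and $|C|\ge (n-1)/k+1$.
\item Pick $D\in C$ and set $T=C\sm\{D\}$. Since $S$ is shattered, there is a block $x$ with $N_C(x)=T$. As $x$ misses $D\ni p$, we get $p\notin x$.
\item Any two distinct blocks through $p$ meet only in $p$, since two points determine a unique block. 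So $x$ meets the blocks of $T$ in pairwise distinct points, all different from $p$.
\item Since $x$ has only $k$ points, $|T|\le k$. Therefore $(n-1)/k\le |C|-1\le k$, which gives $n\le k^2+1\le k(k+1)$.
\end{enumerate}

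The main point needing care is the counting in Case 2. One must include $B_0$ itself in the star $C$ (it also passes through $p$) to avoid losing an additive $1$. One must also check that the vertex witnessing $N_C(x)=T$ can be taken to be a block $x\ne D$ that is not itself in $T$; if $x\in T$ it would be adjacent to itself only under a convention the paper does not use. Since $N$ is the open neighborhood and $x\notin N(x)$, we have $x\notin T$ automatically, so $x$ is a genuinely different block.

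If $|C|=1$, the bound $n-1<k$ holds trivially. The remaining small-$k$ bookkeeping in Case 1 is routine.
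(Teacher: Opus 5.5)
Your outline follows the paper's proof step for step: Case 1 uses $\mu=k^2$, and Case 2 pigeonholes a star $C$ through a point $p$ and takes a witness for a co-singleton of $C$. Your count $|C|\ge (n-1)/k+1$ is slightly sharper than the paper's $n/k$. However, step 3 has a genuine gap, and the paper's own argument has the same one (``Since $N_C(y)\ne C$, we have $p\not\in y$'').

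The vertex $x$ with $N_C(x)=C\setminus\{D\}$ may be $D$ itself. Indeed, $D$ meets every other block of $C$ in $p$, and $D\notin N(D)$, so $N_C(D)=C\setminus\{D\}$ exactly. In that case $x$ ``misses'' $D$ only because $x=D$, and $p\in x$. Your closing paragraph notices the self-adjacency issue, but it only derives $x\notin T$. That says nothing about $x=D$, which is the one dangerous case. Switching to a co-doubleton $C\setminus\{D_1,D_2\}$ would avoid this uniformly, but with your count it only yields $n\le k^2+k+1$.

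To repair the argument, prescribe the trace on all of $S$ rather than just on $C$, and split on whether $S=C$.

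\emph{If $S\ne C$:} take $x$ with $N(x)\cap S=C\setminus\{D\}$. Because $S$ is a clique, each $u\in S$ has $N(u)\cap S=S\setminus\{u\}$. For $u\ne D$ this set contains $D$, and for $u=D$ it contains the nonempty set $S\setminus C$, which is disjoint from $C\setminus\{D\}$. Hence $x\notin S$, so $x\ne D$, $x\cap D=\emptyset$, and $p\notin x$. Your steps 4--5 then give $n\le k^2+1$.

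\emph{If $S=C$:} here all $n$ blocks pass through $p$; the case $n\le 2$ is trivial. Take $x$ with $N(x)\cap S=S\setminus\{D_1,D_2\}$. Members of $S$ see exactly $n-1$ members of $S$, so $x\notin S$. If $p\in x$, then $x$ would see all of $S$, so $p\notin x$. Thus $x$ meets $n-2$ blocks through $p$ in pairwise distinct points, and $n\le k+2\le k(k+1)$ for $k\ge 2$.

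Either way $n\le k(k+1)$, as claimed.
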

\begin{proof}
    Suppose that $S\subseteq V$ is a shattered set of size $n$. 

    \textit{Case 1:} $S$ is not a clique. Then there exist $x,y\in S$ with $x\not\sim y$. For each $T\subseteq S\sm\{x,y\}$, there exists some $z_T\in V$ such that $N_S(z_T)=T\cup\{x,y\}$. Such of the $2^{n-2}$ possible $T$ corresponds to a distinct $z_T$, and each $z_T$ is a common neighbor of $x$ and $y$. Thus, $2^{n-2}\le\mu=k^2$, so $n\le 2\log_2(k)+2$. Since $k\ge 3$, we have $k^2\ge 4$, so $4k^2\le 2^{k^2}\le 2^{k(k+1)}$ and so $2\log_2(k)+2=\log_2(4k^2)\le k(k+1)$.

    \textit{Case 2:} $S$ is a clique. Fix some $x\in S$. Then there is a point $p\in x$ such that at least $n/k$ vertices in $S$ contain $p$; thus, let $C$ be a set of at least $n/k$ vertices, each of which contains $p$. Observe that any two vertices in $C$ intersect only in $p$. Let $T\subseteq C$ be a subsets of size $|C|-1$. Since $S$ is shattered, there exists some $y\in V$ such that $N_C(y)=T$. Since $N_C(y)\ne C$, we have $p\not\in y$. Since the vertices in $C$ intersect only in $p$, $y$ can intersect at most $k$ of them, so we have $k\ge |T|=|C|-1\ge n/k-1$, so $n\le k(k+1)$.
\end{proof}

Combining Theorems \ref{Theorem OA upper bound} and \ref{Theorem Steiner} with a result of Sims (Theorem 8.6.4 in \cite{brouwer2022strongly}) yields the following.

\begin{cor} \label{Corollary -m}
    Let $m\ge 2$ be an integer; then the set of SRGs with smallest eigenvalue $-m$ has bounded VC-dimension.
\end{cor}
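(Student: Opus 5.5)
The plan is to combine the Neumaier--Sims classification with the two bounded-VC results just proved, and to handle the finitely many leftover graphs by the trivial bound $2^n\le v$ from Proposition \ref{Proposition misc bounds}(1).

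Fix $m\ge 2$. By the theorem of Sims (Theorem 8.6.4 in \cite{brouwer2022strongly}; see also Neumaier, Theorem 8.6.3), there is a constant $N(m)$ such that every primitive SRG with smallest eigenvalue $-m$ and more than $N(m)$ vertices is of one of two kinds. Either it is the graph of an orthogonal array $\mathrm{OA}(m,q)$ (a Latin-square-type graph with $k=m$ rows), or it is the block graph of a $2$-$(w,m,1)$ design, i.e.\ a Steiner graph with block size $m$. In both cases the eigenvalue $-m$ pins the parameter of the geometry: for $\mathrm{OA}(k,q)$ graphs the smallest eigenvalue is $-k$, and for Steiner graphs with block size $k$ it is $-k$. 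Hence:
\begin{itemize}
\item in the orthogonal array case, Theorem \ref{Theorem OA upper bound} bounds the VC-dimension by $m(m-1)$;
\item in the Steiner case, Theorem \ref{Theorem Steiner} bounds it by $m(m+1)$.
\end{itemize}
The remaining primitive SRGs with smallest eigenvalue $-m$ have at most $N(m)$ vertices. By Proposition \ref{Proposition misc bounds}(1), each of them has VC-dimension at most $\log_2 N(m)$.

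It remains to treat imprimitive SRGs. If $G$ is disconnected, it is a disjoint union of cliques $K_{a}$, with smallest eigenvalue $-1$, so it is excluded for $m\ge 2$. Otherwise $G$ is complete multipartite $K_{a,\dots,a}$ with smallest eigenvalue $-a$, so $a=m$. In $K_{m,\dots,m}$ the neighborhoods are complements of parts. A shattered set $S$ must meet each part in at most one vertex: if $x,y\in S$ lie in the same part, then no neighborhood contains $x$ but not $y$. Two vertices from different parts can only be jointly avoided by a neighborhood if they lie in the same part, so $S$ has size at most $2$. Thus imprimitive graphs have VC-dimension at most $2$.

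Taking the maximum of $m(m+1)$, $\log_2 N(m)$ and $2$ gives the required uniform bound.

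The main obstacle is checking the parameter correspondence in Sims' theorem, namely that smallest eigenvalue $-m$ forces exactly $m$ rows or block size $m$ and not some other function of $m$. If that correspondence were off, the bound would change only by a function of $m$, which is still enough.
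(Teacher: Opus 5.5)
Your proposal is correct and follows the paper's route. Sims' theorem leaves, up to finitely many exceptions, orthogonal array graphs with $m$ rows and Steiner graphs with block size $m$, which Theorems~\ref{Theorem OA upper bound} and~\ref{Theorem Steiner} handle; you also spell out what the paper leaves implicit, namely the finitely many exceptions (via $2^n\le v$) and the imprimitive graphs $K_{m,\dots,m}$, whose VC-dimension your argument in fact shows is at most $1$.

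One caveat is inherited from the paper: the bound $k(k-1)$ in Theorem~\ref{Theorem OA upper bound} is false for $k=2$. Lattice graphs have VC-dimension $3$, and Case~1 of that proof only gives $\log_2(k(k-1))+2=3$ there. Your final maximum with $m(m+1)$ absorbs this, so the corollary is unaffected.
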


Our next result generalizes Theorem 4.2.3 of \cite{benediktsson2021model} as applied to triangular graphs.

\begin{thm} \label{Theorem Steiner lower bound}
    Let $G$ be the Steiner graph of a $2$-$(v,k,1)$ design. If $v>\max(3k(k-1)^3,4k^2)$ then the VC-dimension of $G$ is at least 4.
\end{thm}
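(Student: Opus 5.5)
We construct an explicit configuration of four blocks, modelled on the shattered $4$-set in a triangular graph. Fix a point $p$ and two blocks $B_1,B_2$ through $p$. Then choose a point $q\notin B_1\cup B_2$ such that the block $L$ through $p$ and $q$ is not $B_1$ or $B_2$, and two blocks $B_3,B_4$ through $q$ that avoid $p$. Set $S=\{B_1,B_2,B_3,B_4\}$. By construction $B_1\sim B_2$ and $B_3\sim B_4$, and $L$ meets all four blocks, which realizes $T=S$. The other subsets $T\subseteq S$ are realized as follows.
\begin{itemize}
\item $T=\{1,2\}$ (and symmetrically $\{3,4\}$): take a block through $p$ that misses $B_3$ and $B_4$. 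There are $(v-1)/(k-1)$ blocks through $p$, and at most $2k+2$ of them are excluded, so one exists once $v>4k^2$.
\item $|T|=3$, e.g. $T=\{1,2,3\}$: take a block through $p$ and a point of $B_3\setminus\{q\}$ that misses $B_4$.
\item $|T|\le 1$: count. There are $b=\binom v2/\binom k2$ blocks, and the number meeting a given block is at most $k(v-1)/(k-1)$. Hence for large $v$ there are blocks meeting exactly one prescribed $B_i$ and no other, and blocks meeting none of the $B_i$.
\item Mixed pairs, e.g. $T=\{1,3\}$: take a block joining a point of $B_1\setminus\{p\}$ to a point of $B_3\setminus\{q\}$ that misses $B_2$ and $B_4$.
\end{itemize}

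The real work is in the three-element subsets and the mixed pairs. A naive count does not suffice there. For $T=\{1,3\}$ there are $(k-1)^2$ candidate lines joining $B_1\setminus\{p\}$ to $B_3\setminus\{q\}$, but there can be just as many lines joining $B_1$ to $B_2\setminus\{p\}$ that also hit $B_3$. So instead of fixing $B_3,B_4$ arbitrarily, choose them generically in sequence.

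First fix $p,B_1,B_2$. Then choose $q$ avoiding the set $Z$ of points lying on some block that meets both $B_1\setminus\{p\}$ and $B_2\setminus\{p\}$. Every such block is determined by a pair of points, so $|Z|\le (k-1)^2 k$. Next choose $B_3$ through $q$ avoiding a bounded bad set, and then $B_4$ through $q$ avoiding a further bad set. Each bad set consists of blocks through $q$ that pass through one of $O(k(k-1)^2)$ forbidden points. The forbidden points are those on lines determined by pairs from the already chosen blocks, for instance points on a line joining $B_1\setminus\{p\}$ to $B_2\setminus\{p\}$, or on a line joining $p$ to a point of $B_3$ that also meets $B_4$. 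The number of blocks through $q$ is $(v-1)/(k-1)$. The hypothesis $v>3k(k-1)^3$ is designed to guarantee
\[
\frac{v-1}{k-1}>3k(k-1)^2,
\]
so admissible choices of $q$, $B_3$ and $B_4$ exist.

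With this genericity in place, consider $T=\{1,3\}$ again. A line from $x\in B_1\setminus\{p\}$ to $y\in B_3\setminus\{q\}$ that met $B_2$ would put $y$ on a line through $B_1\setminus\{p\}$ and $B_2\setminus\{p\}$. We excluded such $y$ when choosing $B_3$, so this cannot happen. The only remaining danger is that the line meets $B_4$. Avoiding this requires one good pair out of the $(k-1)^2$ pairs $(x,y)$. At most $k-1$ lines through a fixed $x$ meet $B_4\setminus\{q\}$, and the choice of $B_4$ was made to avoid full coincidence, so at least one good pair remains. The triples are handled the same way: $p$ has $k-1$ lines to $B_3\setminus\{q\}$, and the choice of $B_4$ ensures that not all of them meet $B_4$.

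I expect this bookkeeping to be the main obstacle: listing precisely the forbidden point sets for each of the six mixed pairs and four triples, and checking that their total size stays below $(v-1)/(k-1)$. If the count is too tight, I would relax the configuration, for instance by letting $B_3$ pass through a point of $L$ other than $q$, to gain freedom. Finally, when $k=2$ the construction reduces to a shattered $4$-set in the triangular graph, consistent with \cite{benediktsson2021model}.
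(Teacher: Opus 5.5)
Your two-pencil configuration ($B_1,B_2$ through $p$, $B_3,B_4$ through a second point $q$, with $S$ witnessed by the block through $p$ and $q$) is genuinely different from the paper's, and it can be completed under the stated hypothesis. As written, though, the proposal stops exactly at the step you call the real work. One of the bad sets you list for $B_4$ (``a line joining $p$ to a point of $B_3$ that also meets $B_4$'') refers to $B_4$ itself, so it does not define anything to avoid.

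What makes the sequential choice work is a cardinality symmetry. For a point $x\notin B_3\cup B_4$, the blocks through $x$ meeting $B_3\setminus\{q\}$ form a family of exactly $k-1$ blocks, and so do the blocks through $x$ meeting $B_4\setminus\{q\}$. Neither family contains the block through $x$ and $q$. Hence some block through $x$ meets $B_3$ but not $B_4$ if and only if the two families differ, if and only if some block through $x$ meets $B_4$ but not $B_3$.

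So you need the following conditions.
\begin{enumerate}
\item $q\notin B_1\cup B_2\cup Z$, and $B_3,B_4$ avoid $p$ and $Z$. Since $Z\supseteq(B_1\cup B_2)\setminus\{p\}$, this makes $B_3,B_4$ disjoint from $B_1\cup B_2$. You need that so a witness such as the block through $p$ and a point of $B_3$ is not $B_1$ or $B_2$ themselves, which do not lie in their own neighbourhoods.
\item $B_4$ misses the points of three fixed blocks: those joining a fixed $y_0\in B_3\setminus\{q\}$ to $p$, to a fixed $x_0\in B_1\setminus\{p\}$, and to a fixed $x_0'\in B_2\setminus\{p\}$.
\end{enumerate}
Taking $x=p$ gives the triples $\{1,2,3\},\{1,2,4\}$. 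Taking $x=x_0,x_0'$ gives the four (not six) mixed pairs, where avoiding $B_2$, resp.\ $B_1$, follows from $(B_3\cup B_4)\cap Z=\emptyset$. Finally, $q\notin Z$ gives $\{1,3,4\},\{2,3,4\}$.

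This excludes fewer than $(k-1)^2(k-2)+5k$ blocks through $q$, which is less than $(v-1)/(k-1)$ under the hypothesis, so no relaxation is needed. For singletons, count blocks through a fixed point of $B_i$ lying on no other $B_j$; at most about $3k$ of these are bad. Comparing the total numbers of blocks meeting each $B_i$, as you suggest, does not by itself produce a block meeting exactly one of them.

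The paper sidesteps all of this by putting the four blocks through a single point $p$. It chooses $B_3$ and then $B_4$ through $p$ so that every block not containing $p$ meets at most two of $B_1,\dots,B_4$. This is where $(v-1)/(k-1)\ge 3k(k-1)^2$ is used: at most $3(k-1)^2$ blocks avoid $p$ and meet two of $B_1,B_2,B_3$, each has $k$ points, and each point determines one block through $p$. With this one property:
\begin{itemize}
\item $S\setminus\{B_i\}$ is witnessed by $B_i$ itself;
\item $S$ is witnessed by any fifth block through $p$;
\item $\{B_i,B_j\}$ is witnessed by the block joining a point of $B_i\setminus\{p\}$ to a point of $B_j\setminus\{p\}$, which misses $p$ and so meets nothing else in $S$;
\item sets of size at most one are handled by counting.
\end{itemize}
Your route needs several non-coincidence conditions and a longer case split. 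Its exclusion counts are of the same order (the leading term is even slightly smaller), but it gains nothing essential beyond exhibiting a different, non-clique shattered set. The paper's single-pencil choice is considerably shorter.
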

\begin{proof}
    Fix a point $p$ and two distinct blocks $B_1,B_2$ each containing $p$. The number of blocks containing $p$ is $(v-1)/(k-1)$, so this is possible if $v\ge 2k-1$. Let $\mathcal I_{1,2}:=\{B\in\mathcal B:B\cap(B_1\cup B_2)\ne\emptyset\text{ and }p\not\in B\}$. We have $|\mathcal I_{1,2}|\le (k-1)^2$, and so
    $$\left|\left\{B\in\mathcal B:B\cap\left(\bigcup_{C\in\mathcal I_{1,2}}C\right)\ne\emptyset\text{ and }p\in B\right\}\right|\le(k-1)^2\cdot k\cdot 1.$$
    Thus, if $(v-1)/(k-1)>k(k-1)^2$ then there exists a block $B_3$ such that the implication
    $$(p\not\in B\wedge B\cap(B_1\cup B_2)\ne\emptyset)\Longrightarrow B\cap B_3=\emptyset$$
    holds for all $B\in\mathcal B$. By similar reasoning, if $(v-1)/(k-1)\ge 3k(k-1)^2$, then there exists a block $B_4$ such that, if $p\not\in B$ and $B$ intersects at least two of $B_1,B_2,B_3$, then $B\cap B_4=\emptyset$, for all $B\in\mathcal B$. Now $B_1,B_2,B_3,B_4$ satisfy that

    \begin{equation} \label{Equation Steiner construction}
    \text{if }B\in\mathcal B\text{ and }p\not\in B,\text{ then }B\text{ intersects exactly }0,1,\text{ or }2\text{ of the sets } B_1,B_2,B_3,B_4.
    \end{equation}

    Now we show that $S=\{B_1,B_2,B_3,B_4\}$ is a shattered set. The number of sets intersecting at least one block in $S$ is at most $4\cdot k\cdot (v-1)/(k-1)$. If $v>4k^2$, then we obtain $b>4k(v-1)/(k-1)$, so $N(\overline{B_1},\overline{B_2},\overline{B_3},\overline{B_4})\ne\emptyset$. Now the number of blocks intersecting both $B_1$ and one of $B_2,B_3,B_4$ which do not contain $p$ is $(k-1)\cdot 3(k-1)=3(k-1)^2$. If $v>3(k-1)^3$, then $(v-1)/(k-1)>3(k-1)^2$, and we obtain $N(B_1,\overline{B_2},\overline{B_3},\overline{B_4})$. Similarly we find blocks which intersect only $B_2$, only $B_3$, and only $B_4$. For distinct $B_i$ and $B_j$, choosing $q_i\in B_i\sm\{p\}$ and $q_j\in B_j\sm\{p\}$, we find a block $B$ which contain $q_i$ and $q_j$. Note that $p\not\in B$. By (\ref{Equation Steiner construction}), $B\in N(B_i,B_j,\overline{B_\ell},\overline{B_m})$, where $\{i,j,\ell,m\}=\{1,2,3,4\}$. Note that $B_1\in N(\overline{B_1},B_2,B_3,B_4)$ and similarly for the other 3-subsets of $S$. Finally, if $v>4k-3$, then $(v-1)/(k-1)>4$, so there exists $B\in\mathcal B$ with $p\in B$ and $B\not\in S$; then $B\in N(B_1,B_2,B_3,B_4)$.
\end{proof}

\subsection{Small SRGs} \label{Section Small SRGs}
By a computer search, we determined the VC-dimension of every SRG of order at most 28. The results are recorded in the table below. The column ``Number" refers to the number of non-isomorphic SRGs with the given parameters. We have not found two SRGs with the same parameters and different VC-dimensions, although there is no reason to believe that such a pair does not exist.

\[ \begin{array}{c|c|c}
\text{Parameters} & \text{Number} & \text{VC-dim} \\ \hline{}
    (5,2,0,1) & 1 & 2  \\
    (9,4,1,2) & 1 & 2  \\
    (10,3,0,1) & 1 & 2  \\
    (10,6,3,4) & 1 & 2  \\
    (13, 6, 2, 3) & 1 & 3  \\
    (15,6,1,3) & 1 & 3  \\
    (15,8,4,4) & 1 & 3 \\
    (16,5,0,2) & 1 & 3  \\
    (16,10,6,6) & 1 & 3  \\
    (16,6,2,2) & 2 & 3 \text{ (all)}  \\
    (16,9,4,6) & 2 & 3 \text{ (all)}  \\
    (17,8,3,4) & 1 & 3  \\
    (21,10,3,6) & 1 & 3  \\
    (21,10,5,4) & 1 & 4  \\
    (25,8,3,2) & 1 & 3  \\
    (25,16,9,12) & 1 & 3  \\
    (25,12,5,6) & 15 & 4 \text{ (all)} \\
    (26,10,3,4) & 10 & 3 \text{ (all)} \\
    (26,15,8,9) & 10 & 4 \text{ (all)} \\
    (27,10,1,5) & 1 & 4 \\
    (27,16,10,8) & 1 & 4 \\
    (28,12,6,4) & 4 & 4 \text{ (all)} \\
    (28,15,6,10) & 4 & 3 \text{ (all)} \\
\end{array} \]

\section{Conclusion}

Our results on $n$-existential closure give a relatively complete understanding of the function $m(v,n)$. Indeed, for $n\ge 3$, the value of $m(v,n)$ is closely related to the minimum dimensions of an $n$-strong binary covering array, and the proofs of the best bounds for the two problems are basically analogous. For $n=2$, the value of $m(v,n)$ is determined up to an additive constant, although it may be interesting to investigate the structure of the extremal graphs. Future work may consider the optimal values of other graph parameters for $n-$e.c. graphs, such as minimum degree or eigenvalues. It would be interesting to see whether novel combinatorial structures such as the graph $D_\ell$ also arise in the context of these problems.

Our results on VC-dimension, by contrast, leave many natural questions wide open. The proof of Theorem \ref{Theorem VC=2} suggests that even very weak lower bounds will be difficult to obtain for general SRGs. We find it interesting that the family of graphs characterized by Theorem \ref{Theorem VC=2} is not known to be finite, yet contains only 3 known examples. The connection between VC-dimension and this fundamental open problem about SRGs may be essentially coincidental. Also intriguing is the usefulness of Ramsey-theoretic results in proving Theorems \ref{Theorem VC=2} and \ref{Theorem Steiner}, fitting into the broader interplay between graph Ramsey theory and VC-dimension. Corollary \ref{Corollary -m} is our most consequential result in terms of applications, in view of all the known results assuming bounded VC-dimension. We did not prove any substantial results on conference graphs, and we did not attempt to apply eigenvalue methods to study VC-dimension, although they have proved extremely important in the theory of SRGs. These may be interesting directions for future work.

Several natural open problems bear mentioning. We no not expect the following problem to be difficult, but we do not know the minimum order of two such graphs.

\begin{prob}
    Find two strongly regular graphs with the same parameters and different VC-dimensions.
\end{prob}

Solutions to the following problems would probably be more difficult to obtain, and would substantially improve on our results. Firstly, we ask whether a version of Theorem \ref{Theorem VC=2} can be proved with no restriction on $v$.

\begin{prob}
    Characterize all strongly regular graphs with VC-dimension exactly 2.
\end{prob}

Secondly, we ask for the optimal constants in our results on orthogonal arrays and Steiner systems.

\begin{prob}
    For fixed $k$ and large enough $m$, determine the minimum and maximum possible VC-dimension of the graph of a $\mathrm{OA}(k,m)$.
\end{prob}

\begin{prob}
    For fixed $k$ and large enough $v$, determine the minimum and maximum possible VC-dimension of the Steiner graph of a 2-$(v,k,1)$ design.
\end{prob}

\section{Acknowledgments}
The authors would like to thank Michael Tait for helpful discussion of the paper \cite{pham2025vc}.

\section{Statement on AI usage}

We used GPT-5.5 to assist with literature review and search for known bounds on the function $m(v,n)$. GPT-5.5 pointed out the following facts which were useful in this paper: (1) Lemma \ref{Lemma minimum degree}, and (2) if $\mathcal F$ is a family of $t$-qualitatively independent sets and $t\ge 3$, then $A\cap B\ne A\cap C$ for any $A,B,C\in\mathcal F$. All other results presented in this paper were discovered, proved, and written up solely by the authors.

\printbibliography

\end{document}